%
%
%
%
%
%
\RequirePackage{fix-cm}
\documentclass[smallextended,envcountsame,numbook]{svjour3}      
\smartqed  
%


  \textheight=8.6 true in
   \textwidth=4.9 true in
     \topmargin 22pt







\usepackage{graphicx}
%
%
%
%
%



 \usepackage{hyperref}
 \hypersetup{
     colorlinks=true,
     linkcolor=blue,
     filecolor=magenta,      
     urlcolor=cyan,
     citecolor=red,
}
\usepackage{amsfonts, amssymb, amsbsy, amsmath}
 \usepackage{blkarray}
 \usepackage{graphicx}
 \usepackage{float}
 \usepackage{epstopdf}
 \usepackage{algorithmic}
 \ifpdf
   \DeclareGraphicsExtensions{.eps,.pdf,.png,.jpg}
 \else
   \DeclareGraphicsExtensions{.eps}
 \fi
 \usepackage{mathrsfs}
 \usepackage{amscd,empheq} 
 \usepackage{yfonts,bm}

 \usepackage[pdf]{graphviz}

 \usepackage[all,2cell]{xy}
 \usepackage{pb-diagram}
 \usepackage{pb-xy}

 \usepackage{tikz}
 \usepackage{tikz-cd}
 \usetikzlibrary{patterns}
 \usetikzlibrary{positioning}
 \usetikzlibrary{shapes,decorations.markings,arrows.meta}
 \usetikzlibrary{matrix,arrows}
 \tikzset{hfit/.style={rounded rectangle, inner xsep=0pt},
           vfit/.style={rounded corners}}

 \usepackage{morewrites}
 \usepackage{scalerel}
 \usepackage{mathabx}
 \usepackage{ascii}
 \usepackage{palatino}
 
\usepackage{enumitem}
\setlist[enumerate]{leftmargin=.5in}
\setlist[itemize]{leftmargin=.5in}

 



\usepackage{amsopn}






\newcommand{\dff}{{\mathrm d}}

\newcommand{\sCo}{{\mathsf{Co}}}

\newcommand{\spi}{\mathsf{p}}

\newcommand{\IIonei}{\bfQ_{\mathbf 1}}
\newcommand{\IIi}{\bfQ_{\mathbf 2}}

\newcommand{\IIni}{\bfQ_{\mathbf n}}
\newcommand{\IIthreei}{\bfQ_{\mathbf 3}}

\newcommand{\smin}{\smallsetminus}







\renewcommand{\subset}{\subseteq}

\newcommand{\grade}{\mathrm{grd}}
\newcommand{\flt}{\mathrm{flt}}

\definecolor{Blue}{RGB}{0,162,255}
\definecolor{Orange}{RGB}{243,144,25}
\definecolor{Red}{RGB}{236,93,87}





\newcommand{\F}{{\mathbb{K}}}

\newcommand{\Z}{{\mathbb{Z}}}






\newcommand{\sE}{{\mathsf E}}
\newcommand{\sH}{{\mathsf H}}

\newcommand{\sO}{{\mathsf O}}

\newcommand{\sP}{{\mathsf P}}
\newcommand{\sQ}{{\mathsf Q}}

\newcommand{\sh}{{\mathsf h}}
\newcommand{\sk}{{\mathsf k}}

\renewcommand{\imath}{{\mathsf i}}
\renewcommand{\jmath}{{\mathsf j}}









\newcommand{\id}{\mathrm{id}}





%
%



\DeclareMathOperator{\img}{im}

\DeclareMathOperator{\rank}{rank}





\newcommand{\image}{\mathrm{im~}}


\newcommand{\Sub}{\mathsf{Sub}}
\newcommand{\mPi}{\mathrm{\Pi}}


\newcommand{\sRmod}{\mathbf{Ab}}

\newcommand{\sAb}{\mathbf{Ab}}
\newcommand{\sAbd}{\mathbf{Ab}_\dff}
%
%

%
%
%
%



\newcommand{\bfE}{\text{\bf E}}

\newcommand{\bfB}{\text{\bf B}}

\newcommand{\bfQ}{\text{\bf Q}}



%
{\begin{snugshade}\begin{quote}}
{\hfill\end{quote}\end{snugshade}}

\definecolor{shadecolor}{rgb}{0.8,0.8,0.8}

\begin{document}
\begin{sloppypar}

\title{Graded differential groups, Cartan-Eilenberg systems\\
 and conjectures in Conley index theory
}

\titlerunning{Graded differential groups \& Cartan-Eilenberg systems}        

\author{Kelly Spendlove        \and
        Robert Vandervorst 
}


\institute{K. Spendlove \at
              Mathematical Institute \\
              University of Oxford\\
              Oxford UK\\
              \email{spendlove@maths.ox.ac.uk}           
           \and
           R.C.A.M. Vandervorst \at
              Department of Mathematics\\
              VU University Amsterdam\\
              The Netherlands\\
              \email{r.c.a.m.vander.vorst@vu.nl}
}

\date{Version date: \today}

\maketitle

\begin{abstract}
Cartan-Eilenberg systems play an prominent role in the homological algebra of filtered and graded differential groups and (co)chain complexes in particular.
We define the concept of Cartan-Eilenberg systems of abelian groups
over a poset. Our main result states that a  filtered chain isomorphism between free, $\sP$-graded differential groups is equivalent to an isomorphism between associated Cartan-Eilenberg systems. An application of this result to the theory of dynamical systems addresses two open conjectures posed by J. Robbin and D. Salamon regarding  uniqueness type questions for connection matrices, as detailed in \cite{robbin:salamon2} and  \cite{atm}.
The main result of this paper also proves that the connection matrix theories in \cite{fran},
\cite{robbin:salamon2} and \cite{hms} are equivalent in the setting of vector spaces, as well as uniqueness of connection matrices for Morse-Smale gradient systems, cf.\ \cite{Reineck}.

\keywords{Graded differential group \and Cartan-Eilenberg system  \and  Chain isomorphism \and Connection matrix. }
\subclass{55Uxx \and 06D05 \and 37B25  \and 37B35}
\end{abstract}


\section{Introduction}
\label{intronew}
In the realm of algebraic topology and homological algebra, differential groups graded by a poset $\sP$ play a crucial role 
in the theory of spectral sequences and spectral systems
and their applications in various fields, in particular dynamical systems. This paper studies the intricate relationship between  $\sP$-graded differential groups and Cartan-Eilenberg systems, cf.\ \cite{CE}.
Our investigation revisits the traditional concept of Cartan-Eilenberg systems 
by considering
 abelian groups over an arbitrary poset.
In particular, an application of our main result to  dynamical systems theory 
confirms
two open conjectures posited by J. Robbin and D. Salamon, referenced in \cite{robbin:salamon2} and \cite{atm}, which concern the uniqueness of connection matrices—a pivotal concept in the analysis of the global behavior of dynamical systems. 

In order to state the main results in this paper we start with a brief, intuitive definition of Cartan-Eilenberg systems over a lattice. Let $\sP$ be a finite poset and let $\sO(\sP)$ be the lattice of down-sets in $\sP$.
A Cartan-Eilenberg system $\bfE$ over $\sO(\sP)$ consists of a functor $\sE$ from the arrow category of $\sO(\sP)$, which consists of ordered pairs $(\alpha,\beta)$, with $\alpha\subset\beta$, to the category of abelian groups and an additional structure that accounts for exact triangles in homology, cf.\ Sect.\ \ref{catdefn} for a detailed definition.  
If we unpack the above informal definition we obtain the application
$(\alpha,\beta) \mapsto \sE(\alpha,\beta)=E^\beta_\alpha$.  
The functor $\sE$ yields the homomorphisms    $\ell\colon E^\beta_\alpha \to E^{\beta'}_{\alpha'}$ for all $(\alpha,\beta)\le (\alpha',\beta')$,\footnote{The order on order pairs is point wise, i.e. $(\alpha,\beta)\le (\alpha',\beta')$ if and only $\alpha\subset\alpha'$ and $\beta\subset\beta'$.} and
the composition $E^\beta_\alpha \xrightarrow[]{\ell} E^{\beta'}_{\alpha'}\xrightarrow[]{\ell} E^{\beta''}_{\alpha''}$ is given by $\ell\colon E^\beta_\alpha \to E^{\beta''}_{\alpha''}$ due to the transitivity in $\sO(\sP)$. Moreover there exist connecting homomorphisms $k\colon E^\gamma_\beta\to E^\beta_\alpha$ and
$k\colon E^{\gamma'}_{\beta'}\to E^{\beta'}_{\alpha'}$ such that the diagrams\footnote{In the special cases $(\alpha,\beta)\le (\alpha,\gamma)$ and $(\alpha,\gamma)\le (\beta,\gamma)$ the morphisms $\ell$ are denoted by $i$ and $j$ respectively.} 
\begin{equation}
\label{unpackeddiag12}
\begin{tikzcd}[column sep=small]
E^\beta_\alpha \arrow[rr, "i"] &                   & E^\gamma_\alpha \arrow[ld, "j"] \\
                  & E^\gamma_\beta \arrow[lu, "k"] &                  
\end{tikzcd}
\qquad\qquad
\begin{tikzcd}
E^\gamma_\beta \arrow[r, "k"] \arrow[d, "\ell"'] & E^\beta_\alpha \arrow[d, "\ell"] \\
E^{\gamma'}_{\beta'} \arrow[r, "k"]                 & E^{\beta'}_{\alpha'}               
\end{tikzcd}
\end{equation}
are exact and commutative respectively for all pairs of ordered triples $(\alpha,\beta,\gamma)\le (\alpha',\beta',\gamma')$.\footnote{An ordered triple $(\alpha,\beta,\gamma)$ is defined by $\alpha\subset\beta\subset \gamma$. The order on ordered triples is again point wise.}
By construction $\ell\colon E^\beta_\alpha \xrightarrow[\id]{\ell} E^\beta_\alpha$ is the identity homomorphism and the exactness of \eqref{unpackeddiag12}[left] shows that $E^\alpha_\alpha =0$ for all $\alpha \in \sO(\sP)$, cf.\ \cite{HelleRognes}.
A Cartan-Eilenberg system is \emph{excisive} if $E_\alpha^\beta\cong E_{\alpha'}^{\beta'}$ for all $\beta\smin\alpha=\beta'\smin\alpha'$, cf.\ Lem.\ \ref{execiso}.
A morphism between Cartan-Eilenberg systems $\bfE$ and $\bfE'$ is given by homomorphisms  $h^{\beta}_{\alpha}\colon E^\beta_\alpha \to E'^\beta_\alpha$ which 
satisfy the commutative diagram:
\begin{equation}
\label{dia:exacttriangles}
    \begin{tikzcd}[row sep=large, column sep=large]
{} \arrow[r] & E_\alpha^\beta \arrow[r, "i"] \arrow[d, "h^\beta_\alpha"] & E^\gamma_\alpha \arrow[r, "j"] \arrow[d, "h^{\gamma}_{\alpha}"] & E^\gamma_\beta \arrow[r, "k"] \arrow[d, "h^{\gamma}_{\beta}"] & E_\alpha^\beta \arrow[r] \arrow[d, "h^{\beta}_{\alpha}"] & {} \\
{} \arrow[r] & E'^\beta_\alpha \arrow[r, "i'"]                           & E'^\gamma_\alpha \arrow[r, "j'"]                                & E'^\gamma_\beta \arrow[r, "k'"]                               & E'^\beta_\alpha \arrow[r]                                & {}
\end{tikzcd}
\end{equation}
 any $\alpha\subset\beta\subset\gamma$. In this case we write $h\colon \bfE\to \bfE'$ as a morphism of Cartan-Eilenberg systems. 

A free, $\sP$-graded differential group is a free abelian group $C$ with an endomorphism $\dff_C$, with $\dff_C^2=0$, and a $\sP$-grading which yields  a decomposition
\begin{equation}
    \label{Pgrdecomp}
    C = \bigoplus_{p\in \sP} G_pC,
\end{equation}
where $G_pC\subset C$ are free subgroups. 
A free, $\sP$-graded differential group is strict if the restrictions $\dff\colon G_pC\to G_pC$ are trivial for all $p\in \sP$.
A homomorphism of differential groups is called a chain homomorphism. A chain homomorphism is $\sO(\sP)$-filtered if it preserves the filterings.
Precise definitions are provided in Section \ref{diffgroup} and \ref{precdiffgr}. 
As detailed in Section \ref{CEfordffgr}, a $\sP$-graded differential group $(C,\dff)$ gives rise to a Cartan-Eilenberg system $\bfE(C,\dff_C)$ with
$E$-terms $E_\alpha^\beta=H(G_{\beta\smin\alpha}C)$, where $(G_{\beta\smin\alpha}C,\dff_C)$ is a well-defined differential subgroup 
and $E_\alpha^\beta$ is its homology.
The main result of this paper can be phrased as follows:
\vspace{0.2cm}

\noindent {\bf Theorem A} (cf.\ Thm.\ \ref{thm:equivalence})
{\em Two 
free, finitely generated, strict, $\sP$-graded differential groups 
are $\sO(\sP)$-filtered chain isomorphic
if and only if 
there respective Cartan-Eilenberg systems are isomorphic.
}

\vspace{0.2cm}

An important application of  Theorem A is to the theory of connection matrices developed by R. Franzosa, who introduces
connection matrices in the algebraic topological theory of Morse representations for dynamical systems, cf.\ \cite{fran} and Sect.\ \ref{franmodbr}.
Connection matrices contain information about the connection orbit structure between the Morse sets in a Morse representation and generate the central algebraic structure referred to as a \emph{module braid} in \cite{fran2,fran}, which serves  as an invariant for the Morse representation in question.
In Section \ref{discussion} we prove that Franzosa's module braids in the setting of Morse representations are examples of Cartan-Eilenberg systems. The main result in \cite{fran} states that these module braids are always generated, under some mild conditions, by an appropriately constructed free, graded differential group --- a \emph{connection matrix}.
Theorem A shows that two connection matrices assigned to a Morse representation are conjugate via filtered chain isomorphisms  and thus unique up to filtered conjugacy.
A first partial result in this direction was obtained in \cite{atm} and \cite{hms}. Theorem A  settles the two central conjectures  formulated in \cite{robbin:salamon2}
concerning the similarity of connection matrices, and completes the result in \cite{atm}. 
In Section \ref{discussion} we give an extensive account of these conjectures and their proofs.
Another consequence of Theorem A  concerns  connection matrix theories in the setting of vector spaces. \vspace{0.2cm}

\noindent {\bf Theorem B} (cf.\ Thm.\ \ref{equivKandF})
{\em
    Let $(D,\dff_D)$ be an  $\sO(\sP)$-filtered differential vector space.
    Then, every strict, $\sP$-graded differential vector space $(A,\dff_A)$ 
    whose Cartan-Eilenberg systems is isomorphic to $\bfE(D,\dff_D)$,
        is $\sO(\sP)$-filtered chain equivalent to $(D,\dff_D)$.
    }

\vspace{0.2cm}

As a consequence of Theorem B the connection matrix theories of \cite{fran}, \cite{robbin:salamon2} and \cite{hms} in the setting of finite dimensional differential vector spaces are equivalent. Without the restriction of finite dimensionality the notions in \cite{robbin:salamon2} and \cite{fran} are equivalent.

Another application of Theorem A (Thm.\ \ref{thm:equivalence}) involves the uniqueness of connection matrices for Morse-Smale gradient flows. A $\sP$-graded chain complex $(C,\dff_C)$ of $\Z_2$-vector spaces is Morse-Smale graded if there exists an order-preserving  map $\mu\colon \sP\to \Z$ such that
$\mu(p)=\mu(q)$, $p\neq q$, then $p~\Vert~q$, and $G_pC_k=\Z_2$.
The pair $(\sP,\mu)$ is called a Morse-Smale grading.
For such chain complexes we have the following uniqueness result.
\vspace{0.2cm}

\noindent {\bf Theorem C} (cf.\ Thm.\ \ref{mainuniquethm})
{\em
    Let $(C,\dff_C)$ and $(C,\dff'_C)$ be   algebraic Morse-Smale graded chain complexes with Morse-Smale grading $(\sP,\mu)$ for both chain complexes.
    Then,  $\dff_C=\dff'_C$ if and only if $\bfE(C,\dff_C)\cong \bfE(C,\dff'_C)$.
    }
\vspace{0.2cm}    

Theorem C applies to Morse-Smale gradient flows and provides an algebraic proof of a result by J. Reineck, cf.\ \cite{Reineck}, that connection matrices for such systems are unique, cf.\ Sect.\ \ref{MSunique}.

\begin{remark}
    \label{aboutmodules}
    In this paper we choose to present the results in the category $\sAb$ of abelian groups. The  results remain valid if we consider $R$-modules where the ring $R$ is a principal ideal domain. 
    In partucular, the theory also applies to vector spaces when $R$ is taken to be a field $\F$.
\end{remark}


\section{Graded and filtered differential groups}
\label{diffgroup}
An \emph{ordered decomposition} of an abelian group $C$ 
is a finite poset $\mPi=(\mPi,\le)$ consisting of non-trivial projections $\pi\colon C\to C$ such that
\begin{enumerate}
    \item [(i)] $\pi \circ\pi'=0$ for all distinct pairs $\pi,\pi'\in \mPi$;
    \item [(ii)] $\sum_{\pi\in \mPi} \pi =\id$.
\end{enumerate}
Let $(\sP,\le)$ be a finite poset and let $\sO(\sP)$ be the bounded, distributive lattice of down-sets in  $\sP$.
A \emph{$\sP$-grading} on $C$ 
is an order-embedding 
\[
\grade\colon \mPi\hookrightarrow \sP,
\]
and 
the pair $(C,\grade)$ is called a \emph{$\sP$-graded abelian group}.
A (finite) filtering on an abelian group $C$ is a homomorphism
\[
\flt\colon \sO(\sP) \to \Sub~C,
\]
where $\Sub~C$ is the lattice of subgroups of $C$.
The image 
of $\flt$ is a sublattice consisting of subgroups $F_\alpha C\subset C$ ordered by inclusion.
Note that $\alpha\mapsto F_\alpha C$ being a lattice homomorphism implies that $F_{\alpha\cup\beta} C = F_\alpha C + F_\beta C$ and $F_{\alpha\cap\beta} C = F_\alpha C \cap F_\beta C$.
The pair $(C,\flt)$ is called an \emph{$\sO(\sP)$-filtered group}.
A graded abelian group yields a filtering $\flt\colon \sO(\sP) \to \Sub~C$ of $C$
given by $\alpha \mapsto F_\alpha C:=\bigoplus_{\pi\in \grade^{-1}(\alpha)} \pi C$.
We use the convention that the empty sum is $0$. 
The homomorphism $\flt$ extends to a meet-semilattice homomorphism 
$\beta\smin\alpha\mapsto G_{\beta\smin\alpha} C:=\bigoplus_{\pi\in \grade^{-1}(\beta\smin\alpha)} \pi C$. In particular, this assigns the `atoms' $\{p\}\mapsto G_pC$ and associated decomposition:
$
G_{\beta\smin\alpha}C = \bigoplus_{p\in \beta\smin\alpha} G_pC,
$
with $G_pC=\pi C$, if $\grade^{-1}(\{p\}) =\{\pi\}$ and 
$G_pC=0$ if $\grade^{-1}(\{p\}) =\varnothing$.
We refer to the subgroups $G_p C$ as the subgroups of {homogeneous elements of degree} $p$.
Since $\grade$ need not be onto, some of the subgroups $G_pC$ may be the trivial subgroup $0$ for some $p$ and therefore the subgroups $G_pC$    coincide with the components $\img\pi$ whenever $G_pC$ is non-trivial. It is customary to denote the $\sP$-graded abelian group by its decomposition $C=\bigoplus_{p\in \sP} G_p C$.
A $\sP$-graded abelian group $(C,\grade)$, with $C=\bigoplus_{p\in \sP} G_pC$, 
is \emph{free abelian} if and only if the components $G_pC$ are free abelian.
Conversely, a filtered abelian group also yields a $\sP$-graded abelian group $A=\bigoplus_{p\in \sP} G_pA$, by defining $G_pA \cong F_\beta C/F_\alpha C$, $\beta\smin \alpha=\{p\}$. In general $A$ is not isomorphic to $C$. 
If the groups $G_pA$ are free there exist subspaces $G_pC\subset C$,  with $C_pC\cong G_pA$, such that $F_\alpha C=\bigoplus_{p\in \alpha}G_pC$, cf.\ \cite{robbin:salamon2}.

A homomorphism $f\colon C\to  C'$ of $\sP$-graded abelian groups that satisfies the property $f\bigl(G_pC\bigr) \subset G_q C'$ is called \emph{homogeneous}, or \emph{$\sP$-graded}. A homomorphism $f\colon C\to C'$ of $\sP$-graded, or $\sO(\sP)$-filtered groups  that satisfies $f\bigl(F_\alpha C\bigr)\subset F_\alpha C'$ is called \emph{$\sO(\sP)$-filtered}. 
Note that homomorphisms of graded groups may be filtered when the graded groups are regarded as filtered groups,
with $F_\alpha C=\bigoplus_{p\in \alpha} G_p C$ being the filtering induced by the $\sP$-grading on $C$. 
A homomorphism $f\colon C\to C'$ is an $\sO(\sP)$-filtered isomorphism if
\begin{equation}
    \label{filthomcond}
    f(F_\alpha C) = F_\alpha C',\quad \forall \alpha\in \sO(\sP).
\end{equation}
If the order structure of the grading, or filtering is clear from context we refer to (homogeneous) graded and filtered homomorphisms.
\begin{remark}
    \label{inclusionprop}
    An isomorphism that preserves the filterings 
is not necessarily an filtered isomorphism in the sense of Condition
 \eqref{filthomcond} as the following example shows. Let $C=\Z$ and $C'=\Z$ and $f\colon C\to C'$ is the identity homomorphism.
Suppose $C$ is filtered as $0\subset 4\Z \subset \Z$ and $C'$ is filtered as
$0\subset 2\Z\subset \Z$. Clearly $f$ is an   isomorphism
but $f(4\Z) \subsetneq 2\Z$.
If both  maps $f$ and $f^{-1}$ are $\sO(\sP)$-filtered, then
$F_\alpha C = f^{-1} f(F_\alpha C) \subset f^{-1}(F_\alpha C') \subset F_\alpha C$ which yields \eqref{filthomcond}. Thus filtered isomorphism is an isomorphism for which $f$ and $f^{-1}$ are filtered.
\end{remark}

\subsection{Differential groups}
\label{precdiffgr}
A \emph{differential group} $(C,\dff_C)$ consists of an abelian group $C$  and endomorphism $\dff_C\colon C\to C$  with $\dff_C^2=0$, cf.\ \cite[II.1]{MacLane}. The elements $c\in C$ are referred to as \emph{chains} and the endomorphism $\dff_C$ is referred to as the \emph{differential}. 
We fix the following notation: 
$Z(C) := \ker \dff_C$ are  the \emph{cycles} in $C$ and $B(C) := \image \dff_C$ are the \emph{boundaries} in $C$. The \emph{homology} of $(C,\dff_C)$ is defined as $H(C) := Z(C)/B(C)$.
A homomorphism $f\colon (C,\dff_C) \to (A,\dff_A)$ of differential groups is a homomorphism which
satisfies $\dff_A  f = f \dff_C$ and is called a 
\emph{chain homomorphism}, or \emph{chain isomorphism} if $f$ is an isomorphism of abelian groups.
Differential groups and chain homomorphisms form the category $\sAbd$. Homology for a differential group defines a covariant functor $\sH\colon \sAbd \to \sAb$
given by $(C,\dff_C) \mapsto H(C)$, called the \emph{homology functor}. 
The homology may be regarded as a differential group with $\dff=0$.
Next we consider grading and filtering in combination with the differental $\dff_C$.
\begin{definition}
    \label{Pgradedgr}
A  triple $\bigl(C,\dff_C,\grade\bigr)$ is called a {\em $\sP$-graded differential group}
if
\begin{enumerate}
    \item[(i)] $(C,\dff_C)$ is a differential group;
    \item[(ii)] $(C,\grade)$ is a $\sP$-graded  abelian group with grading $\grade\colon \mPi\hookrightarrow \sP$;
    \item[(iii)]  $\dff_C$ is $\sO(\sP)$-filtered.
 \end{enumerate}
A $\sP$-graded differential group is \emph{strict} if 
\begin{enumerate}
    \item [(iv)] $\dff_C F_\alpha C \subset F_{\alpha\smin \{p\}} C$, for all $p$ maximal in $\alpha$.
\end{enumerate}
\end{definition} 
A chain homomorphism, or chain isomorphism between graded or filtered differential groups preserving the filtering is called an \emph{$\sO(\sP)$-filtered chain homomorphism}, or \emph{$\sO(\sP)$-filtered chain isomorphism}.
 The condition on $\dff_C$ to be $\sO(\sP)$-filtered implies that the restriction
    $\dff_C\colon G_qC\to G_pC$ is well-defined. The latter is non-trivial only if $p\le q$. If the differential is strict then the latter inequality is strict, i.e. $p<q$. For any convex set $\beta\smin\alpha$ the pair $(G_{\beta\smin\alpha}C,\dff_C)$ is a differential subgroup.
More generally a triple $(C,\dff_C,\flt)$ is called an \emph{$\sO(\sP)$-filtered differential group} if
$(C,\flt)$ is  $\sO(\sP)$-filtered and the differential satisfies $\dff_C F_\alpha C \subset F_\alpha C$ for all $\alpha\in \sO(\sP)$.
The $\sO(\sP)$-filtering induced by a $\sP$-graded group automatically
satisfies the conditions of a $\sO(\sP)$-filtered differential group
via $F_\alpha C := \bigoplus_{p\in \alpha} G_pC$.
As before the converse is not true in general. 

\begin{example}
\label{chaincomplexex1}
A chain complex $(C,\dff_C)$ is an example of a graded differential group, traditionally denoted by $C=\bigoplus_k C_k$, where the differential $\dff_C$ is homogeneous of degree $-1$, and thus filtered. This is equivalent to
$\dff_C=\bigoplus_k \dff_k$, with  $\dff_k\colon C_k\to C_{k- 1}$ and 
$\dff_k\dff_{k+ 1} =0$.
The chain maps are taken be to  homogeneous, i.e. $f\colon C\to C'$, $f=\bigoplus_k f_k$, with $f_k\colon C_k\to C'_k$ and $\dff'_k f_k = f_{k-1}\dff_k$, for all $k$.
The same holds for cochain complexes.
\end{example}

\begin{remark}
    In the notation for graded and filtered differential groups we will omit $\grade$ and $\flt$ from the notation if there is no ambiguity about the grading or filtering, and simply write $(C,\dff_C)$.
\end{remark}
 \subsection{Cartan-Eilenberg systems} 
\label{CEfordffgr}
In Section \ref{intronew} a basic definition of  Cartan-Eilenberg systems  in the category of abelian groups is given.
Filtered differential groups are a prime source for  generating
excisive Cartan-Eilenberg systems. 
Let $(C,\dff_C)$ be a filtered differential group.
Since the differential $\dff_C$ preserves the filtering we have 
 the short exact sequence:
\begin{equation}
    \label{firstexactseq13}
\begin{tikzcd}
0 \arrow[r] & \displaystyle{\frac{F_\beta C}{F_\alpha C}} \arrow[r, "i"] & \displaystyle{\frac{F_\gamma C}{F_\alpha C}} \arrow[r, "j"] & \displaystyle{\frac{F_\gamma C}{F_\beta C}} \arrow[r] & 0
\end{tikzcd},\quad \alpha\subset \beta\subset\gamma.
\end{equation}
The associated homology is given by
  $E^\beta_\alpha := H(F_\beta C/F_\alpha C)$, $E^\gamma_\alpha := H(F_\gamma C/F_\beta C)$ and
 $E^\gamma_\beta := H\bigl(F_\gamma C/F_\beta C\bigr)$. 
 This yields the exact triangles in \eqref{unpackeddiag12} where 
 the connecting homomorphism  $k\colon H\bigl(F_\beta C/F_\alpha C\bigr) \to H(F_\alpha C)$ 
 is constructed in the usual way. 
 All other axioms are readily verified 
 which yields a Cartan-Eilenberg system denoted
 $\bfE(C,\dff_C)$. 
 The excisive property follows from the fact that 
$F_{\alpha\cup\beta}C/F_\alpha C =(F_\alpha C+F_\beta C)/F_\alpha C
\cong F_\beta C/(F_\alpha C \cap F_\beta C) = F_\beta C/F_{\alpha\cap \beta}C$, cf.\ Lem.\ \ref{execiso}. 
The (excisive) Cartan-Eilenberg system $\bfE(C,\dff_C)$ is the Cartan-Eilenberg system of the $\sO(\sP)$-filtered differential group $(C,\dff_C)$.
Since graded differential groups are filtered differential groups
via $F_\alpha C := \bigoplus_{p\in \alpha} G_pC$, we define the Cartan-Eilenberg system of a $\sP$-graded differential group as outlined above. The induced Cartan-Eilenberg system is again denoted by $\bfE(C,\dff_C)$.

\begin{remark}
    \label{homequal}
    For finitely generated, strict $\sP$-graded differential groups the homology groups satisfy $H(G_pC,\dff)= G_pC\cong E^\beta_\alpha$, with $\beta\smin\alpha=\{p\}$ for all $p\in \sP$.
\end{remark}

\section{Induced chain homomorphisms}
\label{inducedch}
In this section we prove two results, Theorems \ref{lem:main} and \ref{lem:main22}, concerning chain homomorphisms that lift homomorphisms on homology.
We start with a number of preliminary lemmas. 

\begin{lemma}
\label{liftinglemma}
Let $D$ be a free abelian group, $(A,\dff_A)$  a differential group,  and let  $h\colon D\to A$ be  a homomorphism such that $\dff_A  h=0$ and $H(h)=0$.\footnote{If $D$ is regarded as  differential group with $\dff=0$, then the condition
 $\dff_A h=0$ is equivalent to $h$ being a chain homomorphism. The induced homology map is denoted by $H(h)\colon D\to H(A)$.
We say that $D$ is a \emph{boundaryless} differential group.}
Then, $h\colon D\to B(A)$  and there exists a lift $l\colon D\to A$ such that the diagram
\begin{equation}\label{liftdiag12}
    \begin{tikzcd}[column sep=large, row sep=large]
                                          & A \arrow[d, "\dff_A", two heads] \\
D \arrow[r, "h"'] \arrow[ru, "l", dashed] & B(A)                          
\end{tikzcd}
\end{equation}
commutes, i.e. $h= \dff_A l$. 
\end{lemma}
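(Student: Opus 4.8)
The plan is to first unpack the two hypotheses $\dff_A h = 0$ and $H(h) = 0$ to establish that $h$ factors through the boundaries $B(A)$, and then to invoke the projectivity of the free abelian group $D$ to produce the desired lift $l$.

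First I would observe that $\dff_A h = 0$ says precisely that $h(d) \in \ker \dff_A = Z(A)$ for every $d \in D$, so $h(D) \subseteq Z(A)$. Regarding $D$ as a boundaryless differential group, the induced homology map $H(h)\colon D \to H(A) = Z(A)/B(A)$ is given by $d \mapsto [h(d)]$. The hypothesis $H(h) = 0$ therefore means $[h(d)] = 0$, i.e.\ $h(d) \in B(A)$ for all $d \in D$. Hence $h(D) \subseteq B(A)$, which is the first assertion of the lemma and makes the triangle in \eqref{liftdiag12} meaningful.

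Next I would construct the lift. The differential restricts to a surjection $\dff_A\colon A \twoheadrightarrow B(A)$, since $B(A) = \image \dff_A$ by definition. Because $D$ is free abelian, hence projective, the homomorphism $h\colon D \to B(A)$ lifts along this surjection: choosing a basis $\{e_i\}$ of $D$ and, for each $i$, an element $a_i \in A$ with $\dff_A a_i = h(e_i)$ (possible since $h(e_i) \in B(A)$), the assignment $e_i \mapsto a_i$ extends uniquely to a homomorphism $l\colon D \to A$ by freeness. By construction $\dff_A l(e_i) = h(e_i)$ for every basis element, so $\dff_A l = h$ by linearity, which is exactly the commutativity of \eqref{liftdiag12}.

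I do not anticipate a genuine obstacle here: the content is the standard projective lifting property of free modules. The only point requiring care is the correct reading of $H(h) = 0$---that it forces the image of $h$ into the boundaries $B(A)$, and not merely into the cycles $Z(A)$---after which the existence of the lift is immediate from the freeness of $D$.
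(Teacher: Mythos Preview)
Your proposal is correct and follows essentially the same approach as the paper: both arguments first use $\dff_A h=0$ to land in $Z(A)$, then use $H(h)=0$ to conclude the image lies in $B(A)$, and finally invoke projectivity of the free group $D$ to lift along the surjection $\dff_A\colon A\twoheadrightarrow B(A)$. The paper phrases the second step via the short exact sequence $0\to B(A)\to Z(A)\to H(A)\to 0$, while you unpack $H(h)$ directly, but the content is identical.
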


\begin{proof}
By the assumption that $\dff_A h=0$ we have that $h\colon D \to Z(A)$. Observe  the short exact sequence for homology
\begin{equation}
    \label{exactforDD}
    \begin{tikzcd}
0 \arrow[r] & B(A) \arrow[r, "e"] & Z(A) \arrow[r, "\rho"] & H(A) \arrow[r] & 0,
\end{tikzcd}
\end{equation}
where $\rho$ is the projection $x\mapsto [x]\in H(A)$ and $e$ is inclusion.
Consider the diagram:
\begin{equation}
    \label{exactforC}
\begin{tikzcd}[row sep=large]
            & A \arrow[d, "\dff_A"', two heads]  & D \arrow[d, "h"] \arrow[ld, "h"', dashed] \arrow[l, "l"']\arrow[rd,"H(h)"]&             &   \\
0 \arrow[r] & B(A) \arrow[r, "e"]                 & Z(A) \arrow[r, "\rho"]                          & H(A) \arrow[r] & 0
\end{tikzcd}
\end{equation}
From the assumption that  $H(h) = \rho h=0$ 
and the \emph{exactness} of  \eqref{exactforDD} we conclude that $h$ maps into $B(A)$ and thus $h\colon D \to B(A) \subset Z(A)$.
Since $D$ is a free abelian group it is {projective} and therefore there exists a lift $l\colon D\to A$ such that the diagram in \eqref{liftdiag12} commutes.
\qed
\end{proof}

\begin{remark}
    For the action of differentials we omit parentheses and for other homomorphisms we adopt the usual functional notation, i.e. $\dff c$ and $h(c)$. Parentheses are also omitted in compositions of maps.
\end{remark}
\begin{remark}
In the remainder of the paper $C\oplus C'$ (and $A\oplus A'$) denotes the Cartesian product as direct sum with specifically constructed differentials to emphasize that this is not the product of differential groups with the product differential but a product group with a chosen  differential.
\end{remark}
\begin{remark}
\label{forchaincompl1}
    If $(A,\dff_A)$ is a chain complex and $h_k\colon D\to A_k$ with $\dff_kh_k=0$, then
    $h_k\colon D\to B_k(A)$ and there exists a lift $l_k\colon D\to A_{k+1}$ such that $h_k=\dff_{k+1}l_k$.
\end{remark}
\begin{lemma}
    \label{connhomchar12}
    Let $C,C'$ be abelian groups and consider the
    short exact sequence
    \begin{equation}
        \label{firstshortseq}
    \begin{tikzcd}
0 \arrow[r] & C \arrow[r, "i"] & C\oplus C' \arrow[r, "j"] & C' \arrow[r] & 0,
\end{tikzcd}
\end{equation}
where $i(c) = (c,0)$, $j(c,c') = c'$. Let $\dff_C$ be a differential for $C$ and let $C'$ be boundaryless. Then, for $i$ and $j$ to be chain homomorphisms the differentials on $C\oplus C'$ are of the form
\begin{equation}
    \label{proddiffs1212}
\dff_{C\oplus C'} = \begin{pmatrix} \dff_C & \lambda \\ 0 & 0\end{pmatrix}, 
\end{equation}
where $\lambda\colon C'\to C$ is a chain homomorphisms, i.e. $\dff_C \lambda=0$.
Moreover, for the long exact sequence (exact triangle)
\begin{equation}
    \label{longexactseq1222}
\begin{tikzcd}
{} \arrow[r] & H(C) \arrow[r, "i"] & H(C\oplus C') \arrow[r, "j"] & C' \arrow[r, "H(\lambda)"] & H(C) \arrow[r] & {}
\end{tikzcd}
\end{equation}
the canonical connecting homomorphism is given by $H(\lambda)$.
\end{lemma}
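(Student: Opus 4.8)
The plan is to prove the two assertions separately: first the block-matrix normal form of the differential, then the identification of the connecting homomorphism via the snake construction.

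For the normal form I would write an arbitrary candidate differential on $C\oplus C'$ as a block endomorphism $\dff_{C\oplus C'}=\begin{pmatrix} p & q \\ r & s\end{pmatrix}$, with $p\colon C\to C$, $q\colon C'\to C$, $r\colon C\to C'$ and $s\colon C'\to C'$, and impose the structural constraints. The requirement that $i(c)=(c,0)$ be a chain homomorphism, i.e.\ $\dff_{C\oplus C'}\, i=i\,\dff_C$, forces $p=\dff_C$ and $r=0$ by comparing components. The requirement that $j(c,c')=c'$ be a chain homomorphism, i.e.\ $\dff_{C'}\,j=j\,\dff_{C\oplus C'}$, combined with the hypothesis that $C'$ is boundaryless (so $\dff_{C'}=0$), forces $r=0$ and $s=0$. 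Setting $\lambda:=q$ then yields the asserted form \eqref{proddiffs1212}. Finally, expanding $\dff_{C\oplus C'}^2=0$ and using $\dff_C^2=0$ collapses everything to the single off-diagonal identity $\dff_C\lambda=0$, which is exactly the statement that $\lambda\colon C'\to C$ is a chain homomorphism out of the boundaryless group $C'$.

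For the connecting homomorphism I would invoke the long exact homology sequence attached to the short exact sequence \eqref{firstshortseq} of differential groups, and then evaluate the connecting map $\partial\colon H(C')\to H(C)$ by the standard recipe. Because $C'$ is boundaryless we have $Z(C')=C'$ and $B(C')=0$, hence $H(C')=C'$ and every element of $C'$ is already a cycle representing its own class. Given $z'\in C'$, the snake construction proceeds as follows: lift $z'$ along $j$ to $(0,z')\in C\oplus C'$, apply the differential to obtain $\dff_{C\oplus C'}(0,z')=(\lambda z',0)=i(\lambda z')$, and observe that $\lambda z'\in Z(C)$ since $\dff_C\lambda=0$; therefore $\partial(z')=[\lambda z']$. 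On the other hand, the map $H(\lambda)$ induced on homology by the chain homomorphism $\lambda$ sends the cycle $z'\in H(C')=C'$ to $[\lambda z']$. The two maps agree, which is the claim.

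I expect no genuine obstacle here; the entire content is the block-matrix bookkeeping together with a single pass through the snake construction. The one point demanding care is the systematic use of the identification $H(C')=C'$: it is what licenses choosing $z'$ itself as the cycle representative, and it guarantees that the value $[\lambda z']$ is independent of the representative, since there are no nonzero boundaries in $C'$ available to alter the choice.
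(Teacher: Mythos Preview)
Your proposal is correct and follows essentially the same argument as the paper: the block-matrix bookkeeping for the form of $\dff_{C\oplus C'}$ is identical, and the snake-lemma computation of the connecting homomorphism is the same, with the only cosmetic difference that you choose the specific lift $(0,z')$ whereas the paper takes a general lift $(\tilde c, c')$ and observes that the extra term $\dff_C\tilde c$ is a boundary and hence vanishes in homology.
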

\begin{proof}
    Let us start with the differentials in \eqref{proddiffs1212}. 
In order for $i$ and $j$ to be chain maps we need: $i\dff_C = \dff_{C\oplus C'} i$ and $j \dff_{C\oplus C'} = \dff_{C'} j$. 
By assumption $\dff_{C\oplus C'}$ is given by 
\[
\dff_{C\oplus C'} = \begin{pmatrix} \kappa~ & ~\lambda \\ \mu~ & ~\nu\end{pmatrix},
\]
for some homomorphisms $\kappa\colon C\to C$, $\lambda\colon C'\to C$, $\mu\colon C\to C'$ and $\nu\colon C'\to C'$.
From the conditions on $i$ and $j$, and the fact that $C'$ is boundaryless ($\dff_{C'}=0$), we derive that 
$\bigl(\dff_C c,0\bigr) = i\dff_C c = \dff_{C\oplus C'} ic = \bigl(\kappa c,\mu c \bigr)$, which yields that $\kappa = \dff_C$ and $\mu=0$. Moreover, $\nu c' = j \dff_{C\oplus C'} (c,c') = 0\cdot j(c,c') = 0$, which implies that $\nu = 0$ 
 and therefore $\dff_{C\oplus C'}$ is given by \eqref{proddiffs1212} for some homomorphism $\lambda\colon C'\to C$. 
 Since $\dff_{C\oplus C'}$ is a differential we obtain the condition $\dff_C\lambda =0$, which makes $\lambda$ a chain homomorphism.

As for the long exact sequence we follow the standard construction for the connecting homomorphism. Let $c'\in C'=Z(C')$. By exactness, $c' = j(\tilde c,c')$ for some $\tilde c\in C$. Since $j$ is a chain homomorphism we have that
$j\dff_{C\oplus C'}(\tilde c,c') = 0\cdot j(\tilde c,c')$ and thus
$\dff_{C\oplus C'}(\tilde c,c') \in \ker j = \image i$, which implies that 
$\dff_C \tilde c+\lambda c' =c$
for some $c\in C$. 
Observe that
$\dff_C c 
=\dff_C\bigl( \dff_C \tilde c+\lambda c'\bigr) = \dff_C^2 \tilde c + \dff_C\lambda c' =0$. Define 
the connecting homomorphism $k[c'] := [c] = [\dff_C\tilde c + \lambda c'] = [\lambda c'] = H(\lambda)[c']$, which establishes the connecting homomorphism.
\qed
\end{proof}

\begin{remark}
    \label{forchaincompl2}
    If $(C,\dff_C)$ is a chain complex the short exact sequence in \eqref{firstshortseq} holds degree wise and yields: 
    \begin{equation}\label{forchaincompl3}
    \begin{tikzcd}
     & {} \arrow[d]          & {} \arrow[d]  & {} \arrow[d]\\
0 \arrow[r] & C_k \arrow[r,"i_k"] \arrow[d, "\dff_k"] & C_k\oplus C'_k \arrow[r,"j_k"] \arrow[d, "\bar\dff_k"] & C'_k \arrow[r] \arrow[d, "0"] & 0 \\
0 \arrow[r] & C_{k-1} \arrow[r,"i_{k-1}"]   \arrow[d]             & C_{k-1}\oplus C'_{k-1} \arrow[r,"j_{k-1}"]  \arrow[d]              & C'_{k-1} \arrow[r] \arrow[d]               & 0\\
& {}                    & {}      &{}
\end{tikzcd}
\end{equation}
In this case $\bar\dff_k (c,c') = \dff_k c+ \lambda_k c'$, where $\lambda_k\colon C'_k\to C_{k-1}$ satisfies $\dff_{k-1}\lambda_k=0$, and for the long exact sequence $H_k(\lambda_k)$ is the connecting homomorphism. The homology in degree $k$ is given by the functor $H_k$.
\end{remark}

\begin{lemma}\label{lem:kernels}
Consider the following commutative diagram of differential groups with exact rows:
\begin{equation}\label{diag7}
    \begin{tikzcd}[column sep=large, row sep=large]
0 \arrow[r] & C \arrow[r, "i"] \arrow[d, "f'"'] & C\oplus C' \arrow[r, "j"] & C' \arrow[r] \arrow[d, "f''"] & 0 \\
0 \arrow[r] & A \arrow[r, "i"]                 & A \oplus A' \arrow[r, "j"] & A' \arrow[r]                & 0
\end{tikzcd}
\end{equation}
where $i(c) = (c,0)$, $j(c,c') = c'$, 
$f'$ and $f''$ are chain homomorphisms
and where  $C'$ and $A'$ are boundaryless,  $C'$ is a free abelian group and $(C,\dff_C)$ is a differential group.
Then, the differentials on $C\oplus C'$ and $A\oplus A'$ are given by Lemma \ref{connhomchar12} and are of the form
\begin{equation}
    \label{proddiffs}
\dff_{C\oplus C'} = \begin{pmatrix} \dff_C & \lambda \\ 0 & 0\end{pmatrix}, \quad
\dff_{A\oplus A'} = \begin{pmatrix} \dff_A & \lambda' \\ 0 & 0 \end{pmatrix},
\end{equation}
where $\lambda\colon C'\to C$ and $\lambda'\colon A' \to A$ are chain homomorphisms, i.e. $\dff_C \lambda=0$ and $\dff_A\lambda'=0$.
Let $K :=j Z(C\oplus C')\subset C'$ and $K' := j Z(A\oplus A')\subset A'$.
If, 
\begin{equation}
    \label{homcond12}
    H(f')H(\lambda)=H(\lambda') f'',
\end{equation}
then
the map $f''$ induces  a chain homomorphism $f''\colon K\to K'$ by restriction, such that the  diagram 
\begin{equation}\label{diagkernel}
    \begin{tikzcd}[column sep=large, row sep=large]
0 \arrow[r] & Z(C) \arrow[r, "i"] \arrow[d, "f'"'] & Z(C\oplus C') \arrow[r, "j"] & K \arrow[r] \arrow[d, "f''"] & 0 \\
0 \arrow[r] & Z(A) \arrow[r, "i"]                 & Z(A \oplus A') \arrow[r, "j"] & K' \arrow[r]                & 0
\end{tikzcd}
\end{equation}
commutes with exact rows. Moreover, if $C'$ and $A'$ are finitely generated free abelian groups, then there exist complemented subgroups $M\subset C'$, $M'\subset A'$ and isomorphisms $\omega\colon M\to K$ and $\omega'\colon M'\to K'$, such that
the diagram
\begin{equation}\label{diagkernel12}
    \begin{tikzcd}[column sep=large, row sep=large]
0 \arrow[r] & Z(C) \arrow[r, "i"] \arrow[d, "f'"] & Z(C\oplus C') \arrow[r, "\bar j"] 
& M \arrow[r] \arrow[d, "f''"] & 0 \\
0 \arrow[r] & Z(A) \arrow[r, "i"]                & Z(A\oplus A') \arrow[r, "\bar j"]                & M' \arrow[r]                & 0
\end{tikzcd}
\end{equation}
commutes with exact rows, and where $f''\colon M\to M'$ is a chain homomorphism given by restriction.
\end{lemma}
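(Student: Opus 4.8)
The plan is to push everything through the explicit block coordinates of Lemma \ref{connhomchar12} and then reduce the second assertion to the structure theorem for finitely generated abelian groups. First I would apply Lemma \ref{connhomchar12} to each row of \eqref{diag7}: the standing hypotheses — that $C'$, $A'$ are boundaryless, $C'$ is free, and $(C,\dff_C)$, $(A,\dff_A)$ are differential groups — are exactly those of that lemma, so the differentials take the block form \eqref{proddiffs} with chain maps $\lambda\colon C'\to C$ and $\lambda'\colon A'\to A$ satisfying $\dff_C\lambda=0$, $\dff_A\lambda'=0$. Reading off cycles, $(c,c')\in Z(C\oplus C')$ iff $\dff_C c+\lambda c'=0$, so
\[
K=j\,Z(C\oplus C')=\{c'\in C' : \lambda c'\in B(C)\}.
\]
Since $C'$ is boundaryless, $H(C')=Z(C')=C'$ and $H(\lambda)(c')=[\lambda c']$, whence $K=\ker\bigl(H(\lambda)\colon C'\to H(C)\bigr)$; identically $K'=\ker\bigl(H(\lambda')\colon A'\to H(A)\bigr)$.

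Next I would feed in the compatibility hypothesis \eqref{homcond12}. For $c'\in K$ it gives $H(\lambda')f''(c')=H(f')H(\lambda)(c')=0$, so $f''(c')\in\ker H(\lambda')=K'$; hence $f''$ restricts to $f''\colon K\to K'$, which is automatically a chain map since $K$, $K'$ carry the zero differential (and $f'$ restricts to $Z(C)\to Z(A)$ because it is a chain map). Exactness of the rows of \eqref{diagkernel} is then routine: $i$ is injective, $j$ is onto $K$ by definition, and $(c,0)\in Z(C\oplus C')$ iff $\dff_C c=0$, so $\ker j\cap Z(C\oplus C')=i\,Z(C)$. As the displayed ladders carry no middle vertical arrow, commutativity here means compatibility of the two short exact sequences under $f'$ and $f''$, which holds by construction; should a middle map be needed, it is supplied by splitting the top row (free, as $K\subseteq C'$ is a subgroup of a free group) and using $i f'$ on $i\,Z(C)$ together with a section of $f''$ on a complement. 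This settles the first assertion.

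For the second assertion the genuine obstruction is that $K$ need not be a direct summand of $C'$: the quotient $C'/K\cong\image H(\lambda)\subseteq H(C)$ may have torsion (e.g.\ $C'=\Z$, $K=2\Z$), so $K$ cannot itself be the complemented model. Assuming now $C'$, $A'$ finitely generated free, I would invoke the stacked basis theorem: there is a basis $e_1,\dots,e_n$ of $C'$ and positive integers $d_1\mid\cdots\mid d_r$ with $d_1e_1,\dots,d_re_r$ a basis of $K$, and likewise a basis $e'_1,\dots,e'_m$ of $A'$ with $d'_1\mid\cdots\mid d'_s$ and $d'_1e'_1,\dots,d'_se'_s$ a basis of $K'$. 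Setting $M=\Span_{\Z}\{e_1,\dots,e_r\}$ and $M'=\Span_{\Z}\{e'_1,\dots,e'_s\}$ gives complemented subgroups, and $\omega\colon M\to K$, $e_i\mapsto d_ie_i$ and $\omega'\colon M'\to K'$, $e'_l\mapsto d'_le'_l$ are isomorphisms. Defining $\bar j:=\omega^{-1}j$ keeps the rows exact, since $\omega$ is an isomorphism and $\ker\bar j=\ker j=i\,Z(C)$.

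The crux — and the step I expect to be the main obstacle — is to show that $f''$ restricts to $M\to M'$, so that \eqref{diagkernel12} is honestly a diagram of restrictions rather than of transported maps. Writing $f''(e_i)=\sum_{l=1}^m b_{li}e'_l$, the hypothesis $f''(K)\subseteq K'$ forces $f''(d_ie_i)=\sum_{l} d_ib_{li}e'_l\in K'$; because $K'$ has no component along $e'_l$ for $l>s$, this yields $d_ib_{li}=0$, hence $b_{li}=0$, for every $l>s$. Therefore $f''(e_i)\in M'$ for each $i\le r$, i.e.\ $f''(M)\subseteq M'$, and the restriction $f''\colon M\to M'$ is a (trivially chain) homomorphism. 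As before, splitting the free top rows completes \eqref{diagkernel12} to a commuting ladder. Everything outside this coefficient‑vanishing observation is bookkeeping with Lemma \ref{connhomchar12} and elementary exactness, so the delicate content is precisely extracting $f''(M)\subseteq M'$ from $f''(K)\subseteq K'$ via the stacked‑basis choice.
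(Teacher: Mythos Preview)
Your proof is correct, and for the second assertion (the Smith normal form / stacked basis construction of $M$, $M'$ and the coordinate check that $f''(M)\subseteq M'$) it matches the paper's argument essentially verbatim.

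For the first assertion, however, you take a genuinely shorter route. The paper does not make the identification $K=\ker\bigl(H(\lambda)\colon C'\to H(C)\bigr)$ explicit; instead it introduces the auxiliary map $h:=f'\lambda-\lambda'f''\colon C'\to A$, checks $\dff_A h=0$ and $H(h)=0$ via \eqref{homcond12}, invokes Lemma~\ref{liftinglemma} on the free subgroup $K$ to produce a lift $l\colon K\to A$ with $h|_K=\dff_A l$, and then for each $z'\in K$ exhibits an explicit cycle $\zeta=(f'(z)+l(z'),f''(z'))\in Z(A\oplus A')$ witnessing $f''(z')\in K'$. Your observation that $K$ and $K'$ are precisely the kernels of the connecting homomorphisms $H(\lambda)$, $H(\lambda')$ lets \eqref{homcond12} do this in a single line, bypassing the lift entirely. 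What the paper's longer construction buys is an explicit preimage in $Z(A\oplus A')$ for each $f''(z')$, but this is not used elsewhere in the paper (the lifts needed in Theorem~\ref{lem:main} are built afresh), so your simplification loses nothing.
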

\begin{proof}
By definition $Z(C\oplus C') = \bigl\{ (z,z')\mid \dff_C z+\lambda z'=0\bigr\}$
and the projection $K$ is characterized by 
\begin{equation}
    \label{projZ}
    K=\bigl\{ z'\in C' \mid \dff_C z+\lambda z'=0 \text{~for some~} z\in C\bigr\}.
\end{equation} 
 The same holds for $Z(A\oplus A')$ and $K'$. With the chain maps $i$ and $j$ the rows in \eqref{diagkernel} are exact.
Since $f'$ is a chain homomorphism the restriction of $f'$ to $Z(C)$ defines the homomorphism $f'\colon Z(C) \to Z(A)$. Moreover, the diagram in \eqref{diagkernel} commutes provided that $f''$ is a well-defined homomorphism from $K$ to $K'$.
Consider the compositions $f'\lambda$ and $\lambda'f''$. Then, 
\begin{equation}
    \label{dAhiszero}
     \dff_A f'\lambda = f' d_C\lambda =0,\quad\text{and}\quad
     \dff_A \lambda' f''  = 0,
\end{equation}
which follows from the fact that
 $d_C\lambda = 0$ and $d_A\lambda' = 0$. 
 We conclude that 
$f'\lambda\colon C'\to Z(A)$ and $\lambda'f''\colon C' \to Z(A)$.
Define the homomorphism $h = f'\lambda - \lambda' f''$ acting from $C'$ to $A$. Then, $\dff_A h=0$ and thus
 $h\colon C' \to Z(A)$.
Consider the diagram:
\begin{equation}\label{exactforCa}
    \begin{tikzcd}
            &                  & C' \arrow[d, "h"] \arrow[rd,"H(h)"]&             &   \\
0 \arrow[r] & B(A) \arrow[r, "e"] & Z(A) \arrow[r, "\rho"] & H(A) \arrow[r] & 0
\end{tikzcd}
\end{equation}
Observe that $H(h) =0$. Indeed, since both $f'(\lambda c')\in Z(A)$ and $\lambda'f''(c')\in Z(A)$ the homology class satisfies:
\[
\begin{aligned}
H(h)[c']&=[h(c')] =\rho h(c') = \rho f'(\lambda c') - \rho \lambda' f''(c') =   [f'(\lambda c')] - [\lambda' f''(c')] \\
&=   H(f')[\lambda c'] - H(\lambda') f''(c') = H(f')H(\lambda) [c']-  H(\lambda') f''(c')=0,
\end{aligned}
\]
where $[c']=c'$, and which uses \eqref{homcond12}.

Let $z'\in K$. We show that $f''(z')\in K'$. 
Under restriction of
 $h$ to $K\subset C'$ we have again  $\dff_A h|_K =0$ and $H(h|_K)=0$.
 Since $C'$ is a free abelian group and $K\subset C'$ a subgroup it follows that
$K$ is free and thus projective.\footnote{Since abelian groups are $\Z$-modules the objects in the category $\sAb$ are free if and only if they are projective. Submodules of a free $\Z$-module are again free and thus projective.}
By Lemma \ref{liftinglemma} 
$h|_K\colon K\to B(A)$ and there exists 
 a lift $l\colon K\to A$ such that $h|_K = \dff_A l$.
For $z'$ define an element $a \in A$, given by $a= l(z')$, so that $\dff_A a = \dff_A l(z') = h|_K(z')$.  
This implies that
$\dff_A a =  f'(\lambda z')- \lambda' f''(z')$ and thus
$f'(\lambda z') = \lambda' f''(z') + \dff_A a$.
For $z'\in K$ the definition of the latter yields a $z\in C$ such that $(z,z')\in Z(C\oplus C')$, i.e. $\dff_C z+\lambda z'=0$.
From $a$ we define
 $\zeta = \bigl(f'(z)+a,f''(z')\bigr)\in A\oplus A'$. Then, by the choice of $a$ and the fact that $f'$ is a chain homomorphism, we have that: 
\[
\begin{aligned}
\dff_{A\oplus A'} \zeta &=\dff_A(f'(z) +a) + \lambda ' f''(z') = f'(\dff_C z) +\big(  \lambda' f''(z') + \dff_A a \big)\\
&=  f'(\dff_C z) + f'(\lambda z') = f'(\dff_C z+\lambda z') = 0,
\end{aligned}
\]
which shows that $\zeta\in Z(A\oplus A')$. By definition $f''(z') = j(\zeta) \in K'$ proving our assertion.
The restriction of  $f''$ to $K$ is obviously a chain homomorphism
since $C'$ and $A'$ are boundaryless.

Since $K\subset C'$ 
is a subgroup of a free abelian group it is also free, with $\rank(K)\le \rank(C')<\infty$, and the basis elements for $K$ can be expressed in terms of a basis for $C'$. To be more precise, using Smith normal form, we can choose a basis
 $\{c'_1,\cdots,c'_r\}$  for $C'$, $r=\rank(C')$, such that
$\{\omega_1 c_1',\cdots, \omega_k c_k'\}$ is a basis for $K$, with $r'=\rank(K)\le \rank(C')=r$. By construction, the subgroup
 subgroup $M\subset C'$, generated by the indicated elements $\{c_1',\cdots,c_{r'}'\}$, is isomorphic to $K$ and an isomorphism $\omega\colon M\to K$ is given as follows: $c'_i \mapsto \omega_i c'_i$, $i=1,\cdots,r'$. By construction $M$ allows a complementary subgroup $M^c = [c'_{r'+1},\cdots,c'_r]$ such that 
$C'=M\oplus M^c$. 
The same applies to $K'\subset A'$ 
by choosing a basis $\{a'_1,\cdots,a'_s\}$, $s=\rank(A')$ such that
$K'=[\omega'_1 a'_1,\cdots,\omega'_{s'} a'_{s'}]$, $s'=\rank(K')\le \rank(A')=s$.
As before this yields a decomposition $A'=M'\oplus M'^c$, where $M'\subset A'$ and $M'$ isomorphic to $K'$ with isomorphism $\omega'\colon M'\to K'$ given by $a'_j\mapsto \omega'_j a'_j$, $j=1,\cdots,s'$.
Since $f''$ maps from $K$ to $K'$, for $i=1,\cdots,r'$ we have, $f''(\omega_i c'_i)
=\lambda_{i1}\omega'_1 a'_1 + \cdots + \lambda_{i s'}\omega'_{s'} a'_{s'}$, $f''(c'_i) = \mu_{i1} a'_1 + \cdots + \mu_{is} a'_s$, $i=1,\cdots,r'$, and $f''(\omega_i c'_i)=\omega_i\mu_{i1} a'_1 + \cdots + \omega_i\mu_{is} a'_s$. This implies $f''(\omega_i c'_i)=\omega_i\mu_{i1} a'_1 + \cdots + \omega_i\mu_{i\ell} a'_{s'}$ and thus $\mu_{ij}=0$ for
$j=s'+1,\cdots,s$ and $i=1,\cdots,r'$, and $\omega_i\mu_{ij}=\lambda_{ij}\omega'_j$ for
$j=1,\cdots,s'$ and $i=1,\cdots,r'$. Therefore, $f''\colon M\to M'$. As before the diagram is commutative with exact rows.
 \qed
\end{proof}

\begin{remark}
Equation \eqref{homcond12} yields a commutative diagram in homology
and a cube of  homomorphisms of which the front and back faces are not commutative in general:
\[
\begin{tikzcd}[column sep=large, row sep=large]
C' \arrow[r, "{H(\lambda)}"] \arrow[d, "f''"'] & H(C) \arrow[d, "H(f')"] \\
A' \arrow[r, "{H(\lambda')}"]                 & H(A)               
\end{tikzcd}
\quad\quad
    \begin{tikzcd}[cramped]
                                   & C' \arrow[ld, "0"'] \arrow[rr, "\lambda"] \arrow[d, "f''", no head] &                   & C \arrow[ld, "\dff_C"] \arrow[dd, "f'"] \\
C' \arrow[dd, "f''"'] \arrow[rr, "\lambda" near end] & {} \arrow[d]                                               & C \arrow[dd, "f'" near start]      &                              \\
                                   & A' \arrow[ld, "0"'] \arrow[r, no head, "\lambda'"]                      & {} \arrow[r] & A \arrow[ld, "\dff_A"]            \\
A' \arrow[rr, "\lambda'"]                  &                                                            & A                 &                             
\end{tikzcd}
\]
where $C'=H(C')$, $A'=H(A')$, $f'' = H(f'')$
and  $\lambda$ and $\lambda'$ are chain homomorphisms. 
\end{remark}

\begin{remark}
    \label{forchaincompl4}
    If $(C,\dff_C)$ and $(A,\dff_A)$ are chain complexes we consider the chain homomorphisms
    $f'$ and $f''$ degree wise, i.e.
    $f'_k\colon C_k\to A_k$ and $f''_k\colon C'_k \to A'_k$ for all $k$. In this case
    \eqref{homcond12} becomes
    $H_{k-1}(f'_{k-1})H_k(\lambda_k)=H_k(\lambda'_k) f''_k$,
    with $\lambda_k\colon C'_k\to C_{k-1}$ and $\lambda'_k\colon A'_k\to A_{k-1}$. Diagram \eqref{diagkernel12} holds degree wise with complemented subgroups $M_k$ and $M'_k$ for all $k$.    
\end{remark}

\begin{lemma}\label{lem:gamma}
Consider the commutative diagram:
\begin{equation}\label{dia:lemma8}
    \begin{tikzcd}
                                                 & P' \arrow[ld, "f'"'] \arrow[rr] \arrow[d, "\epsilon'", no head] &                                                 & P'\oplus P'' \arrow[rr] \arrow[d, "\epsilon", no head]      &                                      & P'' \arrow[dd, "\epsilon''"] \arrow[ld, "f''"'] \\
Q' \arrow[dd, "\eta'"] \arrow[rr, phantom] \arrow[rr] & {} \arrow[d]                                          & Q'\oplus Q'' \arrow[rr] \arrow[d, "\eta", no head] \arrow[dd] & {} \arrow[d]                              & Q'' \arrow[d, "\eta''", no head] \arrow[dd] &                                    \\
                                                 & D' \arrow[ld, "g'"] \arrow[r, "i_D", no head]             & {} \arrow[r]                                    & D \arrow[ld, "g"] \arrow[r, "j_D", no head] & {} \arrow[r]                         & D''             \arrow[ld, "g''"]                      \\
B' \arrow[rr, "i_B"']                               &                                                       & B \arrow[rr, "j_B"']                              &                                           & B''                    &                                   
\end{tikzcd}
\end{equation}
where $\eta'$ is surjective, the sequence  $0\xrightarrow[]{~~~} B'\xrightarrow[]{i_B} B\xrightarrow[]{j_B} B''$  is exact and $P''$ is a free abelian group.
The maps on the top part of the diagram are the usual inclusion and projection homomorphisms.
Then, there exists a homomorphism $\gamma \colon P''\to Q'$ such that 
for the choice
\begin{equation}
    \label{constrofF}
f = \begin{pmatrix} f' & \gamma \\ 0 & f'' \end{pmatrix} \colon P'\oplus P'' \to Q'\oplus Q'',
\end{equation}
the  diagram 
\begin{equation}\label{dia:lemma8a}
    \begin{tikzcd}
                                                 & P' \arrow[ld, "f'"'] \arrow[rr] \arrow[d, "\epsilon'", no head] &                                                 & P'\oplus P'' \arrow[rr] \arrow[d, "\epsilon", no head] \arrow[ld, "f"']      &                                      & P'' \arrow[dd, "\epsilon''"] \arrow[ld, "f''"'] \\
Q' \arrow[dd, "\eta'"] \arrow[rr, phantom] \arrow[rr] & {} \arrow[d]                                          & Q'\oplus Q'' \arrow[rr] \arrow[d, "\eta", no head] \arrow[dd] & {} \arrow[d]                              & Q'' \arrow[d, "\eta''", no head] \arrow[dd] &                                    \\
                                                 & D' \arrow[ld, "g'"] \arrow[r, "i_D", no head]             & {} \arrow[r]                                    & D \arrow[ld, "g"] \arrow[r, "j_D", no head] & {} \arrow[r]                         & D''          \arrow[ld, "g''"]                        \\
B' \arrow[rr, "i_B"']                               &                                                       & B \arrow[rr, "j_B"']                              &                                           & B''                    &                                   
\end{tikzcd}
\end{equation}
commutes and $f$ is a lift of $g$ in the sense that $g\epsilon = \eta f$.
\end{lemma}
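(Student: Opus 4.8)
The plan is to first pin down the shape of $f$ and then reduce the choice of the single off-diagonal entry $\gamma$ to a projective lifting problem. Commutativity of the top face forces $f$ to respect the inclusion $P'\hookrightarrow P'\oplus P''$ and the projection $P'\oplus P''\to P''$: it must send $(p',0)$ to $(f'(p'),0)$ and satisfy $\mathrm{pr}_{Q''}\circ f=f''\circ\mathrm{pr}_{P''}$, which is exactly the upper-triangular form \eqref{constrofF}, leaving $\gamma\colon P''\to Q'$ as the only free datum. Since all maps are homomorphisms, the required identity $g\epsilon=\eta f$ need only be verified separately on $P'$ and on $P''$. On $P'$ it holds for \emph{any} $\gamma$: using the already-commutative faces of \eqref{dia:lemma8} I would compute $g\epsilon(p',0)=g\,i_D\,\epsilon'(p')=i_B\,g'\epsilon'(p')=i_B\,\eta'f'(p')=\eta f(p',0)$, where the middle equalities are the bottom-left square $g\,i_D=i_B\,g'$ and the left depth square $\eta'f'=g'\epsilon'$. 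Thus the entire problem is concentrated on $P''$.

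Next I would isolate the obstruction. For $p''\in P''$ define the homomorphism $\delta\colon P''\to B$ by $\delta(p''):=g\,\epsilon(0,p'')-\eta(0,f''(p''))$. Expanding $\eta f(0,p'')=\eta(\gamma(p''),f''(p''))=i_B\,\eta'\gamma(p'')+\eta(0,f''(p''))$ (using the front-left square $\eta\circ\iota_{Q'}=i_B\eta'$), the equality $g\epsilon=\eta f$ on $P''$ becomes precisely $i_B\,\eta'\gamma=\delta$. The crucial point is that $\delta$ lands in $\ker j_B$: applying $j_B$ and using $j_B\,g=g''\,j_D$ (bottom-right square), $j_D\,\epsilon=\epsilon''\,\mathrm{pr}_{P''}$ and $j_B\,\eta=\eta''\,\mathrm{pr}_{Q''}$ (the right squares of the back and front faces), together with the right depth square $\eta''f''=g''\epsilon''$, yields $j_B\,\delta(p'')=g''\epsilon''(p'')-g''\epsilon''(p'')=0$. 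This vanishing computation, where the commutativities of the right-hand squares must be assembled in the correct order, is the main obstacle; everything else is formal.

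Finally I would solve the lifting problem. By exactness of $0\to B'\xrightarrow{i_B}B\xrightarrow{j_B}B''$ the map $i_B$ is injective with image $\ker j_B$, so $\delta$ factors uniquely as $\delta=i_B\,\delta'$ for a homomorphism $\delta'\colon P''\to B'$. Because $\eta'\colon Q'\to B'$ is surjective and $P''$ is free abelian, hence projective, $\delta'$ lifts through $\eta'$: there exists $\gamma\colon P''\to Q'$ with $\eta'\gamma=\delta'$ (this single invocation of projectivity of $P''$ together with surjectivity of $\eta'$ is exactly where those two hypotheses are needed). For this $\gamma$ one has $i_B\,\eta'\gamma=i_B\,\delta'=\delta$, hence $\eta f=g\epsilon$ on $P''$ and therefore on all of $P'\oplus P''$. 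Since the top face commutes by the triangular form of $f$ and every remaining face of \eqref{dia:lemma8a} already commuted in \eqref{dia:lemma8}, the enlarged diagram commutes and $f$ is the desired lift of $g$.
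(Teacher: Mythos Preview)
Your proof is correct and follows essentially the same approach as the paper's: verify the lifting identity on $P'$ by chasing the left-hand squares, define the obstruction $\delta$ on $P''$, show $j_B\delta=0$ via the right-hand squares, factor through $\ker j_B=\image i_B$ by exactness, and lift along the surjection $\eta'$ using projectivity of $P''$. The only cosmetic difference is that you work directly with elements $(0,p'')$, whereas the paper treats general $(x,y)$ and then argues the obstruction is independent of $x$; your version is slightly more streamlined but otherwise identical in content.
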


\begin{proof}
We define a homomorphism
 $\gamma\colon P''\to Q'$ so that $f$ given in \eqref{constrofF}
is a lift of $g$ in the sense that $ g\epsilon = \eta f$.   Regardless of the definition of $\gamma$, we already have commutativity on $P'\oplus 0$.
Let $(x,0) \in P'\oplus P''$. Then, $f(x,0)=\bigl(f'(x),0 \bigr)$ and a
diagram chase yields
\[
\eta f(x,0) = \eta(f'(x),0) = i_B\eta' f'(x) = i_B g'\epsilon'(x) = g i_D\epsilon' (x) = g\epsilon (x,0) .
\]
It remains to define $\gamma$ on $P''$. Let  $(x,y)\in P'\oplus P''$. In order for $f$ to be a lift of $g$  we need $g\bigl(\epsilon(x,y)\bigr) = \eta f(x,y)$ and therefore
\[
\begin{aligned}
g\epsilon(x,y) &= \eta f(x,y) = \eta \bigl(f'(x)+\gamma y, f''( y)\bigr)\\
&= \eta  \bigl(f'(x), f''(y)\bigr)  + \eta (\gamma y, 0) =  \eta  \bigl(f'(x), f''(y)\bigr)  + i_B \eta ' \gamma y,
\end{aligned}
\]
which implies
\begin{equation}
    \label{condgamma11}
i_B \eta ' \gamma y = g\epsilon(x,y) - \eta  \bigl(f'(x), f''(y)\bigr).
\end{equation}
Using the commutativity with respect to elements $(x,0)\in P'\oplus P''$ one derives that
$g\epsilon (x,0) - 
\eta \bigl(f'(x), 0\bigr)=0$ and thus
\[
\begin{aligned}
 g\epsilon (x,y) &- 
\eta \bigl(f'(x), f''(y)\bigr) \\
&= g\epsilon (x,0) - 
\eta \bigl(f'(x), 0\bigr) +g\epsilon (0,y) - 
\eta \bigl(0, f''(y)\bigr)\\
&= g\epsilon (0,y) - 
\eta \bigl(0, f''(y)\bigr),
\end{aligned}
\]
showing that the expression $g\epsilon (x,y) - 
\eta \bigl(f'(x), f''(y)\bigr)$ is independent of $x$.
Observe that 
\[
\begin{aligned}
j_B\Bigl[ g\epsilon(x,y) - \eta  \bigl(f'(x), f''(y)\bigr) \Bigr] &= 
j_B g \epsilon (x,y) - j_B \eta \bigl(f'(x),f''(y)\bigr)\\
&=
g''j_D \epsilon(x,y) - \eta'' f''( y)\\
&= g''\epsilon''(y) - \eta'' f''( y) = 0,
\end{aligned}
\]
using the fact that $f''$ is a lift of $g''$.
Since $0\xrightarrow[]{~~~} B'\xrightarrow[]{i_B} B\xrightarrow[]{j_B} B''$  is \emph{exact} we have that $\ker j_B = \image i_B$ 
and $i_B$ is injective. Consequently,
\[
\begin{aligned}
b &= g\epsilon (x,y) - 
\eta \bigl(f'(x), f''(y)\bigr)\\
&= g\epsilon (0,y) - 
\eta \bigl(0, f''(y)\bigr) \in \image i_B,
\end{aligned}
\]
independent of $x$. Due to the latter and the injectivity of $i_B$ this defines a homomorphism  $\beta\colon P''\to B'$ given by  $\beta y = i_B^{-1}\bigl( g\epsilon (0,y) - 
\eta \bigl(0, f''(y)\bigr)\bigr)$.
By assumption 
we have the 
\emph{surjective} homomorphism $\eta'\colon Q' \twoheadrightarrow B'$.
Since $P''$ is free abelian, and therefore projective, there
exists a lift $\gamma\colon P'' \to Q'$ such that the diagram
\[
    \begin{tikzcd}[column sep=large, row sep=large]
                                          & Q' \arrow[d, "\eta'", two heads] \\
P'' \arrow[r, "\beta"'] \arrow[ru, "\gamma", dashed] & B'                          
\end{tikzcd}
\]
commutes. Therefore, $\gamma \eta'y = \beta y$ and thus $i_B \gamma \eta' y = i_B\beta y$ which establishes \eqref{condgamma11} and concludes the  construction of $f$.
\qed
\end{proof}

The following theorem concerns $C\oplus C'$ and $A\oplus A'$ not as product of differential groups but with differentials $\dff_{C\oplus C'}$ and $\dff_{A\oplus A'}$ respectively, cf.\ \eqref{proddiffs}, and with embedded differential groups $C$ and $A$ with differential $\dff_C$ and $\dff_A$ respectively, and quotient  groups
$C'$ and $A'$ (boundaryless).

\begin{remark}
    \label{forchaincompl5}
    In the case of chain complexes
    Lemma \ref{lem:gamma} can be applied degree wise which yields homomorphism $f$ in every degree, cf.\ Thm.\ \ref{lem:main}.
\end{remark}

\begin{theorem}\label{lem:main}
Consider the following commutative diagram of {free} differential groups with exact rows:
\begin{equation}\label{diag7-12}
    \begin{tikzcd}[column sep=large, row sep=large]
0 \arrow[r] & C \arrow[r, "i"] \arrow[d, "f'"'] & C\oplus C' \arrow[r, "j"] & C' \arrow[r] \arrow[d, "f''"] & 0 \\
0 \arrow[r] & A \arrow[r, "i"]                 & A \oplus A' \arrow[r, "j"] & A' \arrow[r]                & 0
\end{tikzcd}
\end{equation}
where  $C'$ and $A'$ are  finitely generated and  boundaryless 
differential groups, $i$ and $j$ are given in Lemma \ref{lem:kernels}
and $f'$ and $f''$ are chain homomorphisms.
By Lemma \ref{connhomchar12} the differentials $\dff_{C\oplus C'}$ and $\dff_{A\oplus A'}$ are given by \eqref{proddiffs}.
Assume that the long exact sequences in homology make the commutative diagram
\begin{equation}\label{homdiag}
    \begin{tikzcd}[column sep=large, row sep=large]
{} \arrow[r] & H(C) \arrow[r] \arrow[d, "H(f')"] & H(C\oplus C') \arrow[r] \arrow[d, "g"] & C' \arrow[r, "{H(\lambda)}"] \arrow[d, "f''"] & H(C) \arrow[r] \arrow[d, "H(f')"] & {} \\
{} \arrow[r] & H(A) \arrow[r]                & H(A\oplus A') \arrow[r]                & A' \arrow[r, "{H(\lambda')}"]                & H(A) \arrow[r]                & {}
\end{tikzcd}
\end{equation}
where $g\colon H(C\oplus C') \to H(A\oplus A')$ is a homomorphism, and $H(f')$ and $f''=H(f'')$ are the induced homomorphism on homology and the rows are exact (exact triangles). 
Then, there exists a chain homomorphism $f$ that is a lift for $g$ such that the diagram 
\begin{equation}\label{dia:lem:ses}
    \begin{tikzcd}[column sep=large, row sep=large]
0 \arrow[r] & C \arrow[r] \arrow[d, "f'"] & C\oplus C' \arrow[r] \arrow[d, "f"] & C' \arrow[r] \arrow[d, "f''"] & 0 \\
0 \arrow[r] & A \arrow[r]                & A\oplus A' \arrow[r]                & A' \arrow[r]                & 0
\end{tikzcd}
\end{equation}
is a morphism of short exact sequences of differential groups, i.e. the diagram commutes with short exact rows. 
\end{theorem}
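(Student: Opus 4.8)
The plan is to build $f$ in the only block shape compatible with the inclusions and projections, and then devote the real work to choosing its single off-diagonal entry. By Lemma \ref{connhomchar12} the differentials are given by \eqref{proddiffs}, so I look for
\[
f=\begin{pmatrix} f' & \gamma \\ 0 & f''\end{pmatrix}\colon C\oplus C'\to A\oplus A',\qquad \gamma\colon C'\to A.
\]
Because the lower-left block vanishes, $f$ restricts to $f'$ on $i(C)$ and descends to $f''$ on the quotient $C'$; hence \eqref{dia:lem:ses} commutes with exact rows for \emph{any} choice of $\gamma$, and all the content lies in $\gamma$. Multiplying the block matrices against \eqref{proddiffs} shows that $f$ is a chain homomorphism precisely when
\[
\dff_A\gamma=f'\lambda-\lambda' f''=:h\colon C'\to A,
\]
the diagonal blocks matching automatically since $f'$ is already a chain map. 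The only other requirement is that $f$ lift $g$, i.e.\ $H(f)=g$. Thus the theorem reduces to producing a single $\gamma$ satisfying both the boundary equation $\dff_A\gamma=h$ and the homology condition $H(f)=g$.

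First I would settle solvability of the boundary equation. Since $\dff_C\lambda=0$, $\dff_A\lambda'=0$ and $f'$ is a chain map, one has $\dff_A h=0$, so $h\colon C'\to Z(A)$; and the commutativity of the right-hand square of \eqref{homdiag}, which is exactly condition \eqref{homcond12}, gives $H(h)=0$, because for $c'\in C'=H(C')$
\[
[h\,c']=H(f')H(\lambda)\,c'-H(\lambda')f''(c')=0.
\]
As $C'$ is free abelian, Lemma \ref{liftinglemma} then yields a lift $\gamma_0\colon C'\to A$ with $\dff_A\gamma_0=h$, so that $f_0=\left(\begin{smallmatrix} f' & \gamma_0 \\ 0 & f''\end{smallmatrix}\right)$ is already a chain homomorphism making \eqref{dia:lem:ses} commute. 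What $\gamma_0$ does \emph{not} control is the homology map $H(f_0)$ it induces.

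The remaining and decisive step is to adjust $\gamma_0$ so that the induced map becomes exactly $g$. The available freedom is a gauge: replacing $\gamma_0$ by $\gamma_0+\sigma$ with $\sigma\colon C'\to Z(A)$ leaves $\dff_A\gamma$ unchanged (as $\dff_A\sigma=0$) and alters the induced homology map by $[(z,z')]\mapsto i_*[\sigma z']$, where $i_*$ is induced by the inclusion $A\hookrightarrow A\oplus A'$. By naturality of the long exact sequence, $H(f_0)$ sits in the same ladder \eqref{homdiag} as $g$, with identical side maps $H(f'),f''$ and connecting maps $H(\lambda),H(\lambda')$; hence $g-H(f_0)$ vanishes on $\ker j_*$ and has image in $\img i_*$, so it is captured through the cycle-projection $K=jZ(C\oplus C')\subseteq C'$. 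One then solves, over this piece, for $\sigma$ realizing the required correction. This is precisely the situation of Lemma \ref{lem:gamma}: its surjectivity hypothesis plays the role of the surjectivity of the projection $\rho\colon Z(A)\twoheadrightarrow H(A)$, and its free group $P''$ plays the role of the complemented free summand $M\cong K$ supplied by Lemma \ref{lem:kernels} (diagram \eqref{diagkernel12}). Feeding the cycle sequence of \eqref{diagkernel12} into Lemma \ref{lem:gamma} produces the corrected $\gamma$, and the resulting $f$ is a chain homomorphism with $H(f)=g$ for which \eqref{dia:lem:ses} is a morphism of short exact sequences.

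I expect the main obstacle to be exactly this reconciliation. The boundary equation $\dff_A\gamma=h$ is a constraint on all of $C'$, whereas the prescribed map $g$ only pins $\gamma$ down along the cycle-projection $K$, and $K$ is in general \emph{not} a direct summand of $C'$ (it is cut out by invariant factors). A naive attempt to extend the homology correction from $K$ to $C'$ runs into a divisibility obstruction; the point of passing, via Lemma \ref{lem:kernels}, to the isomorphic free summand $M$ and of invoking projectivity in Lemma \ref{lem:gamma} is precisely to bypass it. Everything else — the block form, the automatic commutativity of \eqref{dia:lem:ses}, and the exactness of its rows — is routine.
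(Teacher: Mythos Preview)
Your outline uses the same three lemmas as the paper and correctly isolates the governing equation $\dff_A\gamma=h$, but the order of operations is reversed, and that reversal leaves the decisive step unfinished.

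The paper does \emph{not} first produce a global $\gamma_0$ with $\dff_A\gamma_0=h$ and then correct it. Instead it conjugates by the automorphisms $r,r'$ built from the section maps of Lemma~\ref{lem:kernels}, so that in the new coordinates the cycles split literally as $Z(C)\oplus M$ and $Z(A)\oplus M'$ with $\bar\lambda|_M=0=\bar\lambda'|_{M'}$. Lemma~\ref{lem:gamma} is then applied to the cycle-to-homology cube to obtain $\bar\gamma|_M\colon M\to Z(A)$ realising $\bar g$ on cycles; only afterwards is Lemma~\ref{liftinglemma} invoked, on the complement $M^c$, to produce $\bar\gamma|_{M^c}$ satisfying $\dff_A\bar\gamma|_{M^c}=\bar h|_{M^c}$. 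Conjugating back gives $\gamma=-f'\sigma+\sigma'f''+\bar\gamma$, and the verification $\dff_A\gamma=h$ is done last.

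Your ordering --- first $\gamma_0$ via Lemma~\ref{liftinglemma}, then a correction $\sigma\colon C'\to Z(A)$ --- runs into exactly the divisibility obstruction you name but do not actually resolve. Feeding \eqref{diagkernel12} into Lemma~\ref{lem:gamma} returns a map on $Z(C)\oplus M$ in the \emph{split} coordinates, not a correction to your $\gamma_0$; the identification $Z(C\oplus C')\cong Z(C)\oplus M$ is through $r$, so the output lives in conjugated coordinates and does not combine with $\gamma_0$ in any evident way. Meanwhile the homology constraint pins $\sigma$ down only on $K=jZ(C\oplus C')$, which sits inside $M$ via the invariant-factor isomorphism $\omega$, and extending a prescribed $\sigma|_K$ to $C'$ with values in $Z(A)$ is precisely the step blocked by the $\omega_i$. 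Either you carry out the conjugation explicitly --- at which point $\gamma_0$ becomes redundant and you have reproduced the paper's proof --- or you must supply a separate argument that the required $\sigma$ extends, which your sketch does not do.
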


\begin{proof}
We construct a homomorphism $f\colon C\oplus C' \to A\oplus A'$ which satisfies two requirements: (i) $f$ is a chain homomorphism which fits into Diagram \eqref{dia:lem:ses}, and (ii) $f$ induces $g$, i.e. $g=H(f)$.
As in the proof of Lemma \ref{lem:kernels} define the subgroups $K=j Z(C\oplus C')$ and $K'=j  Z(A\oplus A')$, which are isomorphic to the complemented subgroups $M$ and $M'$ respectively.
Given the diagram in \eqref{diag7-12} and by
 the commutativity in \eqref{homdiag}, which implies \eqref{homcond12}, Lemma~\ref{lem:kernels} yields the  commutative  diagram in \eqref{diagkernel12}.
Since the rows in   Diagram \eqref{diagkernel12} are exact and both $M$ and $M'$ are free and thus projective\footnote{Both $M\subset C'$ and $M'\subset A'$ and $C'$ and $A'$ are free abelian groups. This implies that both $M$ and $M'$ are free abelian and thus projective.} the short exact sequences split as:
\begin{equation}\label{splitdiag}
    \begin{tikzcd}
0 \arrow[r] & Z(C) \arrow[r] \arrow[d, "f'"] & Z(C)\oplus M \arrow[r] 
& M \arrow[r] \arrow[d, "f''"] & 0 \\
0 \arrow[r] & Z(A) \arrow[r]                & Z(A)\oplus M' \arrow[r]                & M' \arrow[r]                & 0
\end{tikzcd}
\end{equation}
i.e. $Z(C\oplus C') \cong Z(C)\oplus M$ and $Z(A\oplus A') \cong Z(A)\oplus M'$. 
To be more precise, since $M$ is projective we have a section map $s\colon M\to Z(C\oplus C')$ and the diagram
\[
\begin{tikzcd}
            &                        &                             & M \arrow[ld,  "s"'] \arrow[d, "\id"] &   \\
0 \arrow[r] & Z(C) \arrow[r, "i", tail] & Z(C\oplus C') \arrow[r, "j", two heads] & M \arrow[r]                       & 0
\end{tikzcd}
\]
such that $j s = \id$. Moreover, $s$ is an isomorphism from $M$ to $s(M)$. By construction $Z(C\oplus C') = 
\ker j \oplus \image s = Z(C) \oplus s(M) \cong Z(C)\oplus M$. The homomorphism $s$ is given by the formula $z' \mapsto (\sigma z',z')$, for some $\sigma\colon M\to C$ such that $\dff_C \sigma z' +\lambda z'=0$, i.e. 
\begin{equation}
    \label{summid12}
    \dff_C\sigma = - \lambda,\quad \text{on~~} M.
\end{equation}
Summarizing we obtain:
\[
\begin{tikzcd}
            &                                         & Z(C) \oplus M \arrow[d, "\cong"',"r"] \arrow[rd, two heads]                     &             &   \\
0 \arrow[r] & Z(C) \arrow[r, "i", tail] \arrow[ru, tail] & Z(C) \oplus s(M) \arrow[r, "j", two heads] \ar[equal]{d} & M \arrow[r] & 0 \\
            &                                         & Z(C\oplus C')                                                          &             &  
\end{tikzcd}
\]
where  both rows are exact  and   the isomorphism $r$ is given by 
\begin{equation}
    \label{isoforK}
r = \begin{pmatrix} \id~ & ~\sigma \\ 0~ & ~\id\end{pmatrix}\colon Z(C)\oplus M \to Z(C\oplus C').
\end{equation}
Indeed, for $c\in Z(C)$ and $z'\in M$, $r(c,z') = (c+\sigma z',z')$ and $\dff_C(c+\sigma z') + \lambda z' = \dff_C \sigma z' + \lambda z'=0$. 
The map $r$ extends to an automorphism on $C\oplus C'$ by defining $\sigma\colon C'\to C$, where we use the fact that $C'=M\oplus M^c$ and define $\sigma|_{M^c}=0$.
The same analysis applies to $Z(A\oplus A')\cong Z(A)\oplus M'$, with  automorphism $r'$ and $\sigma'\colon A'\to A$ with $\sigma'|_{M'^c}=0$.
In order to apply Lemma \ref{lem:gamma} we conjugate the differentials $\dff_{C\oplus C'}$ and $\dff_{A\oplus A'}$:
\[
\bar \dff_{C\oplus C'} := r^{-1} \dff_{C\oplus C'} r,\quad \text{and}\quad \bar \dff_{A\oplus A'} := r'^{-1} \dff_{A\oplus A'} r'.
\]
A direct calculation shows that 
\[
\bar \dff_{C\oplus C'} = \begin{pmatrix} \dff_C & ~~\dff_C\sigma +\lambda\\ 0~ & ~0\end{pmatrix} = \begin{pmatrix} \dff_C & ~~\bar\lambda\\ 0~ & ~0\end{pmatrix} = \begin{pmatrix} \dff_C & ~0\\ 0~ & ~0\end{pmatrix}\quad\text{on $C\oplus M$},
\]
where $\bar\lambda= \dff_C \sigma+\lambda =0$ on $M$. Moreover,
 $\bar \dff_{C\oplus C'} = \dff_{C\oplus C'}$ on $C\oplus M^c$,  i.e. $\bar\lambda|_M=0$ and $\bar\lambda|_{M^c}=\lambda$.
 The same characterization applies to $\bar\dff_{A\oplus A'}$, with $\bar\lambda' = \dff_A\sigma'+\lambda'$.
 This yields $\bar g\colon H(C\oplus C') \to H(A\oplus A')$, where the homology are isomorphic to the homologies in Diagram
 \eqref{homdiag}. 
Now apply Lemma \ref{lem:gamma} with 
Diagrams   \eqref{homdiag} and \eqref{splitdiag} as input. This yields the commutatative diagram:
\begin{equation*}
    \begin{tikzcd}
                                                 & Z(C) \arrow[ld, "f'"'] \arrow[rr] \arrow[d, "~", no head] &                                                 & Z(C)\oplus M \arrow[rr] \arrow[d, "~", no head] \arrow[ld, "\bar f"']      &                                      & M \arrow[dd, "\epsilon''"] \arrow[ld, "f''"'] \\
Z(A) \arrow[dd, "\eta'"] \arrow[rr, phantom] \arrow[rr] & {} \arrow[d]                                          & Z(A)\oplus M' \arrow[rr] \arrow[d, "~", no head] \arrow[dd] & {} \arrow[d]                              & M' \arrow[d, "~", no head] \arrow[dd] &                                    \\
                                                 & H(C) \arrow[ld, "H(f')"] \arrow[r, "~", no head]             & {} \arrow[r]                                    & H(C\oplus C') \arrow[ld, "\bar g"] \arrow[r, "~", no head] & {} \arrow[r]                         & C'              \arrow[ld, "f''"]                     \\
H(A) \arrow[rr, "i"']                               &                                                       & H(A\oplus A') \arrow[rr, "~"']                              &                                           & A'                   &                                   
\end{tikzcd}
\end{equation*}
where the homomorphism 
$\bar f\colon Z(C)\oplus M \to Z(A)\oplus M'$ is given by
\[
\bar f|_{Z(C)\oplus M} =  \begin{pmatrix} f'~ & \bar\gamma|_M \\ 0~ & f''\end{pmatrix}\colon Z(C)\oplus M \to Z(A)\oplus M',
\]
for some homomorphism $\bar\gamma|_M\colon M\to Z(A)$, with $\dff_A \bar\gamma|_M=0$,
is a chain homomorphism, since the differentials on $Z(C)\oplus M$ and $Z(A)\oplus M'$ vanish.
Moreover, $\bar f$ is a lift for the homology map $\bar g\colon H(C\oplus C') \to H(A\oplus A')$.
Because $f'$ and $f''$ are defined on $C$ and $C'$ respectively it remains to extend $\bar \gamma|_M$ to all of $C'$ in order to construct a chain homomorphism $\bar f\colon C\oplus C'\to A\oplus A'$. 
As pointed out above $M$ is free and complemented with
$C'=M\oplus M^c$, and $M^c$ free and thus projective.
We now construct $\bar \gamma$ on $M^c$. 
As before we write $\bar h = f'\bar\lambda - \bar\lambda' f'' = h + f'\dff_C\sigma - \dff_A\sigma' f''$. Note that $\bar h|_M=0$.
By \eqref{dAhiszero},
$\dff_A \bar h=0$ and $H(\bar h)=0$, cf.\ Lem.\ \ref{lem:kernels}.
This applies  in particular to the restriction $\bar h|_{M^c}$. Since $M^c$ is projective we can apply Lemma \ref{liftinglemma}  (using that the differential on $C'$ and thus $M^c$ is zero) which yields the homomorphism
$\bar h|_{M^c}\colon M^c\to B(A)$ and a lift $\bar \gamma|_{M^c}\colon M^c \to A$ 
\[
    \begin{tikzcd}[column sep=large, row sep=large]
                                          & A \arrow[d, "\dff_A", two heads] \\
M^c \arrow[r, "\bar h|_{M^c}"'] \arrow[ru, "\bar \gamma|_{M^c}", dashed] & B(A)                   %
\end{tikzcd}
\]
such that
$\dff_A\bar\gamma|_{M^c} =\bar h|_{M^c}$ and thus $\dff_A\bar\gamma =\bar h$. 
The homomorphism $\bar\gamma$ is now defined on all of $C'$. 
This completes the construction of $\bar f\colon C\oplus C' \to A\oplus A'$.
The restriction to $Z(C)\oplus M$ is a lift for $\bar g$ and therefore $\bar f$ is a lift for
$\bar g$.
Verifying that $\bar f$ is a chain homomorphism is equivalent to verifying that $f= r' \bar f r^{-1}$ is a chain homomorphism.
Observe that $f$ is given by
\begin{equation}
    \label{formoff}
f= r' \bar f r^{-1} = \begin{pmatrix} f'~ & -f'\sigma+\sigma' f''+\bar\gamma \\ 0~ & f''\end{pmatrix}= \begin{pmatrix} f'~ & \gamma \\ 0~ & f''\end{pmatrix}\colon C\oplus C' \to A\oplus A'
\end{equation}
 With this choice of $f$ Diagram \eqref{dia:lem:ses} commutes regardless of the choice of $\gamma$.  
In order for $f$ to be a chain homomorphism we need $\dff_{A\oplus A'} f = f \dff_{C\oplus C'}$. The compositions are given by
\begin{align*}
\dff_{A\oplus A'} f = \begin{pmatrix}  \dff_A & \lambda' \\ 0 & 0\end{pmatrix} \begin{pmatrix} f' & \gamma \\ 0 & f''\end{pmatrix} = \begin{pmatrix} \dff_A f' & \dff_A\gamma + \lambda' f''\\ 0 & 0\end{pmatrix}
\\
f \dff_{C\oplus C'} = \begin{pmatrix} f' & \gamma \\ 0 & f''\end{pmatrix} \begin{pmatrix} \dff_C & \lambda \\ 0 & 0\end{pmatrix} = \begin{pmatrix}  f'\dff_C & f'\lambda\\ 0 & 0\end{pmatrix}
\end{align*}
This yields the condition:
\begin{equation}
    \label{condforhom}
\dff_A\gamma = f'\lambda - \lambda' f''=h.
\end{equation}
Now, $\dff_A\gamma = -\dff_A f' \sigma + \dff_A\sigma' f'' + \dff_A\bar\gamma = -f'\dff_C\sigma + \dff_A\sigma' f'' + \bar h$.
On $M$ we obtain $\dff_A\gamma = -f'\dff_C\sigma + \dff_A\sigma' f'' = f'\lambda-\lambda'f''=h$, and on $M^c$, $\dff_A \gamma =  -f'\dff_C\sigma + \dff_A\sigma' f'' + \bar h = h$, which realizes \eqref{condforhom}. By construction the chain homomorphism $f$ is a lift for $g$ which completes the proof.
\qed
\end{proof}

\begin{remark}
    \label{forchaincompl6}
    As for Lemma \ref{lem:gamma}, in the case of chain complexes
   Theorem \ref{lem:main} can be applied degree wise which yields homomorphism $f$ in every degree.
\end{remark}

Consider subgroups $C,C'\subset \overline C$ and $A,A'\subset \overline A$. The subgroups $C,C'$ and $A,A'$ are equipped with differentials $\dff_C, \dff_{C'}$ and  $\dff_A, \dff_{A'}$ respectively.
 The next result concerns the sums $C+C'$ and $A+A'$ as differential groups. 
\begin{theorem}\label{lem:main22}
Let $C,C'\subset \overline C$ and $A,A'\subset \overline A$ be subgroups.
Consider the following  diagram  with exact rows:
\begin{equation}\label{diag7-1244}
    \begin{tikzcd}[column sep=large, row sep=large]
0 \arrow[r] & C\cap C' \arrow[r, "\varrho"] \arrow[d, "\bar f"'] & C\times C' \arrow[d, "f'\times f''"']\arrow[r, "\varpi"] & C+C' \arrow[r] \arrow[d, "f"] & 0 \\
0 \arrow[r] & A\cap A' \arrow[r, "\varrho"]                 & A \times A' \arrow[r, "\varpi"] & A+A' \arrow[r]                & 0
\end{tikzcd}
\end{equation}
where $\varrho(c) = (c,-c)$ and $\varpi(c,c') = c+c'$. We equip
 $C,C'$ and $A,A'$  with differentials $\dff_C, \dff_{C'}$ and  $\dff_A, \dff_{A'}$ respectively. The product differentials are given by
\[
\dff_{C\times C'} = \dff_C\times \dff_{C'} = \begin{pmatrix} \dff_C & 0 \\ 0 & \dff_{C'}\end{pmatrix},\quad\text{and}\quad \dff_{A\times A'} = \dff_A\times \dff_{A'} = \begin{pmatrix} \dff_A & 0 \\ 0 & \dff_{A'}\end{pmatrix}.
\]
The homomorphisms $f'\colon C\to A$ and $f''\colon C'\to A'$ are 
chain homomorphisms with respect to $\dff_C$ and $\dff_A$, and $\dff_{C'}$ and $\dff_{A'}$ respectively. 
Then,
\begin{enumerate}
    \item [(i)] the maps $\varrho$ and $\varpi$ are chain homomorphisms if and only if
    $\dff_{C\cap C'}c = \dff_C c=\dff_{C'} c$ for all $c\in C\cap C'$ and $\dff_{C+C'} (c+c') := \dff_C c+\dff_{C'} c'$ for all $c\in C$ and all $c'\in C'$, and the same for $\dff_{A\cap A'}$ and $\dff_{A+A'}$;
    \item [(ii)] Diagram \eqref{diag7-1244} is commutative and the maps $\bar f$ and $f$ are chain homomorphisms if and only if
    $\bar f(c) = f'(c)=f''(c)$ for all $c\in C\cap C'$ and $f(c+c') := f'(c) + f''(c')$ for all $c\in C$ and all $c'\in C'$.
\end{enumerate}
Given the commutative diagrams in homology:
\begin{equation}
  \label{giveninduced}  
\begin{tikzcd}[column sep=large, row sep=large]
H(C) \arrow[r, "i"] \arrow[d, "H(f')"'] & H(C+C') \arrow[d, "g"] \\
H(A) \arrow[r, "i"]                 & H(A+A')               
\end{tikzcd}
\quad\quad
\begin{tikzcd}[column sep=large, row sep=large]
H(C') \arrow[r, "i"] \arrow[d, "H(f'')"'] & H(C+C') \arrow[d, "g"] \\
H(A') \arrow[r, "i"]                 & H(A+A')               
\end{tikzcd}
\end{equation}
where $i([c]_C)= [c]_{C+C'}$ and  $i([c']_{C'})= [c']_{C+C'}$, and analogously for $A,A'$ and $A+A'$. Then, 
\begin{enumerate}
    \item [(iii)] the chain homomorphism $f$ lifts $g$, i.e. $g=H(f)$.
\end{enumerate}
\end{theorem}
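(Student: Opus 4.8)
The plan is to read (iii) off from the chain-level commutativity already secured in (i)--(ii) and then pass to homology by functoriality. By part (ii) the map $f\colon C+C'\to A+A'$ given by $f(c+c')=f'(c)+f''(c')$ is a genuine chain homomorphism, so $H(f)\colon H(C+C')\to H(A+A')$ is defined. Moreover the entire right-hand square of \eqref{diag7-1244}, namely $f\varpi=\varpi\,(f'\times f'')$, commutes on the nose: both sides send $(c,c')$ to $f'(c)+f''(c')$. Equivalently, $f$ restricts to $f'$ and to $f''$ on the two summands (set $c'=0$, resp.\ $c=0$ in the defining formula).

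Next I would apply the homology functor to this square. Since the product differentials split, $H(C\times C')=H(C)\oplus H(C')$ and $\varpi_*$ is $([c],[c'])\mapsto i([c])+i([c'])$, and likewise over $A,A'$. Evaluating $H(f)\circ\varpi_*=\varpi_*\circ\bigl(H(f')\oplus H(f'')\bigr)$ on the summands $([c],0)$ and $(0,[c'])$ gives exactly $H(f)\circ i=i\circ H(f')$ on $H(C)$ and $H(f)\circ i=i\circ H(f'')$ on $H(C')$; that is, $H(f)$ makes both squares of \eqref{giveninduced} commute. Thus $H(f)$ and the given $g$ satisfy the same two relations, and writing $\psi:=\varpi_*$ these relations are jointly equivalent to $g\circ\psi=H(f)\circ\psi$. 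Consequently $g$ and $H(f)$ already coincide on $\image\psi$.

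The remaining step, which I expect to be the only real obstacle, is to promote agreement on $\image\psi$ to the full equality $g=H(f)$ on $H(C+C')$. For this I would invoke the Mayer--Vietoris exact triangle of $0\to C\cap C'\xrightarrow{\varrho}C\times C'\xrightarrow{\varpi}C+C'\to 0$: exactness gives $\image\psi=\ker\partial$, so the difference $g-H(f)$ is governed by the connecting homomorphism $\partial\colon H(C+C')\to H(C\cap C')$. Functoriality already supplies $\partial_A\circ H(f)=H(\bar f)\circ\partial$, using $\bar f=f'=f''$ on $C\cap C'$ from (ii). To close the argument one then needs that the given $g$ does not float freely but is the map occurring in the ambient commutative exact-triangle ladder from which \eqref{giveninduced} is extracted, whence $g$ obeys the matching relation $\partial_A\circ g=H(\bar f)\circ\partial$; the two relations together force $g-H(f)=0$. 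This determinacy of $g$ off $\image\psi$ is the delicate point: were $g$ constrained only by \eqref{giveninduced} it would be pinned down solely on $\ker\partial$, so the proof genuinely relies on the connecting-map compatibility furnished by the surrounding Cartan--Eilenberg structure. Everything else reduces to the routine diagram manipulations above.
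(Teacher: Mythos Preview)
The paper's proof of (iii) is precisely the direct computation in your first two paragraphs: it writes a class in $H(C+C')$ as $[c+c']_{C+C'}=[c]_{C+C'}+[c']_{C+C'}$, applies the two squares of \eqref{giveninduced} to the summands separately, and reassembles to get $g([c+c'])=[f'(c)]+[f''(c')]=[f(c+c')]=H(f)([c+c'])$. No Mayer--Vietoris, no connecting homomorphism---that three-line identity is the entire argument the paper gives.

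You are correct that this computation only pins down $g$ on $\image\varpi_*=\ker\partial$: the paper tacitly treats $[c]_{C+C'}$ and $[c']_{C+C'}$ as though $c$ and $c'$ were individually cycles, which amounts to assuming $\varpi_*$ is onto. So your diagnosis of the delicate point is sharper than the paper's own treatment. However, your proposed repair does not close the gap either. From functoriality you have $\partial_A\circ H(f)=H(\bar f)\circ\partial$, and you assume the ladder gives $\partial_A\circ g=H(\bar f)\circ\partial$; subtracting yields only $\partial_A\circ(g-H(f))=0$. Combined with $(g-H(f))|_{\ker\partial}=0$ this says merely that $g-H(f)$ factors as
\[
H(C+C')\twoheadrightarrow \image\partial \;\longrightarrow\; \ker\partial_A \hookrightarrow H(A+A'),
\]
and nothing in your two relations forces the middle arrow to vanish. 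The sentence ``the two relations together force $g-H(f)=0$'' is therefore an assertion, not an argument; one commuting square with the connecting map is not enough to recover $g$ outside $\ker\partial$.
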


\begin{proof}
Let us start with the observation that the sum of $f'$ and $f''$ is well-defined 
under the assumption that $f'=f''$ on $C\cap C'$. The same applies to
the sum of differentials $\dff_C$ and $\dff_{C'}$, and $\dff_A$ and $\dff_{A'}$.
Consider $c+c'=\bar c +\bar c'$, $c,\bar c\in C$ and $c',\bar c'\in C'$. This implies that $c-\bar c=c'-\bar c'\in C\cap C'$. We show that $f'(c)+f''(c')=f'(\bar c)+f''(\bar c')$. Consider,
\[
f'(c)-f'(\bar c) +f''(c')-f''(\bar c') = f'(c-\bar c)-f''(c'-\bar c')=0,
\]
since $c-\bar c=c'-\bar c'\in C\cap C'$ and $f'=f''$ on $C\cap C'$, which proves $f\colon C+C'\to A+A'$ is well-defined. 
Similarly, $\dff_{C+C'}\colon C+C'\to C+C'$ and $\dff_{A+A'}\colon A+A'\to A+A'$
are well-defined.

For the diagrams:
\[
\begin{tikzcd}[column sep=large, row sep=large]
C\cap C' \arrow[r, "\varrho"] \arrow[d, "\dff_{C\cap C'}"'] & C\times C' \arrow[d, "\dff_{C\times C'}"] \\
C\cap C' \arrow[r, "\varrho"]                 & C\times C'              
\end{tikzcd}
\quad\quad\begin{tikzcd}[column sep=large, row sep=large]
C\times C' \arrow[r, "\varpi"] \arrow[d, "\dff_{C\times C'}"'] & C+C' \arrow[d, "\dff_{C+C'}"] \\
C\times C' \arrow[r, "\varpi"]                 & C+C'             
\end{tikzcd}
\]
to commute we obtain $\dff_{C\times C'} (c,-c) = \bigl( \dff_C c,-\dff_{C'} c\bigr)
= \bigl( \dff_{C\cap C'} c, -\dff_{C\cap C'} c\bigr)$ for all $c\in C\cap C'$, which is equivalent to  
$\dff_{C\cap C'}=\dff_C=\dff_{C'}$. By the same token the second diagram yields
$\dff_{C+C'}(c+c') = \dff_C c+\dff_{C'} c'$ which is well-defined by our above observation. The same arguments apply to $\dff_A,\dff_{A'}$, $\dff_{A\cap A'}$ and  $\dff_{A+A'}$
 concluding the proof of (i).

Consider the diagrams:
\[
\begin{tikzcd}[column sep=large, row sep=large]
C\cap C' \arrow[r, "\varrho"] \arrow[d, "\bar f"'] & C\times C' \arrow[d, "f'\times f''"] \\
A\cap A' \arrow[r, "\varrho"]                 & A\times A'              
\end{tikzcd}
\quad\quad\begin{tikzcd}[column sep=large, row sep=large]
C\times C' \arrow[r, "\varpi"] \arrow[d, "f'\times f''"'] & C+C' \arrow[d, "f"] \\
A\times A' \arrow[r, "\varpi"]                 & A+A'             
\end{tikzcd}
\]
In order for the first diagram to commute we chase the diagram which yields the condition
$(f'\times f'') (c,-c) = \bigl( f'(c),-f''(c')\bigr)
= \bigl( \bar f(c),-\bar f(c)\bigr)$ for all $c\in C\cap C'$. This condition
 is equivalent to $\bar f(c) = f'(c)=f''(c)$ for all $c\in C\cap C'$. 
As for the second diagram we obtain $f(c+c') = f'(c)+f''(c')$ for all $c\in C$ and $c'\in C'$, and $f$ is well-defined by the above observation.
 It remains to show that $\bar f$ and $f$ satisfy the chain homomorphism property. Since $f'$ and $f''$ are chain homomorphisms, and the differential coincide on the intersections, $\bar f$ is a chain homomorphism. For $f$ we have
\[
\begin{aligned}
f\bigl( \dff_{C+C'}(c+c')\bigr) &=f\bigl( \dff_{C} c+\dff_{C'} c'\bigr)
= f'(\dff_C c) + f''(\dff_{C'} c')\\ 
&= \dff_{A} f'(c) + \dff_{A'} f''(c')
= \dff_{A+A'} \bigl(f'(c)+f''(c')\bigr)\\
&= \dff_{A+A'} f(c+c'),
\end{aligned}
\]
 proving (ii).

As for (iii) we argue as follows.
From \eqref{giveninduced} we have that $g\bigl([c]_{C+C'}\bigr) = [f'(c)]_{A+A'}$, $c\in C$ and $g\bigl([c']_{C+C'}\bigr) = [f''(c')]_{A+A'}$, $c'\in C'$. 
Consequently, 
\[
\begin{aligned}
g\bigl([c+c']_{C+C'}\bigr) &= g\bigl([c]_{C+C'}\bigr)+ g\bigl([c']_{C+C'}\bigr) = [f'(c)]_{A+A'}+[f''(c')]_{A+A'}\\
&= [f'(c)+f''(c')]_{A+A'}
= [f(c+c')]_{A+A'}\\
&= H(f)\bigl([c+c']_{C+C'}\bigr),
\end{aligned}
\]
which completes the proof.
    \qed
\end{proof}

\begin{remark}
    \label{forchaincompl7}
    As before Theorem \ref{lem:main22} can be carried out degree wise which is of particular interest for chain complexes.
\end{remark}

\begin{remark}
    The Lemma \ref{liftinglemma}, \ref{connhomchar12} and \ref{lem:gamma} hold for $R$-modules over any commutative ring $R$ and replacing free abelian with projective. Lemma \ref{lem:kernels} requires submodules of projective modules to be projective which dictates $R$ to be an hereditary ring.
    Theorem \ref{lem:main} can therefore also be stated for $R$-modules over an hereditary ring $R$ with free abelian replace by projective.
    Theorem \ref{lem:main22} holds for $R$-modules over any commutative ring $R$.
    Consequently, all of the above mentioned lemmas and theorems hold if we consider rings $R$ that are principal ideal domains. 
    An example of a principal ideal domain is a field $\F$ which implies that all of the above results remain valid for $\F$-vector spaces.
    If for example  $R=\Z$ we retrieve the category of abelian groups. More general principal ideals are for instance polynomial rings $\F[x]$ over a field $\F$.
\end{remark}

\begin{remark}
    \label{chaincomplexadd}
    Chain complexes are the primary sources for differential groups, especially in their applications to dynamical systems. As noted in Example \ref{chaincomplexex1}, a chain complex is a graded differential group with an off-diagonal differential. This characteristic ensures that the homology is also graded: $H(C)=\bigoplus_k H_k(C)$, where $H_k(C)=
    \ker \dff_k/\image \dff_{k+1}$. 
The chain maps for chain complexes are homogeneous and induce homogeneous homomorphisms on homology: $H(f) = \bigoplus H_k(f_k)$ with $H_k(f_k)\colon H_k(C)\to H_{k-1}(C)$. 
An $\sO(\sP)$-filtering is defined by $\alpha\mapsto F_\alpha C_k$ for all $k$ --- degree wise filtering --- which is a filtering on $C$, but not vice versa. Chain homomorphisms are assumed to be filtered degree wise: $f_k(F_\alpha C_k) \subset F_\alpha C'_k$ for all $k$.
The morphisms for the associated Cartan-Eilenberg systems are homogeneous with respect to the integer grading induced by the chain complexes.
The assertions of Theorems \ref{lem:main} and \ref{lem:main22} remain valid if the differential groups are chain complexes.
 The techniques used in the proofs of Theorems \ref{lem:main} and \ref{lem:main22} allow us to construct the chain maps separately in each degree. 
 In the previous paragraphs we indicated the restriction to chain complexes in Remarks \ref{forchaincompl1}, \ref{forchaincompl2}, \ref{forchaincompl4}, \ref{forchaincompl5}, \ref{forchaincompl6} and \ref{forchaincompl7}.
 For simplicity of exposition, we do not use the specific grading of chain complexes.
\end{remark}

\section{Properties of excisive Cartan-Eilenberg systems}
\label{PropsCES}
In Section \ref{intronew} we introduced the notion of Cartan-Eilenberg system over a general poset. In this section we will provide a formal definition and derive   some essential properties needed to prove the main theorem. 

\subsection{A categorical definition}
\label{catdefn}
Let $(\sQ,\le)$ be a poset, or more generally a pre-order. We may regard $\sQ$ as small (thin) category, denoted $\IIonei$, where the objects in the category are the elements in $\sQ$ and the order relations $\alpha\le \beta$ account for the morphisms, or arrows, i.e. $\alpha\le \beta$ yields the arrow $\alpha \to \beta$. 
The \emph{arrow category} of $\sQ$ consists of pairs $(\alpha,\beta)$, with $\alpha\le \beta$, 
and unique morphisms $(\alpha,\beta) \to (\gamma,\delta)$ for $\alpha\le \gamma$ and 
$\beta\le \delta$,
and is denoted by $\IIi$. The latter corresponds to commutative diagrams in $\sQ$. The (functor) categories $\IIni$ consist of concatenations of $n-1$ arrows and are given by $n$-tuples\footnote{The entries $\alpha_i$ in $(\alpha_1,\cdots,\alpha_{n})$ satisfy $\alpha_1\le\cdots\le\alpha_{n}$ and are not necessarily distinct elements of $\sQ$.} $(\alpha_1,\cdots,\alpha_{n}) \in \sQ\times\cdots\times \sQ$, $\alpha_i\in \sQ$, with the additional requirement that
$\alpha_1\le\cdots\le\alpha_{n}$.\footnote{The notation $\IIni$ incates the functor category of functors $F\colon {\mathbf n} \to \sQ$, where ${\mathbf n}$ is the category $\bullet\to\bullet\to\cdots\to\bullet\to\bullet$ consisting of $n+1$ objects and $n$ arrows. The objects are identified with nested $n$-tuples in $\sQ$ and the morphisms are given by the induced order on $n$-tuples.}
An object in $\IIni$ may also be regarded as a path of length $n$ in the directed graph induced by the poset $\sQ$. 
Following \cite{HelleRognes} we consider the covariant functors $\spi_0$, $\spi_1$ and $\spi_2$ acting from $\IIthreei$ to $\IIi$,
given by 
\[
(\alpha,\beta,\gamma) \xmapsto{\spi_0} (\alpha,\beta),\quad (\alpha,\beta,\gamma) \xmapsto{\spi_1} (\alpha,\gamma),\text{~~and~~} (\alpha,\beta,\gamma) \xmapsto{\spi_2} (\beta,\gamma),
\]
respectively, and  the natural transformations 
$\imath\colon \spi_0\Rightarrow\spi_1$ and
$\jmath\colon \spi_1\Rightarrow\spi_2$ 
whose components 
are given by
\[
(\alpha,\beta) \xmapsto{\imath} (\alpha,\gamma) \text{~~and~~}
(\alpha,\gamma) \xmapsto{\jmath} (\beta,\gamma),
\]
respectively.
\begin{definition}
    \label{CEsys}
A \emph{Cartan-Eilenberg system}\footnote{Cartain-Eilenberg systems can be formulated in any abelian category such as $R$-modules or $\F$-vector spaces.} over a poset $\sQ$ consists of a covariant functor $\sE\colon \IIi\to \sRmod$   and a natural transformation $\sk\colon \sE \spi_2 \Rightarrow \sE\spi_0$ between the composite functors $\sE\spi_2$ and $\sE\spi_0$,
called the \emph{connecting homomorphism},  
such that 
\[
\begin{tikzcd}[column sep=small]
\sE \spi_0 \arrow[rr, "\sE\imath", Rightarrow] &                               & \sE\spi_1\arrow[ld, "\sE\jmath", Rightarrow] \\
                              & \sE\spi_2 \arrow[lu, "\sk", Rightarrow] &                              
\end{tikzcd}
\]
is an exact triangle, where the natural transformations $\sE\imath$ and $\sE\jmath$ are the right whiskerings of $\sE$ and $\imath$, and $\sE$ and $\jmath$ respectively.
A Cartan-Eilenberg system over $\sQ$ is denoted by $\bfE = \bigl(\IIi,\sE,\sk\bigr)$.
\end{definition}

\begin{remark}
    In \cite{CE} the poset $\sQ$ are the extended integers. The above definition is based on the extension in \cite{HelleRognes} for linearly ordered posets. 
\end{remark}

Morphism between Cartan-Eilenberg  systems  $\bfE$ and $\bfE'$ are given as natural transformations $\sh\colon \sE \Rightarrow \sE'$ between the functors $\sE$ and $\sE'$ such that 
\begin{equation}
    \label{nattransform12}
\begin{tikzcd}
\sE\spi_2 \arrow[r, "\sk", Rightarrow] \arrow[d, "\sh\spi_2"', Rightarrow] & \sE\spi_0 \arrow[d, "\sh\spi_0", Rightarrow] \\
\sE'\spi_2 \arrow[r, "\sk'", Rightarrow]                             & \sE'\spi_0                           
\end{tikzcd}
\end{equation}
is commutative, where $\sh\spi_2$ and $\sh\spi_1$ are the left whiskerings of $\sh$ and $\spi_2$ and $\spi_0$ respectively.
The components are given in \eqref{dia:exacttriangles}.

\subsection{Incomparable convex sets}
\label{relexsys}
We restrict here to excisive Cartan-Eilenberg systems over a finite distributive lattice $\sQ=\sO(\sP)$, where $\sP$ is a finite poset.
Recall that a Cartan-Eilenberg system is excisive if $\ell\colon E_{\alpha\cap\beta}^\alpha \xrightarrow[]{\cong} E_\beta^{\alpha\cup\beta}$ is an isomorphism for all $\alpha,\beta\in \sO(\sP)$.
\begin{lemma}
    \label{execiso}
    Let $(\alpha,\beta),(\alpha',\beta')\in \IIi$ with $\beta\smin\alpha=\beta'\smin\alpha'$.   Then,
    $E^\beta_\alpha \cong E^{\beta'}_{\alpha'}$.
\end{lemma}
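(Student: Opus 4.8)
The plan is to reduce every admissible pair to a single canonical representative determined only by the convex set $S := \beta\smin\alpha = \beta'\smin\alpha'$, and to realize each reduction by a single application of the meet--join excision isomorphism from Section \ref{relexsys}. First I would record that $S$ is convex in $\sP$: since $\alpha\subseteq\beta$ are down-sets, if $x\le y\le z$ with $x,z\in S$ then $y\in\beta$ (down-set), while $y\in\alpha$ would force $x\in\alpha$; hence $y\in S$. The canonical representative I would attach to $S$ is the pair $\bigl(\downarrow\! S\smin S,\ \downarrow\! S\bigr)$, where $\downarrow\! S$ denotes the down-closure of $S$, which is a genuine down-set and hence an element of $\sO(\sP)$.

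The technical core is the pair of set-theoretic identities
\[
\downarrow\! S\ \cap\ \alpha \;=\; \downarrow\! S\smin S,
\qquad
\downarrow\! S\ \cup\ \alpha \;=\; \beta .
\]
For the first, $S\cap\alpha=\varnothing$, and any $x\in\downarrow\! S\smin S$ lies below some $y\in S\subseteq\beta$, so $x\in\beta$ by the down-set property and $x\notin S$ then forces $x\in\alpha$; conversely $\downarrow\! S\cap\alpha$ is contained in $\downarrow\! S$ and disjoint from $S$. For the second, $\downarrow\! S\subseteq\beta$ because $\beta$ is a down-set containing $S$, and $\beta=\alpha\cup S\subseteq\alpha\cup\downarrow\! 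S\subseteq\beta$.

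With these identities in hand I would apply the excision isomorphism $E^{p}_{p\cap q}\xrightarrow{\ \cong\ }E^{p\cup q}_{q}$ to the choice $p=\downarrow\! S$, $q=\alpha$. The identities compute $p\cap q=\downarrow\! S\smin S$ and $p\cup q=\beta$, giving $E^{\downarrow S}_{\downarrow S\smin S}\cong E^\beta_\alpha$. Since the canonical pair depends only on $S$, the identical computation applied to $(\alpha',\beta')$ yields $E^{\downarrow S}_{\downarrow S\smin S}\cong E^{\beta'}_{\alpha'}$, and composing the two isomorphisms establishes $E^\beta_\alpha\cong E^{\beta'}_{\alpha'}$.

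I do not anticipate a genuine obstacle: once the correct canonical pair is identified, the result follows from a single invocation of excision for each side. The only step demanding care is the verification of the two displayed lattice identities, which uses simultaneously that $\alpha,\beta$ are down-sets and that $S$ is convex; in particular the inclusion $\downarrow\! S\smin S\subseteq\alpha$ is precisely what makes the meet and join collapse onto the canonical term, and is the place where a careless argument would break down.
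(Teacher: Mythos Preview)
Your proof is correct but takes a different route from the paper. The paper constructs an \emph{upper} common representative: it sets $\tilde\alpha=\alpha\cup\alpha'$, $\tilde\beta=\beta\cup\beta'$, verifies that $\tilde\beta\smin\tilde\alpha=S$ together with $\tilde\alpha\cap\beta=\alpha$ and $\tilde\alpha\cup\beta=\tilde\beta$, and then applies excision once on each side to route both $E^\beta_\alpha$ and $E^{\beta'}_{\alpha'}$ through $E^{\tilde\beta}_{\tilde\alpha}$. You instead route through a \emph{lower} canonical representative $(\downarrow\! S\smin S,\downarrow\! S)$ that depends only on $S$ and not on the two input pairs. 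Both arguments hinge on a single excision step per side; the set-theoretic verification is of comparable difficulty in each case. Your choice has the minor conceptual advantage that the intermediate term is intrinsic to the convex set $S$, whereas the paper's intermediate depends on the particular pair $(\alpha,\beta),(\alpha',\beta')$ being compared; on the other hand, the paper's join construction stays entirely within the lattice operations without needing to introduce the auxiliary down-closure $\downarrow\! S$.
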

\begin{proof}
    Define $\tilde\alpha=\alpha\vee\alpha'$ and $\tilde\beta=\beta\vee\beta'$. Then,
$\alpha,\alpha'\le\tilde\alpha$, $\beta,\beta'\le \tilde\beta$ and 
$\beta\smin\alpha=\beta'\smin\alpha'=\tilde\beta\smin\tilde\alpha$. 
Observe that $(\beta\cap\tilde\beta)\smin (\alpha\cup\tilde\alpha)=\beta\smin \tilde\alpha = 
\tilde\beta\smin\tilde\alpha$. Consequently,  $(\beta\cup\tilde\alpha)\smin\tilde\alpha = \tilde\beta\smin\tilde\alpha$ and therefore $\tilde\beta = \beta\cup\tilde\alpha$. Similarly, $\beta\smin(\beta\cap\tilde\alpha) = \beta\smin\alpha$ which implies that $\alpha = \beta\cap \tilde\alpha$.
By the excisive property we obtain
$
E^\beta_\alpha = E^\beta_{\tilde\alpha\cap \beta} \xrightarrow[]{\ell}  E^{\tilde\alpha\cup\beta}_{\tilde\alpha}
= E^{\tilde\beta}_{\tilde\alpha},
$
By the same token one proves that 
$E^{\beta'}_{\alpha'} = E^{\beta'}_{\tilde\alpha\cap \beta'} \xrightarrow[]{\ell'}  E^{\tilde\alpha\cup\beta'}_{\tilde\alpha}
= E^{\tilde\beta}_{\tilde\alpha},
$ and thus $(\ell')^{-1}\ell\colon E^\beta_\alpha \xrightarrow[]{\cong} E^{\beta'}_{\alpha'}$.
\qed
\end{proof}
In an excisive Cartan-Eilenberg  system  the isomorphisms between $E$-terms yield additional relations on the maps $i$ and $j$.
\begin{lemma}
    \label{idforincomp}
    Consider non-trivial triples
 $(\alpha,\beta,\gamma), (\alpha,\beta',\gamma)$, i.e. $\alpha\subsetneq \gamma$,  such that $\beta\smin\alpha = \gamma\smin\beta'$  and $\beta'\smin\alpha = \gamma\smin\beta$. 
    Then, in the exact triangles
 \begin{equation}
    \label{exact3abc}
    \begin{tikzcd}[column sep=0.5cm]
E^{\beta}_{\alpha} \arrow[rr, "i"] && E^{\gamma}_{\alpha} \arrow[ld, "j"'] \\
& E^{\gamma}_{\beta} \arrow[lu, "k"']
\end{tikzcd}
\qquad
\begin{tikzcd}[column sep=0.5cm]
E^{\beta'}_{\alpha} \arrow[rr, "i'"] && E^{\gamma}_{\alpha} \arrow[ld, "j'"'] \\
& E^{\gamma}_{\beta'} \arrow[lu, "k'"']
\end{tikzcd}
\end{equation}
the homomorphisms $i,i'$ and $j,j'$
satisfy: $\ell^{-1}j'i=\id$ on $E_\alpha^\beta$ and $ji'\ell'^{-1}=\id$ on $E_\beta^\gamma$, where
$\ell\colon E^{\beta}_{\alpha}\to E^{\gamma}_{\beta'}$ and $\ell'\colon E^{\beta'}_{\alpha} \to E^{\gamma}_{\beta}$ are the isomorphisms by the excisive property.
\end{lemma}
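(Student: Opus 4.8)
The plan is to reduce both identities to functoriality of the defining functor $\sE$, after recognizing $\ell$ and $\ell'$ as the canonical excisive functor maps. First I would record the lattice identities hidden in the hypotheses. Since $\alpha\subset\beta,\beta'\subset\gamma$ are down-sets, the assumptions $\beta\smin\alpha=\gamma\smin\beta'$ and $\beta'\smin\alpha=\gamma\smin\beta$ say exactly that $\alpha$, $\beta\smin\alpha$ and $\beta'\smin\alpha$ are pairwise disjoint and partition $\gamma$. In particular I claim $\beta\cap\beta'=\alpha$ and $\beta\cup\beta'=\gamma$: for the join, $\gamma\smin(\beta\cup\beta')=(\gamma\smin\beta)\cap(\gamma\smin\beta')$, and as $\gamma\smin\beta'=\beta\smin\alpha\subset\beta$ while the first factor is disjoint from $\beta$, the intersection is empty, giving $\gamma=\beta\cup\beta'$; for the meet, $(\beta\cap\beta')\smin\alpha=(\beta\smin\alpha)\cap(\beta'\smin\alpha)$, and as $\beta\smin\alpha=\gamma\smin\beta'$ is disjoint from $\beta'\supset\beta'\smin\alpha$, this is empty, giving $\beta\cap\beta'=\alpha$.

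With these identities the excisive property, in its canonical form $\ell\colon E^{\delta}_{\delta\cap\epsilon}\xrightarrow{\cong}E^{\delta\cup\epsilon}_{\epsilon}$, applies directly. Taking $\delta=\beta$, $\epsilon=\beta'$ gives an isomorphism $E^\beta_\alpha\xrightarrow{\cong}E^\gamma_{\beta'}$, which is precisely the value of $\sE$ on the unique arrow $(\alpha,\beta)\to(\beta',\gamma)$ of the arrow category $\IIi$; this is the map $\ell$ of the statement. Taking $\delta=\beta'$, $\epsilon=\beta$ gives $E^{\beta'}_\alpha\xrightarrow{\cong}E^\gamma_\beta$, the value of $\sE$ on the arrow $(\alpha,\beta')\to(\beta,\gamma)$, which is $\ell'$. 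By the $i$/$j$ conventions, $i,j,i',j'$ are likewise the images under $\sE$ of the arrows $(\alpha,\beta)\to(\alpha,\gamma)$, $(\alpha,\gamma)\to(\beta,\gamma)$, $(\alpha,\beta')\to(\alpha,\gamma)$ and $(\alpha,\gamma)\to(\beta',\gamma)$.

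The identities then drop out of functoriality. Because $\IIi$ is a thin category, the composite $(\alpha,\beta)\to(\alpha,\gamma)\to(\beta',\gamma)$ coincides with the unique arrow $(\alpha,\beta)\to(\beta',\gamma)$; applying $\sE$ yields $j'i=\ell$, hence $\ell^{-1}j'i=\id$ on $E^\beta_\alpha$. Symmetrically, $(\alpha,\beta')\to(\alpha,\gamma)\to(\beta,\gamma)$ is the unique arrow $(\alpha,\beta')\to(\beta,\gamma)$, so $ji'=\ell'$ and therefore $ji'\ell'^{-1}=\id$ on $E^\gamma_\beta$. The real content sits entirely in the first paragraph's bookkeeping; the only point requiring care is confirming that the isomorphisms $\ell$ and $\ell'$ furnished by the excisive property are these direct functor maps and not the longer composites built through a common upper bound as in Lemma \ref{execiso}. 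The identities $\beta\cap\beta'=\alpha$ and $\beta\cup\beta'=\gamma$ guarantee the direct maps exist, and they agree with the Lemma \ref{execiso} construction by uniqueness of morphisms in $\IIi$ together with functoriality, so no ambiguity remains.
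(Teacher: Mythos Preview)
Your proof is correct and follows essentially the same route as the paper: both arguments identify $j'i$ and $ji'$ with the excisive maps $\ell$ and $\ell'$ by functoriality of $\sE$ on the thin arrow category $\IIi$. Your version is more explicit, deriving the lattice identities $\beta\cap\beta'=\alpha$ and $\beta\cup\beta'=\gamma$ up front (the paper records these separately in Remark~\ref{convexpara}) and carefully checking that the excisive isomorphisms coincide with the direct functor maps, but the core mechanism is identical.
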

\begin{proof}
An excisive Cartan-Eilenberg system yields the isomorphisms $\ell\colon E^{\beta}_{\alpha}\to E^{\gamma}_{\beta'}$ and $\ell'\colon E^{\gamma}_{\beta} \to E^{\beta'}_{\alpha}$.
By the definition of the functor $\sE$ and the categorical interpretation ordered pairs, $j'i$ is the 
homomorphism $\ell\colon E^{\beta}_{\alpha}\to E^{\gamma}_{\beta'}$, which
 implies that $j'  i$ is an isomorphism and 
 $\ell^{-1}j'i = \id$ on $E^\beta_\alpha$.
The same applies to isomorphism $\ell'\colon  E^{\beta'}_{\alpha}\to E^{\gamma}_{\beta}$
and $j  i' {\ell'}^{-1} = \id$ on $E^{\gamma}_\beta$.
 \qed
\end{proof}
\begin{remark}
 \label{convexpara}   
 For non-trivial triples $(\alpha,\beta,\gamma), (\alpha,\beta',\gamma)$, such that $\beta\smin\alpha = \gamma\smin\beta'$  and $\beta'\smin\alpha = \gamma\smin\beta$,
the convex sets $\xi=\beta\smin \alpha$ and $\eta=\gamma\smin\beta$ are \emph{incomparable}, or \emph{parallel}, i.e. $\xi$ and $\eta$ are incomparable with respect induced order by $\sP$.
They satisfy the additional property that
$\xi\cap\eta=\varnothing$ and $\xi\cup\eta=\zeta=\gamma\smin\alpha$, as well as $\beta\cap \beta'=\alpha$ and $\beta\cup \beta' = \gamma$. In pariticular, $\beta\neq \beta'$.
\end{remark}

    For the $E$-terms of the
     triples 
$(\alpha,\beta,\gamma), (\alpha,\beta',\gamma)$ in Lemma \ref{idforincomp} 
the homomorphisms $i,i'$ are injective and the homomorphisms $j,j'$ are surjective. In particular, using Lemma \ref{idforincomp},
\[
\begin{tikzcd}
0 \arrow[r] & E_\alpha^\beta \arrow[r, "i", tail] & E^\gamma_\alpha \arrow[r, "j", two heads] & E^\gamma_\beta \arrow[r] \arrow[l, "i'\ell'^{-1}", tail, bend left, shift left=1] & 0,
\end{tikzcd}
\]
which shows that the exact sequence is right split and thus a split exact sequence with $E^\gamma_\alpha \cong E^\beta_\alpha \oplus E^\gamma_\beta$.
The same construction follows by using the other triple.
The connecting homomorphisms $k$ and $k'$ are trivial proving that the exact triangle in \eqref{exact3abc} are in fact split exact sequences.
These consideration hold for excisive Cartan-Eilenberg systems in general.
If we consider Cartan-Eilenberg systems generated by a $\sP$-graded differential group we can derive properties of the differential for the above case of triples describing incomparable convex sets.
As before consider triples
 $(\alpha,\beta,\gamma), (\alpha,\beta',\gamma)$, such that $\beta\smin\alpha = \gamma\smin\beta'$  and $\beta'\smin\alpha = \gamma\smin\beta$.
Let $C=\bigoplus_{p\in \sP} G_p C$, then \eqref{firstexactseq13} yields the exact sequences
\begin{equation}
    \label{twoexactseq}
\begin{tikzcd}
            & G_{\gamma\smin\beta'} C \ar[equal]{d}                 & G_{\beta\smin\alpha} C \oplus G_{\gamma\smin\beta} C \ar[equal]{d}                 &             &   \\
0 \arrow[r] & G_{\beta\smin \alpha}C \arrow[r, "i"]                                 & G_{\gamma\smin\alpha} C \arrow[r, "j"] \ar[equal]{d}  & G_{\gamma\smin\beta} C \arrow[r] & 0 \\
0 \arrow[r] & G_{\beta'\smin\alpha} C \arrow[r, "i'"] \ar[equal]{d} & G_{\gamma\smin\alpha} C \arrow[r, "j'"] \ar[equal]{d} & G_{\gamma\smin\beta'} C \arrow[r] & 0 \\
            & G_{\gamma\smin\beta} C                                                & G_{\beta'\smin\alpha} C \oplus G_{\gamma\smin\beta'} C                                                &             &  
\end{tikzcd}
\end{equation}
Since $i,i'$ are chain homomorphisms by the assumption that the differential $\dff_C$ is $\sO(\sP)$-filtered we obtain the following splitting lemma for the differential $\dff_C$.
\begin{lemma}
    \label{splitdiff}
    Under the assumption that $\beta\smin\alpha = \gamma\smin\beta'$  and $\beta'\smin\alpha = \gamma\smin\beta$
    the differential $\dff_C\colon G_{\gamma\smin\alpha} C \to G_{\gamma\smin\alpha} C$ is given by:
    \begin{equation}
    \label{splitdiff2}
 \dff_{G_{\gamma\smin\alpha} C} = \begin{pmatrix} \dff_{G_{\beta\smin\alpha} C} & 0 \\ 0~ & \dff_{G_{\gamma\smin\beta} C}\end{pmatrix}\colon G_{\beta\smin\alpha} C \oplus G_{\gamma\smin\beta} \to G_{\beta\smin\alpha} C \oplus G_{\gamma\smin\beta},
\end{equation}
where $\dff_{G_{\beta\smin\alpha} C}$ and $\dff_{G_{\gamma\smin\beta} C}$ are the restrictions of $\dff_C$ to
$G_{\beta\smin\alpha} C$ and $G_{\gamma\smin\beta}$ respectively.
\end{lemma}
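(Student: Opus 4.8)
The plan is to read the claimed diagonal form off the two short exact sequences in \eqref{twoexactseq}, each of which exhibits $G_{\gamma\smin\alpha}C$ as an extension, and to extract from each one the vanishing of a single off-diagonal block. Set $\xi=\beta\smin\alpha$ and $\eta=\gamma\smin\beta$. By Remark \ref{convexpara} these are disjoint with $\xi\cup\eta=\gamma\smin\alpha$, so as graded groups we have the internal direct sum $G_{\gamma\smin\alpha}C=G_{\beta\smin\alpha}C\oplus G_{\gamma\smin\beta}C$. Writing the induced differential with respect to this decomposition as $\dff_{G_{\gamma\smin\alpha}C}=\bigl(\begin{smallmatrix} a & b\\ c & d\end{smallmatrix}\bigr)$, the assertion is precisely that $b=c=0$ and that $a,d$ are the advertised restrictions.

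First I would use the top short exact sequence in \eqref{twoexactseq} to kill the lower-left block. The map $i$ there is induced by the inclusion $F_\beta C\hookrightarrow F_\gamma C$; since $\dff_C$ is $\sO(\sP)$-filtered it descends compatibly to the quotients, so $i$ is a chain homomorphism identifying $G_{\beta\smin\alpha}C$ with the first summand. Being a chain map forces $\dff_{G_{\gamma\smin\alpha}C}$ to carry that summand into itself, i.e. $c=0$ and $a=\dff_{G_{\beta\smin\alpha}C}$. The structural reason is that for $q\in\xi\subset\beta$ the filtered property only allows nontrivial components $\dff_C\colon G_qC\to G_pC$ with $p\le q$, and because $\beta$ is a down-set any such $p$ lies in $\beta$, hence outside $\eta$.

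Next I would run the same argument for the bottom sequence to kill the upper-right block, the one genuinely new ingredient being the identification $G_{\beta'\smin\alpha}C=G_{\gamma\smin\beta}C$, valid because $\beta'\smin\alpha=\eta$. Thus $i'$ embeds this group as exactly the second summand of $G_{\gamma\smin\alpha}C$, and since $\beta'$ is again a down-set the filtered property traps $\dff_C(G_qC)$ in $\beta'$ for $q\in\eta\subset\beta'$, so $\dff_{G_{\gamma\smin\alpha}C}$ preserves the second summand. Hence $b=0$ and $d=\dff_{G_{\gamma\smin\beta}C}$.

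Combining $b=c=0$ gives the diagonal form \eqref{splitdiff2} with the diagonal blocks equal to the restrictions of $\dff_C$. I expect the only real subtlety to be bookkeeping: one must keep straight that the single decomposition $G_{\beta\smin\alpha}C\oplus G_{\gamma\smin\beta}C$ is simultaneously compatible with two different filtration-slicings—via $\beta$, which traps the first summand, and via $\beta'$, which traps the second—and in particular that $i'$ really lands in the second summand rather than in some skew complement. Once this compatibility is pinned down, each block vanishing is immediate from the down-set property of $\beta$ and of $\beta'$.
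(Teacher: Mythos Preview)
Your proposal is correct and follows essentially the same route as the paper: both arguments apply the chain-map condition for the inclusions $i$ and $i'$ in the two short exact sequences of \eqref{twoexactseq} to kill one off-diagonal block each, with the identification $G_{\beta'\smin\alpha}C=G_{\gamma\smin\beta}C$ swapping the roles of the summands. The paper's proof simply invokes ``the first part of the proof of Lemma \ref{connhomchar12}'' for this, whereas you spell out the down-set reasoning behind the filtered condition explicitly.
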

\begin{proof}
If we apply the first part of the proof of Lemma \ref{connhomchar12} to both exact sequences in \eqref{twoexactseq} we obtain that the off diagonal terms are zero completing the proof.
    \qed
\end{proof}
\begin{remark}
    For Cartan-Eilenberg systems generated by $\sP$-graded differential groups the conclusion of Lemma \ref{idforincomp} follows immediately since $ji'=\id$ and $j'i=\id$ as indicated in Diagram \eqref{twoexactseq}. The direct sum decomposition is immediate from the form of the differential.
\end{remark}

\section{Strict representations}
\label{strictrepres}
Two $\sO(\sP)$-filtered chain homomorphisms $f,g\colon (C,\dff_C)\to (A,\dff_A)$ are said to be { $\sO(\sP)$-filtered chain homotopic} if there is an $\sO(\sP)$-filtered homomorphism $h\colon C\to A$ such that $f-g = h \dff_C +\dff_A h$; such a map $h$ is called an {$\sO(\sP)$-filtered chain homotopy} from $f$ to $g$.  Notation: $f'\sim f$. 
In this case  $(C,\dff_C)$ and $(A,\dff_A)$ are \emph{$\sO(\sP)$-filtered chain homotopic}.
Notation: $(C,\dff_C) \sim (A,\dff_A)$.
 An $\sO(\sP)$-filtered chain homomorphism $f\colon (C,\dff_C)\to (A,\dff_A)$ is an {$\sO(\sP)$-filtered chain (homotopy) equivalence} if there is an $\sO(\sP)$-filtered chain homomorphism $f'\colon ( A,\dff_A)\to (C,\dff_C)$ such that $f' f\sim \id_C$ and $f f'\sim \id_{A}$. 
 In this case  $(C,\dff_C)$ and $(A,\dff_A)$ are \emph{$\sO(\sP)$-filtered chain equivalent}.
Notation: $(C,\dff_C) \simeq (A,\dff_A)$. 
Two differential groups  $(C,\dff_C)$ and $(A,\dff_A)$ are \emph{$\sO(\sP)$-filtered chain isomorpic} if there exist 
$\sO(\sP)$-filtered chain homomorphisms 
 $f\colon ( C,\dff_C)\to (A,\dff_A)$ and 
$f'\colon ( A,\dff_A)\to (C,\dff_C)$ such that $f'f=\id_C$ and $ff'=\id_A$.
In this case $f$ is called an $\sO(\sP)$-filtered chain isomorphism.
 Notation: $(C,\dff_C) \cong (A,\dff_A)$.

In this section we establish the fundamental equivalence result in the setting of strict, $\sP$-graded differential groups.
A $\sP$-graded group $C = \oplus_{p\in \sP} G_pC$ is free if and only if the groups $G_p C$ are free for all $p\in \sP$. 

\begin{theorem}\label{thm:equivalence}
Two 
free, finitely generated, strict, $\sP$-graded differential groups $(C,\dff_C)$ and $(A,\dff_A)$  are $\sO(\sP)$-filtered chain isomorphic
if and only if $\bfE(C,\dff_C) \cong \bfE(A,\dff_A)$. 
\end{theorem}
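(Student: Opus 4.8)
The plan is to prove the two implications separately; the forward direction is routine and the converse is the substance, proved by an induction over the lattice $\sO(\sP)$ that assembles the chain isomorphism out of its graded pieces.

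First I would dispose of the forward implication. If $f\colon(C,\dff_C)\to(A,\dff_A)$ is an $\sO(\sP)$-filtered chain isomorphism, then $f(F_\alpha C)=F_\alpha A$ for all $\alpha$, so $f$ descends to chain isomorphisms $F_\beta C/F_\alpha C\xrightarrow{\cong}F_\beta A/F_\alpha A$ for every $\alpha\subset\beta$ and hence to isomorphisms $h^\beta_\alpha\colon E^\beta_\alpha\xrightarrow{\cong}E'^\beta_\alpha$ on homology. Naturality of the connecting homomorphism $\sk$ of \eqref{firstexactseq13} with respect to the chain map $f$ makes the squares \eqref{nattransform12} commute, so $h=\{h^\beta_\alpha\}$ is an isomorphism $\bfE(C,\dff_C)\to\bfE(A,\dff_A)$.

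For the converse, fix an isomorphism $h\colon\bfE(C,\dff_C)\xrightarrow{\cong}\bfE(A,\dff_A)$. By Remark \ref{homequal} strictness identifies $E^\beta_\alpha\cong G_pC$ whenever $\beta\smin\alpha=\{p\}$, and by excision (Lemma \ref{execiso}) the corresponding component of $h$ is a well-defined isomorphism $f''_p\colon G_pC\to G_pA$ of the boundaryless groups of homogeneous elements. I would then construct, by induction on $|\alpha|$, a family of chain isomorphisms $f_\alpha\colon F_\alpha C\to F_\alpha A$ that lift the homology maps $h^\alpha_\varnothing$ and are compatible with the inclusions $F_\alpha\hookrightarrow F_{\alpha'}$. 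For a join-irreducible $\alpha=\mathop{\downarrow}p$ one has $\alpha=(\alpha\smin\{p\})\cup\{p\}$ with $p$ maximal; the grading \eqref{Pgrdecomp} gives $F_\alpha C=F_{\alpha\smin\{p\}}C\oplus G_pC$ and strictness puts $\dff_C$ in the off-diagonal form of Lemma \ref{connhomchar12} with boundaryless quotient $G_pC$, so the extension is exactly the input of Theorem \ref{lem:main}. Its hypothesis \eqref{homcond12} is precisely the naturality square of $h$ against the connecting homomorphism $\sk=H(\lambda)$, and the theorem yields $f_\alpha=\bigl(\begin{smallmatrix} f_{\alpha\smin\{p\}} & \gamma\\ 0 & f''_p\end{smallmatrix}\bigr)$, an isomorphism because its diagonal blocks are. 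If $\alpha$ is not join-irreducible I would write $\alpha=\beta\cup\beta'$ with $\beta,\beta'\subsetneq\alpha$ proper down-sets, so that $F_\alpha C=F_\beta C+F_{\beta'}C$ and $F_\beta C\cap F_{\beta'}C=F_{\beta\cap\beta'}C$; the already-built $f_\beta$ and $f_{\beta'}$ agree on $F_{\beta\cap\beta'}C$ and Theorem \ref{lem:main22} glues them into a chain isomorphism $f_\alpha$ lifting $h^\alpha_\varnothing$ (an isomorphism by the five lemma applied to the exact rows of \eqref{diag7-1244}).

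The main obstacle will be \emph{consistency}, i.e.\ ensuring the resulting maps preserve every filtration piece rather than those along a single ascending chain. The delicate point is that the correction $\gamma\colon G_pC\to F_{\alpha\smin\{p\}}A$ must land in $\bigoplus_{q<p}G_qA$ and introduce no component in directions $q$ incomparable to $p$; this is guaranteed by Lemma \ref{splitdiff} (cf.\ Lemma \ref{idforincomp}), which shows that for parallel convex sets the differential, and therefore every lift built from it in \eqref{formoff}, splits off the incomparable directions. Together with the gluing of Theorem \ref{lem:main22} at joins, this makes the family $\{f_\alpha\}$ well-defined and order-respecting, so that $f:=f_{\sP}$ satisfies $f(F_\alpha C)\subset F_\alpha A$ for all $\alpha$. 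Running the identical construction on $h^{-1}$ produces a filtered chain inverse, whence by Remark \ref{inclusionprop} $f$ is an $\sO(\sP)$-filtered chain isomorphism, completing the argument.
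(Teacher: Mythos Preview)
Your proposal follows essentially the same strategy as the paper: induction over $\sO(\sP)$, invoking Theorem~\ref{lem:main} to extend along a join-irreducible and Theorem~\ref{lem:main22} to glue at joins, with the five lemma supplying the isomorphism property at each stage. The paper organizes the induction by adjoining one minimal element of $\sP\smin\alpha$ at a time rather than by $|\alpha|$, but this is a cosmetic difference.

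Two points need correction. First, your closing step is wrong: running the construction on $h^{-1}$ produces some filtered chain map $g\colon A\to C$, but there is no reason $g=f^{-1}$, since the lifts $\gamma$ in Theorem~\ref{lem:main} involve choices; so Remark~\ref{inclusionprop} cannot be invoked with that $g$. The correct argument---which the paper gives and which is already implicit in what you wrote---is that each $f_\alpha\colon F_\alpha C\to F_\alpha A$ is an isomorphism and the family is compatible with inclusions, hence $f(F_\alpha C)=F_\alpha A$ with \emph{equality} for all $\alpha$, and $f^{-1}$ is automatically filtered. Second, your diagnosis of the ``main obstacle'' misplaces the role of Lemma~\ref{splitdiff}. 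In the join-irreducible step $\alpha=\mathop{\downarrow}p$ the correction $\gamma$ already lands in $F_{\alpha\smin\{p\}}A=\bigoplus_{q<p}G_qA$ by construction, so incomparable directions never arise there. Lemma~\ref{splitdiff} is instead needed in the gluing step, to verify hypothesis~(i) of Theorem~\ref{lem:main22}: that the differentials on $F_\beta C$ and $F_{\beta'} C$ restrict consistently to $F_{\beta\cap\beta'}C$ and assemble to the differential on $F_{\beta\cup\beta'}C$ in the form~\eqref{newdiffs34}.
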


\begin{proof}
If $(C,\dff_C)$ and $(A,\dff_A)$ are $\sO(\sP)$-filtered chain isomorphic then the associated Cartan-Eilenberg systems are isomorphic which follows directly
from the definition of Cartan-Eilenberg system for a graded differential group. For the converse
we argue by induction using the lattice structure of $\sO(\sP)$. 

Let $q\in \sP$ be a minimal element and let $\alpha =\{q\}$. Since $\bfE(C,\dff_C)\cong \bfE(A,\dff_A)$ we have $F_\alpha C=G_q C$, $F_\alpha A=G_q A$, and a chain
isomorphism $f\colon G_q C\xrightarrow[]{\cong} G_q A$, with $f(F_\alpha C)=
F_\alpha A$,
which uses the fact that   $(C,\dff_C)$ and $(A,\dff_A)$ are finitely generated and strict, cf.\ Rmk.\ \ref{homequal}. Since the differentials on $G_qC$ and $G_qA$ vanish $H(f)=f$. This holds for all minimal elements $q\in \sP$, which concludes the first step
in the induction.
Let $\alpha\in\sO(\sP)$ and suppose $f'\colon F_\alpha C\to F_\alpha A$ is an $\sO(\sP)$-filtered chain isomorphism with
\begin{equation}
    \label{equalforiso}
f'(F_{\alpha'} C) = F_{\alpha'} A,\quad \forall \alpha'\subseteq \alpha.
\end{equation}
Let  $q\in \sP\smin \alpha$ be  a minimal element.
Define $\beta = \bigl\downarrow q$, which is a join-irreducible down-set in $\sO(\sP)$ with unique immediate predecessor $\beta^\dagger\subset \alpha$.
This implies that $\beta = \beta^\dagger \cup \{q\}$.
The next step is to construct a chain isomorphism $f \colon F_\beta C \xrightarrow[]{\cong}F_\beta A$, where $F_\beta C = F_{\beta^\dagger} C \oplus G_qC$ and $F_\beta A = F_{\beta^\dagger} A \oplus G_qA$, and which satisfies $f(F_{\beta} C) = F_{\beta} A$.
Since $\bfE(C,\dff_C)\cong \bfE(A,\dff_A)$ we have a chain
isomorphism $f''\colon G_q C\xrightarrow[]{\cong} G_q A$ and an isomorphism
$g\colon H(F_{\beta^\dagger} C\oplus G_qC) \to H(F_{\beta^\dagger} A\oplus G_qA)$ given by
the exact triangles:
\[
\begin{tikzcd}
             & H(F_{\beta^\dagger} C) \ar[equal]{d}           & H(F_{\beta^\dagger} C\oplus G_qC) \ar[equal]{d}           & G_qC \ar[equal]{d}           &                       &    \\
{} \arrow[r] & E^{\beta^\dagger}_\emptyset \arrow[d, "H(f')"'] \arrow[r]                & E^\beta_\emptyset \arrow[d, "g"] \arrow[r]                 & E^\beta_{\beta^\dagger} \arrow[d, "f''"] \arrow[r]                 & E^{\beta^\dagger}_\emptyset \arrow[r] \arrow[d] & {} \\
{} \arrow[r] & E'^{\beta^\dagger}_\emptyset \arrow[r] \ar[equal]{d} & E'^\beta_\emptyset \arrow[r] \ar[equal]{d} & E'^\beta_{\beta^\dagger} \arrow[r] \ar[equal]{d} & E'^{\beta^\dagger}_\emptyset \arrow[r]           & {} \\
             & H(F_{\beta^\dagger} A)                                          & H(F_{\beta^\dagger} A\oplus G_qA)                                         & G_qA                                          &                       &   
\end{tikzcd}
\]
Since $(C,\dff_C)$ and $(A,\dff_A)$ are finitely generated and strict we have 
$H(G_qC) =G_qC$ and $H(G_qA) = G_qA $. On the chain level this yields the following diagram:
\begin{equation}
    \label{ind5lem}
    \begin{tikzcd}
0 \arrow[r] & F_{\beta^\dagger} C \arrow[r] \arrow[d, "f'"'] & F_{\beta^\dagger} C\oplus G_qC \arrow[r] \arrow[d, dashed, "f"] & G_qC \arrow[r] \arrow[d, "f''"] & 0 \\
0 \arrow[r] & F_{\beta^\dagger} A \arrow[r]                 & F_{\beta^\dagger} A\oplus G_qA \arrow[r]                & G_qA \arrow[r]                & 0
\end{tikzcd}
\end{equation}
As before, since both $(C,\dff_C)$ and $(A,\dff_A)$ are finitely generated, strict and free  it follows that
 $G_q C$ and $G_q A$ are finitely generated, free, boundaryless abelian groups (and thus projective).
 The components $F_{\beta^\dagger} C$ and $F_{\beta^\dagger} A$ are free differential groups.
By Theorem \ref{lem:main} there exists a chain homomorphism $f$ that induces $g$. 
The chain homomorphism $f$ is a chain isomorphism via  the five lemma
since both $f'$ (by induction) and $f''$ are chain isomorphisms.  The isomorphism $f$ is filtered by construction, i.e.
$f(F_\beta C)\subset F_\beta A$. The above procedure yields a more precise statement, i.e. $\beta'\subsetneq \beta$ if and only if $\beta'\subset {\beta^\dagger}$, which implies, by the induction hypotheses, that $f(F_{\beta'}C)=f'(F_{\beta'}C)= F_{\beta'}A$ for all
$\beta'\subset \beta^\dagger\subset \alpha$.
By the form of $f$ given in \eqref{formoff}, and the fact that  $f''(G_qC)=G_qA$,  we have  
\[
\begin{aligned}
   f(F_{\beta} C) &=f\bigl(F_{\beta^\dagger} C\oplus G_qC\bigr) = \bigl[ f'(F_{\beta^\dagger}C) + \gamma(G_qC)\bigr]\oplus f''(G_qC) \\
   &= F_{\beta^\dagger}A \oplus G_qA = F_{\beta} A,
\end{aligned}
\]
since 
$\gamma(G_qC)\subset F_{\beta^\dagger}A$.

It remains to construct $f$ on $F_{\beta'}C$ 
for down-sets $\beta'\subset \alpha\cup\{q\}$, where $\alpha\cup\{q\}\in \sO(\sP)$. 
Since every down-set can be uniquely represented as an irredundant union of join-irreducible down-sets,  the down-set $\beta'$ is represented as
\begin{equation}
    \label{irrjoin}
    \beta' = \bigl\downarrow q \cup \bigl\downarrow p_{i_1} \cup \cdots \cup \bigl\downarrow p_{i_k}
    =\beta\cup \gamma, \quad p_{i_1},\cdots,p_{i_k}\in \alpha,\quad \beta=\bigl\downarrow q,
\end{equation}
and  $\gamma = \bigl\downarrow p_{i_1} \cup \cdots \cup \bigl\downarrow p_{i_k}\subset \alpha$.
For $\beta\cup \gamma$ and $\beta\cap \gamma$ we have the following diagram:
\[
\begin{tikzcd}[cramped, sep=small]
             & (\beta\cap \gamma)\cup (\beta\smin\gamma) \cup (\gamma\smin\beta) \arrow[ld] \arrow[rd] &              \\
(\beta\cap \gamma)\cup (\beta\smin\gamma)  \arrow[rd] &                         & (\beta\cap \gamma)\cup (\gamma\smin\beta)  \arrow[ld] \\
             & \beta\cap \gamma                       &             
\end{tikzcd}
\]
where $(\beta\cap \gamma)\cup (\beta\smin\gamma) \cup (\gamma\smin\beta)=\beta\cup \gamma = \beta'$.
Consider the decompositions:
\[
F_\beta C= F_{\beta\cap\gamma} C\oplus G_{\beta\smin\gamma} C,\quad
F_\gamma C = F_{\beta\cap\gamma} C\oplus G_{\gamma\smin\beta} C
\]
and 
\[
F_{\beta'} C= F_{\beta\cap\gamma} C\oplus \Bigl[ G_{\beta\smin\gamma} C \oplus G_{\gamma\smin\beta} C \Bigr] = F_\beta C + F_\gamma C.
\]
The same decomposition holds for $F_\beta A$, $F_\gamma A$ and $F_{\beta'} A$.
Since $\beta\smin\gamma$ and $\gamma\smin\beta$ are uncomparable convex sets Lemma \ref{splitdiff}, with
the triples $(\beta\cap\gamma,\beta,\beta\cup\gamma)$ and $(\beta\cap\gamma,\gamma,\beta\cup\gamma)$, implies that 
 the differential on $G_{\beta\smin\gamma} C \oplus G_{\gamma\smin\beta} C$ is a direct sum and thus
the differentials are of the form:
\begin{equation}
    \label{newdiffs12}
\dff_{F_\beta C} = \begin{pmatrix} \dff_{F_{\beta\cap \gamma}C}  & \mathrm{\Lambda} \\ 0 & \dff_{G_{\beta\smin \gamma}C} \end{pmatrix}, 
\quad \dff_{F_\gamma C}=\begin{pmatrix} \dff_{F_{\beta\cap \gamma}C}  & \mathrm{\Lambda'} \\ 0 & \dff_{G_{\gamma\smin \beta}C} \end{pmatrix}, 
\end{equation}
and
\begin{equation}
    \label{newdiffs34}
    \dff_{F_{\beta'} C} = \begin{pmatrix} \dff_{F_{\beta\cap \gamma}C}  & \mathrm{\Lambda}~~ & \mathrm{\Lambda'}\\ 0 & \dff_{G_{\beta\smin \gamma}C}~~ &0 \\ 0 & 0~~ & \dff_{G_{\gamma\smin \beta}C} \end{pmatrix},
\end{equation}
It is readily verified that with $C= F_{\beta} C$, $C'= F_\gamma C$, $A = F_{\beta} A$ and $A' = F_\gamma A$, and 
$C+C'=F_{\beta'}C$ and $A+A'=F_{\beta'}A$, Theorem \ref{lem:main22}(i) is satisfied for the differentials.
By construction the chain isomorphisms $f'=f|_{F_{\beta} C}$ and $f''=f|_{F_{\gamma} C}$ satisfy Theorem \ref{lem:main22}(ii) and therefore $f := f|_{F_\beta C+F_\gamma C}$
is a well-defined chain homomorphism on $F_{\beta'} C$ and 
$f(F_{\beta'}C)=F_{\beta'}A$.
Indeed, $f'(F_\beta C)=F_\beta A$ and $f''(F_\gamma C) = F_\gamma A$, and
$f(c+c') = f'(c)+f''(c')$ for $c\in F_\beta C$ and $c'\in F_\gamma C$.
This implies that 
\[
f(F_{\beta'}C) = f(F_\beta C+F_\gamma C) = F_\beta A+ F_\gamma A = F_{\beta'}A.
\]
and therefore 
the chain homomorphism is well-defined on $F_{\alpha\cup \downarrow q}C$ and is filtered, i.e. 
$f(F_{\alpha'} C)= F_{\alpha'}A$ for all $\alpha' \subset \bigl\downarrow q \cup\alpha$. 
Since $f(F_{\beta\cap \gamma} C)= F_{\beta\cap\gamma}A$, the homomorphism $f$ is of the form
\[
f'=f|_{F_\beta C}= \begin{pmatrix} f|_{F_{\beta\cap \gamma}C}  & \mathrm{\chi} \\ 0 & f|_{G_{\beta\smin \gamma}C} \end{pmatrix},\quad
f''=f|_{F_\gamma C}= \begin{pmatrix} f|_{F_{\beta\cap \gamma}C}  & \mathrm{\chi'} \\ 0 & f|_{G_{\gamma\smin \beta}C} \end{pmatrix}
\]
Since $f|_{F_\beta C}$ and $f|_{F_\gamma C}$ are isomorphisms  so are $f|_{G_{\beta\smin\gamma}C}$ and $f|_{G_{\gamma\smin\beta}C}$, and by construction $f(G_{\beta\smin\gamma}C)\subset G_{\beta\smin\gamma}A$ and
$f(G_{\gamma\smin\beta}C)\subset G_{\gamma\smin\beta}A$.
Since $\beta'\smin\beta = \gamma\smin\beta$ and $\beta'\smin\gamma = \beta\smin\gamma$ we have the exact triangles:
\begin{equation}
    \label{twoexacttr12}
\begin{tikzcd}[column sep=small]
E^\beta_\varnothing \arrow[rr, "i"] &                   & E^{\beta'}_\varnothing \arrow[ld, "j"] \\
                  & E^{\beta'}_\beta \cong E^\gamma_\beta \arrow[lu, "k"] &                  
\end{tikzcd}
\qquad\qquad
\begin{tikzcd}[column sep=small]
E^\gamma_\varnothing \arrow[rr, "i"] &                   & E^{\beta'}_\varnothing \arrow[ld, "j"] \\
                  & E^{\beta'}_\gamma \cong E^\beta_\gamma \arrow[lu, "k"] &                  
\end{tikzcd}
\end{equation}
From the isomorphism between Cartan-Eilenberg systems we obtain the commutative squares:
\[
\begin{tikzcd}[column sep=large, row sep=large]
E^\beta_\varnothing \arrow[r, "i"] \arrow[d, "H(f')"'] & E^{\beta'}_\varnothing \arrow[d, "g"] \\
E'^\beta_\varnothing \arrow[r, "i"]                 & E'^{\beta'}_\varnothing               
\end{tikzcd}
\quad\quad
\begin{tikzcd}[column sep=large, row sep=large]
E^\gamma_\varnothing \arrow[r, "i"] \arrow[d, "H(f'')"'] & E^{\beta'}_\varnothing \arrow[d, "g"] \\
E'^\gamma_\varnothing \arrow[r, "i"]                 & E'^{\beta'}_\varnothing               
\end{tikzcd}
\]
By Theorem \ref{lem:main22}(iii) $f$ lifts $g$ and since $f'$ and $f''$ are chain isomorphisms by construction the five lemma applied to the short exact sequences associated to the exact triangles in \eqref{twoexacttr12} imply that also $f$ is a chain isomorphism, i.e.
\[
    \begin{tikzcd}
0 \arrow[r] & F_{\beta} C \arrow[r] \arrow[d,"\cong", "f'"'] & F_{\beta} C + F_\gamma C \arrow[r] \arrow[d, dashed, "f"] & G_{\gamma\smin\beta} C \arrow[r] \arrow[d, "\cong"] & 0 \\
0 \arrow[r] & F_{\beta} A \arrow[r]                 & F_{\beta} A + F_\gamma A \arrow[r]                & G_{\gamma\smin\beta} A \arrow[r]                & 0
\end{tikzcd}
\]
and similarly for 
\[
    \begin{tikzcd}
0 \arrow[r] & F_{\gamma} C \arrow[r] \arrow[d,"\cong", "f''"'] & F_{\beta} C + F_\gamma C \arrow[r] \arrow[d, dashed, "f"] & G_{\beta\smin\gamma} C \arrow[r] \arrow[d, "\cong"] & 0 \\
0 \arrow[r] & F_{\gamma} A \arrow[r]                 & F_{\beta} A + F_\gamma A \arrow[r]                & G_{\beta\smin\gamma} A \arrow[r]                & 0
\end{tikzcd}
\]
which completes 
 the construction of $f$ on $F_{\beta'}C$ 
for down-sets $\beta'\subset \alpha\cup\{q\}$. This process terminates after finitely many steps.
\qed
\end{proof}

\begin{remark}
    If $h$ is a morphism between $\bfE$ and $\bfE'$ such that $h|_{E^\beta_\alpha}$ is an isomorphism for all $\beta\smin\alpha=\{p\}$, $p\in \sP$, then $h$ is a isomorphism on all $E$-terms $E^\beta_\alpha$, for all 
    $\alpha\subset \beta$. 
    This is a direct consequence of the five lemma.
\end{remark}

\section{Equivalence to Franzosa's module braids}
\label{franmodbr}
In \cite{fran}  a data structure is developed to relate Conley indices of Morse sets in a Morse representation, cf.\ \cite{lsa3}.
In this section we show that Franzosa's data structure is equivalent to a Cartan-Eilenberg system.

\subsection{Module braids}
\label{modbr1}
The starting point in \cite{fran} is a finite poset $(\sP,\le)$. 
We recall some notation from
 \cite{fran} to relate elements in $\sCo(\sP)$, the meet-semilattice of convex sets in $\sP$, and formulate these concepts in terms of lattice theory.
Two elements $\xi,\eta\in \sCo(\sP)$  are \emph{adjacent} if and only if there exists an ordered  triple $(\alpha,\beta,\gamma)$ such that
    $\xi=\beta\smin\alpha$ and $\eta = \gamma\smin\beta$. Then,  $\xi\cap\eta=0$ and the union satisfies $\xi\eta:=\xi\cup\eta = \gamma\smin\alpha$ which is a \emph{decomposition} of convex sets.
For every convex set $\xi\in \sCo(\sP)$ assign  a differential  group $(D_\xi,\dff_\xi)$ such that 
\[
\begin{tikzcd}
0 \arrow[r] & \displaystyle{D_\xi} \arrow[r, "i"] & \displaystyle{D_{\xi\eta}} \arrow[r, "j"] & \displaystyle{D_{\eta}} \arrow[r] & 0
\end{tikzcd},
\]
is exact.\footnote{In \cite{fran} the slightly more general assumption of weak exactness is assumed. 
The results in this section remain true under the assumption of weak exactness.
Weak exactness in \cite{fran} can be avoided by using Borel-Moore homology for example.}
If $\xi$ and $\eta$ are incomparable then $j'i=\id$ on $_\xi$ where $i\colon D_\xi \to D_{\xi\eta}$ and $j'\colon D_{\eta\xi} \to D_\xi$. 
Franzosa refers to the above structure as a `chain complex braid'.
The prime example of a chain complex braid is given by a $\sO(\sP)$-filtered differential group $(D,\dff_D)$ and is given in \eqref{firstexactseq13}.
As in Section \ref{relexsys} one proves that $G_{\beta\smin\alpha}D\cong G_{\beta'\smin\alpha'} D$ for $\beta\smin\alpha = \beta'\smin\alpha' =:\xi$ along with the other properties. 
The homology is denote by $E_\xi:= H(D_\xi,\dff_\xi)$ and satisfies the following properties:
\begin{definition}
\label{maindefnmodbr}
A  \emph{module braid} $\bfB$ over a meet-semilattice $\sCo(\sP)$ is a collection of abelian groups (modules) $E_\xi\in \sRmod$, indexed by $\xi\in \sCo(\sP)$, such that
\begin{enumerate}
    \item [(i)] for every pair of \emph{adjacent}
    $\xi,\eta\in \sCo(\sP)$ the triangles
\begin{equation}
    \label{braid111}
    \begin{tikzcd}[column sep=0.5cm]
E_{\xi} \arrow[rr, "i"] && E_{\xi\eta} \arrow[ld, "j"] \\
& E_{\eta} \arrow[lu, "k"] &
\end{tikzcd}
    \end{equation}
are exact;
    \item[(ii)] for \emph{incomparable} elements $\xi,\eta\in \sCo(\sP)$, cf.\ Rmk.\ \ref{convexpara}, 
    both triangles
    \begin{equation}
    \label{braid111again}
    \begin{tikzcd}[column sep=0.5cm]
E_{\xi} \arrow[rr, "i"] && E_{\xi\eta} \arrow[ld, "j"] \\
& E_{\eta} \arrow[lu, "k"] &
\end{tikzcd}
\qquad
\begin{tikzcd}[column sep=0.5cm]
E_{\eta} \arrow[rr, "i'"] && E_{\eta\xi} \arrow[ld, "j'"] \\
& E_{\xi} \arrow[lu, "k'"] &
\end{tikzcd}
    \end{equation}
are exact and
    $j'  i= \id$ on $E_\xi$ and $j i' = \id$ on $E_\eta$.
    \item[(iii)] for every triple of adjacent elements\footnote{A triple $\xi,\eta,\zeta$ of adjacent convex sets is equivalent to a  ordered quadruple $(\alpha,\beta,\gamma,\delta)$ such that
    $\xi=\beta\smin\alpha$, $\eta = \gamma\smin\beta$ and $\zeta = \delta\smin\gamma$. These convex sets are mutually disjoint and their union is $\delta\smin\alpha$.} $\xi,\eta,\zeta\in \sCo(\sP)$ the `braid diagram' 
\begin{equation}
    \label{braiddiag1234}
    \begin{tikzcd}[cramped, sep=small]
{} \arrow[d, bend right, shift right=3]                    &  & {} \arrow[lld] \arrow[rrd, dashed] &  & {} \arrow[d, bend left, shift left=3]           \\
E_\xi \arrow[rrd, "i"] \arrow[dd, "i"', bend right, shift right=3]     &  &                                            &  & E_\zeta \arrow[lld, "k"'] \arrow[dd, "k", dashed, bend left, shift left=2]     \\
                                                                   &  & E_{\xi\eta} \arrow[rrd, "j"] \arrow[lld, "i"']       &  &                                                                  \\
E_{\xi\eta\zeta} \arrow[rrd, "j"] \arrow[dd, "j"', bend right, shift right=3]     &  &                                            &  & E_\eta \arrow[lld, dashed, "i"'] \arrow[dd, "k", bend left, shift left=2]     \\
                                                                   &  & E_{\eta\zeta} \arrow[lld, dashed, "j"'] \arrow[rrd, "k"]       &  &                                                                  \\
E_\zeta \arrow[rrd, "k"] \arrow[dd, dashed, "k"', bend right, shift right=3]     &  &                                            &  & E_\xi \arrow[lld, "i"'] \arrow[dd, "i", bend left, shift left=2]     \\
                                                                   &  & E_{\xi\eta} \arrow[lld, "j"'] \arrow[rrd, "i"]       &  &                                                                  \\
E_\eta \arrow[d, bend right, shift right=4] \arrow[rrd, dashed] &  &                                            &  & E_{\xi\eta\zeta} \arrow[d, bend left, shift left=3] \arrow[lld] \\
{}                                                                 &  & {}                                         &  & {}                                                              
\end{tikzcd}
    \end{equation}
commutes. 
\end{enumerate}
\end{definition}

\begin{remark}
    \label{braid1}
    The terminology module braid is best explained since   the exact triangles in \eqref{braid111} correspond to the strands in the braid diagram in \eqref{braiddiag1234}. 
    In the spirit of triangulated categories the braid diagram can also be reshaped as a commutative diagram of exact triangles: 
\begin{equation}
\label{exact9yy}
\begin{tikzcd}[column sep=small]
                                                 &                                    & E_\xi \arrow[lldd, "i"', bend right] \arrow[rrdd, "i", bend left] &                                     &                                    \\
                                                 & E_\eta \arrow[ru, "k"] \arrow[rr, dashed, "i"'] &                                                               & E_{\eta\zeta} \arrow[lu, "k"'] \arrow[ld, dashed, "j"'] &                                    \\
E_{\xi\eta} \arrow[ru, "j"] \arrow[rrrr, "i"', bend right] &                                    & E_{\zeta} \arrow[ll, "k"] \arrow[lu, dashed, "k"']                            &                                     & E_{\xi\eta\zeta} \arrow[ll, "j"] \arrow[lu, "j"']
\end{tikzcd}
\end{equation}
The dashed `strand' corresponds to the middle  exact triangle in \eqref{exact9yy}, which makes the diagram commutative.
\end{remark}

\begin{lemma}
    \label{tooctodiag}
    Let $\bfE$ be a Cartan-Eilenberg system over $\sO(\sP)$, with $\sP$ a finite poset. Then, the  diagram 
    \begin{equation}
  \label{octadiag12}
  \vspace{-0.2cm}
\begin{tikzcd}[column sep=small]
                                                 &                                    & E^{\beta}_{\alpha} \arrow[lldd, "i"', bend right] \arrow[rrdd, "i", bend left] &                                     &                                    \\
                                                 & E^{\gamma}_{\beta} \arrow[ru, "k"] \arrow[rr, "i"',dashed] &                                                               & E^{\delta}_{\beta} \arrow[lu, "k"'] \arrow[ld, "j"',dashed] &                                    \\
E^{\gamma}_{\alpha} \arrow[ru, "j"] \arrow[rrrr, "i"', bend right] &                                    & E^{\delta}_{\gamma} \arrow[ll, "k"] \arrow[lu, "k"',dashed]                            &                                     & E^{\delta}_{\alpha} \arrow[ll, "j"] \arrow[lu, "j"']
\end{tikzcd}
\end{equation}
commutes for every ordered quadruple $(\alpha,\beta,\gamma,\delta)$ in $\sO(\sP)$.
\end{lemma}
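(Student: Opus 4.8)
The plan is to recognize the octagon \eqref{octadiag12} as the superposition of the exact triangles attached to the four ordered triples that the quadruple $(\alpha,\beta,\gamma,\delta)$ contains, and then to reduce its commutativity to the two defining features of a Cartan-Eilenberg system from Definition \ref{CEsys}: the functoriality of $\sE\colon \IIi\to\sRmod$ and the naturality of the connecting transformation $\sk\colon \sE\spi_2\Rightarrow\sE\spi_0$. No element chase is needed.

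First I would list the four triples $(\alpha,\beta,\gamma)$, $(\alpha,\beta,\delta)$, $(\alpha,\gamma,\delta)$ and $(\beta,\gamma,\delta)$ together with their exact triangles. One checks that the dashed strand $E^\gamma_\beta\xrightarrow{i}E^\delta_\beta\xrightarrow{j}E^\delta_\gamma\xrightarrow{k}E^\gamma_\beta$ is exactly the triangle of $(\beta,\gamma,\delta)$, while the three solid strands realize the triangles of the other three triples; this identifies every arrow in \eqref{octadiag12} with one of the maps $i$, $j$, $k$ of these four triangles.

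Next I would verify each elementary face of the octahedron. The faces built only from $i$- and $j$-arrows, such as the triangle $E^\beta_\alpha\xrightarrow{i}E^\gamma_\alpha\xrightarrow{i}E^\delta_\alpha$ against $E^\beta_\alpha\xrightarrow{i}E^\delta_\alpha$, and the square $E^\gamma_\alpha\xrightarrow{j}E^\gamma_\beta\xrightarrow{i}E^\delta_\beta$ against $E^\gamma_\alpha\xrightarrow{i}E^\delta_\alpha\xrightarrow{j}E^\delta_\beta$, commute because these maps are the images under the functor $\sE$ of morphisms in the arrow category $\IIi$: since between two objects of the arrow category of a poset there is at most one morphism, any two directed paths with common source and target represent the same morphism, so their $\sE$-images coincide. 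The three faces containing a connecting homomorphism are instances of the naturality square \eqref{unpackeddiag12}[right], applied to the morphisms of triples $(\alpha,\beta,\gamma)\to(\alpha,\beta,\delta)$, $(\alpha,\gamma,\delta)\to(\beta,\gamma,\delta)$ and $(\alpha,\beta,\delta)\to(\alpha,\gamma,\delta)$. Evaluating $\spi_0$ and $\spi_2$ on each of these morphisms turns the naturality square respectively into $k=k\circ i$, $j\circ k=k$ and $i\circ k=k\circ j$, which are precisely the three mixed relations read off from the diagram.

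The only real work is bookkeeping: matching each face of the octahedron to the correct functoriality identity or naturality square, and evaluating the functors $\spi_0,\spi_2$ on each morphism of triples so that the corresponding naturality square presents itself as the intended face. I expect this matching, rather than any genuine homological argument, to be the main obstacle, since the diagram is drawn in a compressed octahedral form. Once the four triples and the whiskered transformations $\sE\imath$, $\sE\jmath$ and $\sk$ are laid out, commutativity of \eqref{octadiag12} follows formally, and the same argument is valid in any abelian category, in particular for $R$-modules and for $\F$-vector spaces.
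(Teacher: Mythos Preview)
Your proposal is correct and follows essentially the same route as the paper: both arguments reduce the commutativity of \eqref{octadiag12} to the functoriality of $\sE$ on the arrow category $\IIi$ (for the faces built from $i$- and $j$-maps alone) together with the naturality square \eqref{unpackeddiag12}[right] for $\sk$ (for the faces involving a connecting homomorphism). The paper phrases this as recovering the dashed $(\beta,\gamma,\delta)$-triangle from compositions through the solid strands, but the content is identical.

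One small bookkeeping remark, since you flag bookkeeping as the main obstacle: the octahedron has exactly four commuting (non-exact) triangular faces, of which two are pure $i$/$j$ faces ($i\circ i=i$ on $E^\beta_\alpha\to E^\gamma_\alpha\to E^\delta_\alpha$ and $j\circ j=j$ on $E^\delta_\alpha\to E^\delta_\beta\to E^\delta_\gamma$) and two involve $k$ (the instances coming from $(\alpha,\beta,\gamma)\to(\alpha,\beta,\delta)$ and $(\alpha,\gamma,\delta)\to(\beta,\gamma,\delta)$). Your third listed naturality instance, from $(\alpha,\beta,\delta)\to(\alpha,\gamma,\delta)$, is a square through four vertices rather than a face of the octahedron; it is true but redundant once the four triangular faces are verified. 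Your general principle---functoriality for $i,j$, naturality for $k$---already covers all four faces, so the argument stands.
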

\begin{proof}
The commutative diagram follows by defining the dashed triangle by considering the compositions: $E^\gamma_\beta \xrightarrow[]{k}E^\beta_\alpha\xrightarrow[]{i}E^\delta_\alpha\xrightarrow[]{j} E^\delta_\beta$, $E^\delta_\beta\xrightarrow[]{k}E^\beta_\alpha\xrightarrow[]{i}E^\delta_\alpha\xrightarrow[]{j} E^\delta_\gamma$, and $E^\delta_\gamma\xrightarrow[]{k}E^\gamma_\alpha\xrightarrow[]{j}E^\gamma_\beta$. 
The first two compositions are equal to the homomorphisms
$i\colon E^\gamma_\beta \to E^\delta_\beta$ and $j\colon E^\delta_\beta\to E^\delta_\gamma$,
since nested ordered pairs correspond to unique homomorphisms.
The latter composition is the connecting homomorphism $k\colon E^\delta_\gamma\to E^\gamma_\beta$ due to the commutative squares for $k$ in \eqref{unpackeddiag12}[right]. This implies that the dashed triangle is  the exact triangle for  $(\beta,\gamma,\delta)$.
    \qed
\end{proof}

The  diagram in \eqref{octadiag12} is a reshaped version of the octahedral diagram in triangulated categories and is a structure present in Cartan-Eilenberg systems, cf.\ \cite[Lem.\ 4.8]{Matschke}. The octahedral diagram can also be used in the definition of a Cartan-Eilenberg system by removing the commutative squares in \eqref{unpackeddiag12}[right]
and adding some additional conditions.
This idea lies at the heart of proving that Franzosa's module braids correspond to excisive Cartan-Eilenberg systems.

\begin{theorem}
\label{equivEEC}
A  module braid $\bfB$  over a meet-semilattice $\sCo(\sP)$ for some finite poset $\sP$ is equivalent to
an excisive Cartan-Eilenberg system  $\bfE$ over $\sO(\sP)$.
\end{theorem}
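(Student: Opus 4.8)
The plan is to set up mutually inverse assignments between excisive Cartan-Eilenberg systems over $\sO(\sP)$ and module braids over $\sCo(\sP)$, using the dictionary that a convex set $\xi\in\sCo(\sP)$ is exactly a difference $\beta\smin\alpha$ of down-sets, and that by excisiveness (Lemma \ref{execiso}) the $E$-term $E^\beta_\alpha$ depends only on $\beta\smin\alpha$. On objects the correspondence is forced: $E_\xi\leftrightarrow E^\beta_\alpha$ whenever $\xi=\beta\smin\alpha$.

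First I would treat the direction from a Cartan-Eilenberg system $\bfE=(\IIi,\sE,\sk)$ to a braid. Put $E_\xi:=E^\beta_\alpha$ for any representation $\xi=\beta\smin\alpha$, well-defined up to the canonical excision isomorphisms by Lemma \ref{execiso}. Axiom (i) is immediate, since for adjacent $\xi=\beta\smin\alpha$ and $\eta=\gamma\smin\beta$ the exact triangle \eqref{braid111} is precisely the Cartan-Eilenberg triangle of the ordered triple $(\alpha,\beta,\gamma)$. Axiom (ii) for incomparable $\xi,\eta$ follows from Lemma \ref{idforincomp} applied to the two triples $(\alpha,\beta,\gamma)$ and $(\alpha,\beta',\gamma)$ with $\beta\smin\alpha=\gamma\smin\beta'=\xi$ and $\beta'\smin\alpha=\gamma\smin\beta=\eta$, which yields exactly $j'i=\id$ on $E_\xi$ and $ji'=\id$ on $E_\eta$ after the excision identifications. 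Axiom (iii) for a triple of adjacent elements $\xi,\eta,\zeta$, equivalently an ordered quadruple $(\alpha,\beta,\gamma,\delta)$, is the content of Lemma \ref{tooctodiag}: the octahedral diagram \eqref{octadiag12} is the reshaped braid diagram \eqref{exact9yy}.

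Next I would treat the opposite direction, from a module braid $\bfB$ to a Cartan-Eilenberg system. Set $E^\beta_\alpha:=E_{\beta\smin\alpha}$. The substantive task is to promote the braid, which supplies only the atomic maps $i,j,k$ between adjacent convex sets, to a genuine functor $\sE$ on the arrow category $\IIi$ together with a connecting natural transformation $\sk$. For a pair $(\alpha,\beta)\le(\alpha',\beta')$ I would define $\ell\colon E^\beta_\alpha\to E^{\beta'}_{\alpha'}$ through the factorization $E^\beta_\alpha \xrightarrow{i} E^{\beta'}_\alpha \xrightarrow{j} E^{\beta'}_{\alpha'}$, where $i$ is the braid map for the triple $(\alpha,\beta,\beta')$ and $j$ the braid map for the triple $(\alpha,\alpha',\beta')$; the inequalities $\alpha\subseteq\beta\subseteq\beta'$ and $\alpha\subseteq\alpha'\subseteq\beta'$ make both steps available. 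The connecting transformation $\sk$ is taken to be the family of braid maps $k$, and excisiveness holds by construction since $E^\beta_\alpha=E_{\beta\smin\alpha}$.

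The main obstacle is the verification in this second direction that $\ell$ is functorial and that the connecting squares of \eqref{unpackeddiag12}[right] commute, i.e.\ that the atomic maps assemble coherently rather than merely piecewise. This coherence is exactly what the braid commutativity axiom (iii) is engineered to supply: the octahedral relations encoded in \eqref{braiddiag1234} force any two composites built from $i$'s, $j$'s and $k$'s along different factorizations of the same jump to agree, yielding both associativity of $\sE$ on composable morphisms and naturality of $\sk$. Once this is in place I would close the argument by checking that the two assignments are mutually inverse: passing from a Cartan-Eilenberg system to its braid and back recovers the same $E$-terms and the same structure maps $i,j,k$ (tautologically on objects, and on morphisms because each $\ell$ is the forced composite of preserved atomic maps), and symmetrically in the other order. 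The same bookkeeping shows that morphisms of braids correspond to morphisms of Cartan-Eilenberg systems via \eqref{dia:exacttriangles}, which upgrades the object-level bijection to the asserted equivalence.
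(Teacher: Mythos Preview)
Your approach is essentially the same as the paper's: both directions use the same ingredients (Lemmas \ref{execiso}, \ref{idforincomp}, \ref{tooctodiag} for the forward direction; the definition $E^\beta_\alpha:=E_{\beta\smin\alpha}$ with $\ell$ built as a composite $j\circ i$ for the reverse), and you correctly identify the coherence of $\ell$ and naturality of $\sk$ as the crux, to be extracted from the braid axiom (iii). The paper carries this out by writing down two explicit octahedral diagrams for the quadruples $(\alpha,\beta,\gamma,\gamma')$ and $(\alpha,\beta,\beta',\gamma')$ and splicing them into the required commutative square for $k$, whereas you invoke (iii) more schematically; but the content is the same.

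One small gap to flag: your claim that ``excisiveness holds by construction since $E^\beta_\alpha=E_{\beta\smin\alpha}$'' is not quite enough. Excisiveness is a statement about the \emph{map} $\ell\colon E^\alpha_{\alpha\cap\beta}\to E^{\alpha\cup\beta}_\beta$, not just about equality of objects, so you must check that your defined $\ell=j\circ i$ is the identity in this case. This is exactly where braid axiom (ii) enters: for $\beta\smin\alpha=\beta'\smin\alpha'$ with $(\alpha,\beta)\le(\alpha',\beta')$, the convex sets $\xi=\beta\smin\alpha$ and $\eta=\alpha'\smin\alpha$ are incomparable, and (ii) gives $j'i=\id$. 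The paper makes this step explicit; you should too, since without it axiom (iii) alone does not force $\ell$ to be an isomorphism on excisive pairs.
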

\begin{proof}
    We start with the observation that an excisive Cartan-Eilenberg system we choose $E$-terms over $\sCo(\sP)$ by setting $E_\xi=E^\beta_\alpha\cong E^{\beta'}_{\alpha'}$, with $\xi=\beta\smin\alpha=\beta'\smin \alpha'$. The exact triangles for $\bfE$ yield the exact triangle for $\bfB$ in
     (i). Property (ii) follows from Lemma \ref{idforincomp} and the octahedral diagram (module braid diagram) is given by Lemma \ref{tooctodiag},
which establishes Property (iii). 
We conclude that an excisive Cartan-Eilenberg system $\sO(\sP)$ defines a module braid over $\sCo(\sP)$.

For the converse we want to show that (i)-(iii) in the definition of module braid define an excisive Cartan-Eilenberg system in the sense that the commutative squares in \eqref{unpackeddiag12}[right] exist.
Define $E^\beta_\alpha := E_\xi$ for all ordered  pairs $(\alpha,\beta)$ such that $\beta\smin \alpha = \xi$. In \eqref{braid111} choose  $\xi=\alpha$ and $\eta = \beta\smin \alpha$ for any the ordered pair $(\alpha,\beta)$, which establishes the associated exact triangles in \eqref{unpackeddiag12}[left], together with the maps $i$, $j$ and $k$.
 The next step is to define the maps 
 $\ell\colon E^\beta_\alpha \to E^{\beta'}_{\alpha'}$ and
 $\ell\colon E^\gamma_\beta \to E^{\gamma'}_{\beta'}$ in \eqref{unpackeddiag12}[right]
for ordered triples $(\alpha,\beta,\gamma)\le(\alpha',\beta',\gamma')$.
Consider the octahedral diagrams in \eqref{exact9yy} in the following two configurations:\footnote{The notation $\genfrac{}{}{0pt}{}{\alpha}{~}
\genfrac{}{}{0pt}{}{\undergroup{\subset}}{\eta}
\genfrac{}{}{0pt}{}{\beta}{ ~}$ indicates the convex set $\eta=\beta\smin\alpha$.}
\begin{equation}
    \label{commdiagconstr}
\begin{aligned}
&\genfrac{}{}{0pt}{}{\alpha}{~}
\genfrac{}{}{0pt}{}{\undergroup{\subset}}{\xi}
\genfrac{}{}{0pt}{}{\beta}{~}
\genfrac{}{}{0pt}{}{\undergroup{\subset}}{\eta}
\genfrac{}{}{0pt}{}{\gamma}{ ~}
\genfrac{}{}{0pt}{}{\undergroup{\subset}}{\zeta}
\genfrac{}{}{0pt}{}{\gamma'}{ ~} \\
&\genfrac{}{}{0pt}{}{\alpha}{ ~}
\genfrac{}{}{0pt}{}{\undergroup{\subset}}{\xi}
\genfrac{}{}{0pt}{}{\beta}{ ~}
\genfrac{}{}{0pt}{}{\undergroup{\subset}}{\eta'}
\genfrac{}{}{0pt}{}{\beta'}{ ~}
\genfrac{}{}{0pt}{}{\undergroup{\subset}}{\zeta'}
\genfrac{}{}{0pt}{}{\gamma'}{ ~}
\end{aligned}
\quad\quad
\begin{tikzcd}
E_\eta \arrow[r, "k_1"] \arrow[d, "i"'] & E_\xi \arrow[dd, "i'_1"] \\
E_{\eta\zeta}=E_{\eta'\zeta'} \arrow[ur, "k_2","k_2'"'] \arrow[d, "j'"'] & \\
E_{\zeta'} \arrow[r, "k'_3"] & E_{\xi\eta'}
\end{tikzcd}
\end{equation}
Diagram \eqref{exact9zz} below gives the octahedral diagrams for both configurations.
By using the first configuration Diagram \eqref{exact9zz} yields the upper part of Diagram \eqref{commdiagconstr}[right] and by using the second configuration in \eqref{exact9zz} we obtain the lower part of  Diagram \eqref{commdiagconstr}[right]. 
\begin{equation}
\label{exact9zz}
\begin{tikzcd}[column sep=small]
                                                 &                                    & E_\xi \arrow[lldd, "i_1"', bend right] \arrow[rrdd, "i_2", bend left] &                                     &                                    \\
                                                 & E_\eta \arrow[ru, "k_1"{near start}] \arrow[rr, dashed, "i"'] &                                                               & E_{\eta\zeta} \arrow[lu, "k_2"'{near start}] \arrow[ld, dashed, "j"'] &                                    \\
E_{\xi\eta} \arrow[ru, "j_1"{near end}] \arrow[rrrr, "i_3"', bend right] &                                    & E_{\zeta} \arrow[ll, "k_3"] \arrow[lu, dashed, "k"']                            &                                     & E_{\xi\eta\zeta} \arrow[ll, "j_3"] \arrow[lu, "j_2"'{near end}]
\end{tikzcd}
\!\!\!
\begin{tikzcd}[column sep=small]
                                                 &                                    & E_{\xi} \arrow[lldd, "i'_1"', bend right] \arrow[rrdd, "i'_2", bend left] &                                     &                                    \\
                                                 & E_{\eta'}
                                                 \arrow[ru, "k'_1"{near start}] \arrow[rr, dashed, "i'"'] &                                                               & E_{\eta'\zeta'} \arrow[lu, "k'_2"'{near start}] \arrow[ld, dashed, "j'"'] &                                    \\
E_{\xi\eta'} \arrow[ru, "j'_1"{near end}] \arrow[rrrr, "i'_3"', bend right] &                                    & E_{\zeta'} \arrow[ll, "k'_3"] \arrow[lu, dashed, "k'"']                            &                                     & E_{\xi\eta'\zeta'} \arrow[ll, "j'_3"] \arrow[lu, "j'_2"'{near end}]
\end{tikzcd}
\end{equation}
Since the upper and lower diagrams in \eqref{commdiagconstr}[right]
commute the composition is also a commutative diagram.
This construction defines a homomorphism $\ell\colon E^\gamma_\beta \to E^{\gamma'}_{\beta'}$ for every
 ordered pair $(\beta,\gamma)\le(\beta',\gamma')$.
The composition properties for the maps $\ell$ follows by building adjacent diagrams from \eqref{commdiagconstr}[right], which
yields $E^\beta_\alpha \xrightarrow[]{\ell}E^{\beta'}_{\alpha'}\xrightarrow[]{\ell'}E^{\beta''}_{\alpha''}$.
Diagram \eqref{commdiagconstr}[right] together with composition yields  the following commutative diagrams:
\[
\begin{tikzcd}[column sep=1.4cm, row sep=1.0cm]
E^\gamma_\beta \arrow[d, "j'i"'] \arrow[r, "k_1"] & E^\beta_\alpha \arrow[r, "\id"] \arrow[d, "i'_1"] & E^\beta_\alpha \arrow[d, "\ell"]   & E^{\gamma'}_{\beta'} \arrow[d, "\id"'] \arrow[r, "k'_3"] \arrow[rd, "k'_3"] & E^{\beta'}_\alpha \arrow[d, "\ell'"] \\
E^{\gamma'}_{\beta'} \arrow[r, "k'_3"]                 & E^{\beta'}_{\alpha} \arrow[r, "\ell'"]                & E^{\beta'}_{\alpha'}                  & E^{\gamma'}_{\beta'} \arrow[r, "k'_3 j"]                                 & E^{\beta'}_{\alpha'}               
\end{tikzcd}
\]
which establishes  Diagram \eqref{unpackeddiag12}[right]. 
It remains to show that 
$\ell\colon E^{\beta}_{\alpha}\to E^{\beta'}_{\alpha'}$ is the identity map whenever $\beta\smin \alpha = \beta'\smin \alpha'$
and $(\alpha,\beta)\le (\alpha',\beta')$.
Then, the ordered triples $(\alpha,\beta,\beta')$ and $(\alpha,\alpha',\beta')$ satisfy the conditions in  Lemma \ref{idforincomp} which provides the exact triangles (ii) of the definition of module braid
\begin{equation}
    \label{exact3def}
    \begin{tikzcd}[column sep=0.5cm]
E_{\xi} \arrow[rr, "i"] && E_{\xi\eta} \arrow[ld, "j"] \\
& E_{\eta} \arrow[lu, "k"] &
\end{tikzcd}
\qquad
\begin{tikzcd}[column sep=0.5cm]
E_{\eta} \arrow[rr, "i'"] && E_{\eta\xi} \arrow[ld, "j'"] \\
& E_{\xi} \arrow[lu, "k'"] &
\end{tikzcd}
\end{equation}
with $\xi=\beta\smin\alpha=\beta'\smin\alpha'$ and
$\eta = \alpha'\smin\alpha=\beta'\smin\beta$ incomparable convex sets, and $\xi\eta=\beta'\smin\alpha$.
The homomorphism $\ell\colon E^{\beta}_{\alpha}\to E^{\beta'}_{\alpha'}$ is given by the composition
$E^\beta_\alpha \xrightarrow[]{i} E^{\beta'}_\alpha \xrightarrow[]{j'} E^{\beta'}_{\alpha'}$.
By Definition \ref{maindefnmodbr}(ii)  we have that $j'i=\id$
which proves that $\ell=\id$.
The $E$-terms constructed in this proof satisfy the hypotheses of a Cartan-Eilenberg system with the isomorphisms of excisive pairs being the identity isomorphism, i.e. $E^{\alpha\cup\beta}_\alpha=E^\beta_{\alpha\cap\beta}$
for all $\alpha,\beta\in \sO(\sP)$.
\qed
\end{proof}

\subsection{Connection matrices}
\label{connmat1}
In \cite{fran} a powerful representation result is established for module braids that are generated by a chain complex braid. 
Due to Theorem \ref{equivEEC} we can reformulate this result for Cartan-Eilenberg systems generated by an $\sO(\sP)$-filtered differential group.
Let $(D,\dff_D)$ be an $\sO(\sP)$-filtered differential group and let $\bfE(D,\dff_D)$ be the associated Cartan-Eilenberg system.
We say that the latter is free and finitely generated if the groups $E_\alpha^\beta$ are free and finitely generated for each pair $(\alpha,\beta)$ with $\beta\smin\alpha=\{p\}$ for all $p\in \sP$.
Define a free, finitely generated $\sP$-graded abelian group $A$ with the decomposition
\begin{equation}
    \label{deriveddiffgr}
A= \bigoplus_{p\in \sP} G_p A,\quad G_p A\cong E_\alpha^\beta,~~~\beta\smin\alpha=\{p\}.
\end{equation}
An appropriate differential is constructed by the following theorem:
\begin{theorem}[cf.\ \cite{fran}, Thm.\ 4.8]
    \label{mainresfran1}
    Let $(D,\dff_D)$ be an $\sO(\sP)$-filtered differential group with the property that
     the associated Cartan-Eilenberg system   $\bfE(D,\dff_D)$ is free  and finitely generated.
    Let $A$ be defined as in \eqref{deriveddiffgr}. Then, there exists an $\sO(\sP)$-filtered differential
    $\dff_A\colon A\to A$ such that 
    \[
    \bfE(A,\dff_A) \cong \bfE(D,\dff_D).
    \]
    In particular,
     there exists
    an $\sO(\sP)$-filtered quasi-isomorphism\footnote{A quasi-isomorphism is a chain homomorphism that is an isomorphism on homology.} $f\colon (A,\dff_A)\to (D,\dff_D)$,
    such that the isomorphisms $H(f)\colon E_\alpha^\beta(A,\dff_A) \to E_\alpha^\beta(D,\dff_D)$, for all $\alpha\subset \beta$, define an isomorphism of Cartan-Eilenberg systems. 
\end{theorem}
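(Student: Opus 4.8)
The plan is to prove both assertions at once by a single induction over the lattice $\sO(\sP)$, running parallel to the proof of Theorem \ref{thm:equivalence}, but with $(D,\dff_D)$ now fixed while the differential $\dff_A$ and the chain map $f$ are constructed simultaneously. Since $\bfE(A,\dff_A)$ is to be excisive and both systems agree on atoms by the very definition \eqref{deriveddiffgr} of $A$, the remark closing Section \ref{strictrepres} (matching on all pairs with $\beta\smin\alpha=\{p\}$ forces matching on every $E$-term) reduces the whole statement to an atom-by-atom construction glued along the lattice. First I would set up the base case: for a minimal $q\in\sP$ and $\alpha=\{q\}$ one has $F_\alpha A=G_qA\cong E^{\{q\}}_\varnothing=H(F_{\{q\}}D)$, and choosing a cycle representative in $F_{\{q\}}D$ for each basis element of the free group $G_qA$ defines a quasi-isomorphism $f\colon G_qA\to F_{\{q\}}D$, while $\dff_A$ vanishes on $G_qA$ by strictness.

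Assume now that $\dff_A$ and an $\sO(\sP)$-filtered chain map $f\colon F_\alpha A\to F_\alpha D$ inducing the Cartan-Eilenberg isomorphism on all sub-pairs have been built. Pick $q\in\sP\smin\alpha$ minimal, set $\beta=\,\downarrow q=\beta^\dagger\cup\{q\}$ with $\beta^\dagger\subset\alpha$, so that $F_\beta A=F_{\beta^\dagger}A\oplus G_qA$. The core step is to define the new block $\Lambda_q:=\dff_A|_{G_qA}\colon G_qA\to F_{\beta^\dagger}A$ and to extend $f$ over $G_qA$. On homology I transport the connecting homomorphism $k_D\colon E^\beta_{\beta^\dagger}(D)\to E^{\beta^\dagger}_\varnothing(D)$ through the fixed isomorphism $G_qA\cong E^\beta_{\beta^\dagger}(D)$ of \eqref{deriveddiffgr} and the inverse of the inductive isomorphism $H(f)\colon H(F_{\beta^\dagger}A)\xrightarrow{\cong}H(F_{\beta^\dagger}D)$, obtaining $\kappa_q\colon G_qA\to H(F_{\beta^\dagger}A)$. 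Since $G_qA$ is free I lift $\kappa_q$ to a map $\Lambda_q\colon G_qA\to Z(F_{\beta^\dagger}A)$ by selecting a cycle representative for each basis element. Then $\dff_A^2=0$ holds because $\Lambda_q$ lands in cycles (cf.\ Lemma \ref{connhomchar12}), and by construction the connecting homomorphism of $\bfE(A,\dff_A)$ on the triple $(\varnothing,\beta^\dagger,\beta)$ is $\kappa_q$, matching $k_D$.

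To extend $f$, I pick for each basis element $e_i$ of $G_qA$ a representative $\bar d_i\in F_\beta D$ of its prescribed class in $E^\beta_{\beta^\dagger}(D)$, so that $\dff_D\bar d_i\in F_{\beta^\dagger}D$ represents $k_D$ of that class. Commutativity of the connecting square \eqref{dia:exacttriangles} — which both $\dff_A$ and $f$ are engineered to satisfy — yields $[\dff_D\bar d_i]=[f(\Lambda_q e_i)]$ in $H(F_{\beta^\dagger}D)$, whence $\dff_D\bar d_i-f(\Lambda_q e_i)=\dff_D w_i$ for some $w_i\in F_{\beta^\dagger}D$; setting $f(e_i):=\bar d_i-w_i$ produces a filtered chain map extending $f$ and inducing the prescribed atom isomorphism, and the five lemma upgrades $H(f)$ to an isomorphism on $E^\beta_\varnothing$. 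Finally I would assemble $f$ over an arbitrary down-set $\beta'=\beta\cup\gamma$ with $\gamma\subset\alpha$ exactly as in Theorem \ref{thm:equivalence}: the filtration being a lattice homomorphism supplies the short exact sequences $0\to F_{\beta\cap\gamma}\to F_\beta\times F_\gamma\to F_{\beta'}\to 0$ on both the $A$- and $D$-sides, the differential splits over the incomparable convex sets $\beta\smin\gamma$ and $\gamma\smin\beta$ by Lemma \ref{splitdiff}, and Theorem \ref{lem:main22} produces the compatible $f$ on $F_{\beta'}A$ lifting the induced homology map.

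Once every atom has been processed, $\dff_A$ is a fully defined block-lower-triangular differential, $f\colon(A,\dff_A)\to(D,\dff_D)$ is an $\sO(\sP)$-filtered chain map that is an isomorphism on every atom $E$-term and hence, by excisiveness (Lemma \ref{execiso}) and the propagation remark, on every $E^\beta_\alpha$; in particular $f$ is a quasi-isomorphism and $H(f)$ is the required isomorphism of Cartan-Eilenberg systems. I expect the main obstacle to be precisely the simultaneous consistency of the two independent choices — the cycle representatives defining $\Lambda_q$ and the representative-plus-correction defining $f$ — which is resolved only because the commutativity of the connecting-homomorphism square guarantees the existence of the correcting boundary $w_i$. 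The secondary difficulty is confirming that the purely atom-level data genuinely reconstructs the whole braid/octahedral structure, for which excisiveness and the five lemma are the essential tools.
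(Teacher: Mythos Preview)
The paper does not supply a proof of this theorem: it is stated as a reformulation of Franzosa's result and attributed to \cite[Thm.\ 4.8]{fran} without further argument. There is therefore no ``paper's own proof'' to compare against.

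That said, your proposal is correct and is essentially Franzosa's original inductive construction, recast in the Cartan--Eilenberg/filtered--differential--group language of the present paper and deliberately made parallel to the proof of Theorem~\ref{thm:equivalence}. The three moves --- lifting the transported connecting homomorphism $\kappa_q$ to a cycle-valued $\Lambda_q$ (so that $\dff_A^2=0$ via Lemma~\ref{connhomchar12}), correcting the chosen relative-cycle representatives $\bar d_i$ by a boundary $w_i\in F_{\beta^\dagger}D$ to force $\dff_D f=f\dff_A$, and gluing over $\beta'=\beta\cup\gamma$ via Theorem~\ref{lem:main22} and Lemma~\ref{splitdiff} --- are exactly the right ones. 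One small point worth making explicit: your appeal to the remark closing Section~\ref{strictrepres} requires that $H(f)$ already be a morphism of Cartan--Eilenberg systems, not merely a collection of maps on atoms. This is automatic once $f$ is an $\sO(\sP)$-filtered chain map, by naturality of the long exact sequence in \eqref{firstexactseq13}, but it is the reason the reduction to atoms is legitimate and deserves a sentence.
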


In \cite{fran} the strict, $\sP$-graded differential group $(A,\dff_A)$ is called a \emph{connection matrix} for $\bfE(D,\dff_D)$.
Note that $\dff_A$ restricted to the groups $G_pA$ is the zero map 
and therefore $H(G_pA)=G_pA$.
The differential may be regarded as a strict, upper-triangular matrix on $A$. In dynamical systems theory the non-trivial entries of $\dff_A$ contain information about heterclinic orbits between Morse sets.
In \cite{fran} the notion of connection matrix is defined in a slightly more general context. If we translate this to Cartan-Eilenberg systems we obtain:
\begin{definition}[\cite{fran}, Defn.\ 3.6.B]
    \label{connmat2}
    Let $\bfE$ be a Cartan-Eilenberg system over $\sO(\sP)$. A strict, $\sP$-graded differential group $(A,\dff_A)$ as defined \eqref{deriveddiffgr}, such that
    $\bfE(A,\dff_A) \cong \bfE$ is called a connection matrix for $\bfE$.
\end{definition}

\begin{remark}
    The above theorem is a translation of the more general result in \cite[Thm.\ 4.8]{fran} to the case of Cartan-Eilenberg systems generated by filtered differential groups. Existence of connection matrices is only established in the setting of \cite[Thm.\ 4.8]{fran}, i.e. for Cartan-Eilenberg systems $\bfE(D,\dff_D)$ in which case a connection matrix is quasi-isomorphic to $(D,\dff_D)$.
\end{remark}

\section{Discussion}
\label{discussion}
The main result of this paper in Theorem~\ref{thm:equivalence} has various implications for the theory of connection matrices as used in dynamical systems theory. In this section we discuss the applications and resolutions of the conjectures by Robbin and Salamon, as well as the implication for the transition matrices by Franzosa and Mischaikow, cf.\ \cite{atm}.

\subsection{The conjectures by J. Robbin and D. Salamon}
\label{robsal}
In their seminal paper on dynamical systems and connection matrices, cf.\ \cite{robbin:salamon2}, Robbin and Salamon formulate a number of conjectures concerning non-uniqueness of connection matrices.
Theorem~\ref{thm:equivalence} is posed as Conjecture 8.5 in \cite{robbin:salamon2}
in the setting of flows. Theorem \ref{thm:equivalence} extends the original conjecture by replacing field coefficients by ring coefficients ($R$ is a principal ideal domain). 
Theorem \ref{thm:equivalence}  resolves non-uniqueness issues in Franzosa's connection matrix theory (see \cite[Section 6.3]{fran}): all connection matrices are $\sO(\sP)$-filtered chain isomorphic, i.e.
all  connection matrices according to Definition \ref{connmat2} are conjugated via $\sO(\sP)$-filtered chain isomorphisms, which 
 generalizes the main result in \cite[Theorem 3.5]{atm}.

In \cite[Thm.\ 8.1]{robbin:salamon2}  it is proven  
that an $\sO(\sP)$-filtered differential vector space $(D,\dff_D)$  
 is $\sO(\sP)$-filtered chain  equivalent to a strict, $\sP$-graded differential vector space $(A,\dff_A)$, such that
 $\bfE(D,\dff_D)\cong \bfE(A,\dff_A)$.
In this setting our main result solves the following conjecture:
\begin{corollary}[cf.\ \cite{robbin:salamon2}, Conj.\ 7.4]
\label{conj74}
Let $(D,\dff_D)$ and $(D',\dff_{D'})$ be $\sO(\sP)$-filtered differential vector spaces  over a field $\F$.  Then, $\bfE(D,\dff_D)\cong \bfE(D',\dff_{D'})$ if and only if $(D,\dff_D)$ and $(D',\dff_{D'})$  are $\sO(\sP)$-filtered chain equivalent, i.e. $(D,\dff_D) \simeq(D',\dff_{D'})$. 
\end{corollary}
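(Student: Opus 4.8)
The plan is to prove the two implications separately: the ``only if'' direction $(D,\dff_D)\simeq(D',\dff_{D'})\Rightarrow \bfE(D,\dff_D)\cong\bfE(D',\dff_{D'})$ follows from the functoriality of the assignment $(D,\dff_D)\mapsto\bfE(D,\dff_D)$ and its invariance under filtered chain homotopy, while the ``if'' direction $\bfE(D,\dff_D)\cong\bfE(D',\dff_{D'})\Rightarrow (D,\dff_D)\simeq(D',\dff_{D'})$ is reduced to Theorem~\ref{equivKandF} (Theorem~B) via the representation result \cite[Thm.\ 8.1]{robbin:salamon2}. The substantive work has therefore already been carried out in Theorems~\ref{thm:equivalence} and \ref{equivKandF}; the corollary itself is formal.

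For the easy direction I would first record that an $\sO(\sP)$-filtered chain homomorphism $f\colon D\to D'$ induces a morphism of Cartan-Eilenberg systems. Since $f$ satisfies $f(F_\alpha D)\subset F_\alpha D'$ for all $\alpha$, it descends to chain maps $F_\beta D/F_\alpha D\to F_\beta D'/F_\alpha D'$ and hence to homomorphisms $E^\beta_\alpha\to E'^\beta_\alpha$ on the associated homology; by naturality of the long exact sequence attached to \eqref{firstexactseq13} these commute with $i$, $j$ and the connecting map $k$, which is precisely the data of a morphism in the sense of \eqref{dia:exacttriangles}. Next I would observe that an $\sO(\sP)$-filtered chain homotopy $h$, being filtered, descends to each quotient $F_\beta D/F_\alpha D$, so filtered-chain-homotopic maps induce \emph{equal} morphisms of Cartan-Eilenberg systems. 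Applying this to a filtered chain equivalence $f\colon D\to D'$ with filtered homotopy inverse $f'$ and homotopies $f'f\sim\id_D$, $ff'\sim\id_{D'}$ produces mutually inverse morphisms between $\bfE(D,\dff_D)$ and $\bfE(D',\dff_{D'})$, i.e. an isomorphism of Cartan-Eilenberg systems.

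For the hard direction, suppose $\bfE(D,\dff_D)\cong\bfE(D',\dff_{D'})$. By \cite[Thm.\ 8.1]{robbin:salamon2} there is a strict, $\sP$-graded differential vector space $(A,\dff_A)$ which is $\sO(\sP)$-filtered chain equivalent to $(D,\dff_D)$ and satisfies $\bfE(A,\dff_A)\cong\bfE(D,\dff_D)$. Composing with the given isomorphism yields $\bfE(A,\dff_A)\cong\bfE(D',\dff_{D'})$, so Theorem~\ref{equivKandF} applied to the reference space $(D',\dff_{D'})$ and the strict graded model $(A,\dff_A)$ gives $(A,\dff_A)\simeq(D',\dff_{D'})$. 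Transitivity of $\sO(\sP)$-filtered chain equivalence—which follows from the compatibility of filtered chain homotopies with pre- and post-composition by filtered chain maps—then yields $(D,\dff_D)\simeq(A,\dff_A)\simeq(D',\dff_{D'})$, as required.

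The hard part has already been absorbed into Theorem~\ref{equivKandF}, which itself rests on the main equivalence Theorem~\ref{thm:equivalence}. The only point demanding care in assembling the corollary is that both \cite[Thm.\ 8.1]{robbin:salamon2} and Theorem~\ref{equivKandF} be invoked in their full, \emph{finite-dimensionality-free} form, so that the statement holds for arbitrary $\sO(\sP)$-filtered differential vector spaces rather than only for finite-dimensional ones; the functoriality and homotopy-invariance feeding the easy direction are entirely routine.
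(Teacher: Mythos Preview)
Your argument is circular within the paper's logical structure: you reduce the hard direction of Corollary~\ref{conj74} to Theorem~\ref{equivKandF}, but the paper's proof of Theorem~\ref{equivKandF} consists of a one-line appeal to Corollary~\ref{conj74}. So as written you are assuming what you want to prove.

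The fix is easy and recovers exactly the paper's argument. Instead of invoking Theorem~\ref{equivKandF} as a black box, unwind it: apply \cite[Thm.\ 8.1]{robbin:salamon2} (or \cite{hms}) \emph{twice}, once to $(D,\dff_D)$ and once to $(D',\dff_{D'})$, producing strict $\sP$-graded models $(A,\dff_A)\simeq(D,\dff_D)$ and $(C,\dff_C)\simeq(D',\dff_{D'})$. Then $\bfE(A,\dff_A)\cong\bfE(D,\dff_D)\cong\bfE(D',\dff_{D'})\cong\bfE(C,\dff_C)$, and Theorem~\ref{thm:equivalence} applied directly to the two strict models gives an $\sO(\sP)$-filtered chain \emph{isomorphism} $(A,\dff_A)\cong(C,\dff_C)$. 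Chaining $(D,\dff_D)\simeq(A,\dff_A)\cong(C,\dff_C)\simeq(D',\dff_{D'})$ finishes. This is precisely the paper's route; Theorem~\ref{equivKandF} then follows as a corollary rather than an input. Your treatment of the easy direction is fine and matches the paper's one-line dismissal of it.
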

\begin{proof}
If $(D,\dff_D)$ and $(D',\dff_{D'})$  are $\sO(\sP)$-chain equivalent then the equivalence $\bfE(D,\dff_D)\cong \bfE(D',\dff_{D'})$ is immediate.
For vector spaces
the existence theorems in
\cite{robbin:salamon2,hms} show that there exists strict, 
$\sP$-graded differential vector spaces $(A,\dff_A)$ and $(C,\dff_C)$, and chain equivalences 
  $(A,\dff_A)\simeq (D,\dff_D)$ and $(C,\dff_C)\simeq (D',\dff_{D'})$, which implies that
$\bfE(D,\dff_D)\cong \bfE(A,\dff_A)$
and $\bfE(D',\dff_{D'})\cong \bfE(C,\dff_C)$. 
By assumption,
$\bfE(D,\dff_D)\cong \bfE(D',\dff_{D'})$ and thus
 $\bfE(A,\dff_A) \cong \bfE(C,\dff_C)$.  Consequently,  $(A,\dff_A) \cong (C,\dff_C)$ are $\sO(\sP)$-chain isomorphic via Theorem~\ref{thm:equivalence},  and therefore, $(D,\dff_D)\simeq (A,\dff_A) \cong (C,\dff_C)\simeq (D',\dff_{D'})$, 
which proves $(D,\dff_D)\simeq (D',\dff_{D'})$, i.e. they are $\sO(\sP)$-chain equivalent.
\qed
\end{proof}
\begin{remark}
    Conjecture 8.4 in \cite{robbin:salamon2} is false in the setting of connection matrices since in the case of ring coefficients the homologies $E^\beta_\alpha$, $\beta\smin\alpha=\{p\}$ are not necessarily free, or projective, cf.\ \cite[Exam.\ 6.3]{fran}. Therefore a connection matrix is not an option. 
    This problem is a subject of further study.
\end{remark}

\subsection{Connection matrix theories for vector spaces}
\label{connvectsp}
In \cite{hms} the theory of connection matrices for field coefficients is treated in the setting of homotopy categories without using homology.
For an $\sO(\sP)$-filtered differential vector space $(D,\dff_D)$, or equivalently, a $\sP$-graded differential vector space consider the decomposition
\[
D=\bigoplus_{p\in \sP} G_p D,\quad \alpha \mapsto F_\alpha D= \bigoplus_{p\in \alpha} G_p D,\quad\alpha\in \sO(\sP).
\]
In \cite[Defn.\ 4.23]{hms}, a strict, $\sP$-graded differential vector space $(A,\dff_A)$ is a connection matrix for $(D,\dff_D)$ if $(A,\dff_A)$ is $\sO(\sP)$-filtered chain equivalent to $(D,\dff_D)$. 
An algorithm is proven in   \cite{hms} to construct a $\sP$-graded differential vector space $(A,\dff_A)$ given a
finite dimensional $\sO(\sP)$-filtered differential vector space $(D,\dff_D)$.
This procedure is reminiscent of the inductive existence result in \cite[Thm.\ 8.1]{robbin:salamon2}.
In \cite[Prop.\ 4.27]{hms} it is proven that connection matrices defined in the sense of
\cite[Defn.\ 4.23]{hms} are $\sO(\sP)$-filtered chain isomorphic, which covers a special case of Theorem \ref{thm:equivalence}.

In Section \ref{franmodbr} we have summarized the connection matrix theory by Franzosa. Consider the $\sO(\sP)$-filtered differential vector space $(D,\dff_D)$.
In Franzosa's theory a strict, $\sP$-graded differential vector space $(A,\dff_A)$ is a connection matrix for an $\sO(\sP)$-filtered differential vector space $(D,\dff_D)$ if 
 $\bfE(A,\dff_A)\cong \bfE(D,\dff_D)$, cf.\ Defn.\ \ref{connmat2}.
 Existence of a connection matrix is given by \cite[Thm.\ 4.8]{fran}.
 The latter is not assumed to be a chain equivalence. Note that Franzosa's notion of connection matrix not even requires a map at the chain level.
 The following result concerns all possible connection matrices according to Definition \ref{connmat2}.

\begin{theorem}
    \label{equivKandF}
    Let $(D,\dff_D)$ be an $\sO(\sP)$-filtered differential vector space.
    Then, every strict, $\sP$-graded differential vector space $(A,\dff_A)$ which satisfies
    \[
    \bfE(A,\dff_A)\cong \bfE(D,\dff_D),
    \]
    is $\sO(\sP)$-filtered chain equivalent to $(D,\dff_D)$, i.e. $(A,\dff_A)\simeq (D,\dff_D)$.
\end{theorem}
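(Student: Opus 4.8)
The plan is to reduce the statement to Theorem~\ref{thm:equivalence}, exactly as in the proof of Corollary~\ref{conj74}, but with the given $(A,\dff_A)$ on one side and a conveniently manufactured connection matrix on the other. The essential obstruction to applying Theorem~\ref{thm:equivalence} directly is that it compares two \emph{strict, $\sP$-graded} differential vector spaces, whereas $(D,\dff_D)$ is merely $\sO(\sP)$-filtered; so the first task is to replace $(D,\dff_D)$ by a strict graded model without changing its Cartan--Eilenberg system up to isomorphism.

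First I would invoke the existence result \cite[Thm.\ 8.1]{robbin:salamon2}, recalled just above Corollary~\ref{conj74}, to produce a strict, $\sP$-graded differential vector space $(C,\dff_C)$ together with an $\sO(\sP)$-filtered chain equivalence $(C,\dff_C)\simeq (D,\dff_D)$. Passing to Cartan--Eilenberg systems, a filtered chain equivalence induces an isomorphism $\bfE(C,\dff_C)\cong \bfE(D,\dff_D)$, by the easy (forward) direction of Theorem~\ref{thm:equivalence}. Combining this with the hypothesis $\bfE(A,\dff_A)\cong \bfE(D,\dff_D)$ gives $\bfE(A,\dff_A)\cong \bfE(C,\dff_C)$.

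Now both $(A,\dff_A)$ and $(C,\dff_C)$ are strict, $\sP$-graded differential vector spaces; working over a field, freeness is automatic, and by Remark~\ref{homequal} their homogeneous pieces satisfy $G_pA\cong E^\beta_\alpha\cong G_pC$ for $\beta\smin\alpha=\{p\}$, so both are finitely generated precisely when the $E$-terms are finite dimensional. Under that hypothesis Theorem~\ref{thm:equivalence} applies and yields an $\sO(\sP)$-filtered chain isomorphism $(A,\dff_A)\cong (C,\dff_C)$. Since a filtered chain isomorphism is in particular a filtered chain equivalence, transitivity of $\simeq$ gives $(A,\dff_A)\cong (C,\dff_C)\simeq (D,\dff_D)$, hence $(A,\dff_A)\simeq (D,\dff_D)$, which is the assertion.

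The step I expect to require the most care is the verification that the hypotheses of Theorem~\ref{thm:equivalence} really hold: everything else is a short logical chain. As stated, Theorem~\ref{thm:equivalence} is formulated for \emph{finitely generated} groups, so the argument above literally covers the finite dimensional case. I would point out, however, that over a field the finiteness in the auxiliary Lemma~\ref{lem:kernels} is inessential, because every subspace is complemented (the subgroups $M,M'$ can be taken to be the subspaces $K,K'$ themselves, with no appeal to Smith normal form); this suggests Theorem~\ref{thm:equivalence} extends verbatim to $\F$-vector spaces of arbitrary dimension, which is what removes the finite dimensionality restriction and is consistent with the remark that the Robbin--Salamon and Franzosa notions coincide in general. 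A secondary point to confirm is that \cite[Thm.\ 8.1]{robbin:salamon2} genuinely delivers a chain \emph{equivalence} rather than only a quasi-isomorphism as in Theorem~\ref{mainresfran1}; over a field this is exactly the strengthening that distinguishes the vector space theory, and it is what makes the final transitivity legitimate.
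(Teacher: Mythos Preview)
Your proposal is correct and follows essentially the same approach as the paper: the paper's proof simply observes that $(A,\dff_A)$, being $\sP$-graded, is in particular $\sO(\sP)$-filtered, and then invokes Corollary~\ref{conj74} directly to conclude $(A,\dff_A)\simeq (D,\dff_D)$. Your argument is the same logic with the proof of Corollary~\ref{conj74} unrolled (and slightly streamlined, since only one strict model $(C,\dff_C)$ is needed when $(A,\dff_A)$ is already strict); your remarks on finite generation and the field case are reasonable side observations but not part of the paper's proof.
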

\begin{proof}
   The $\sP$-graded differential vector space $(A,\dff_A)$ is also $\sO(\sP)$-filtered.
The $\sO(\sP)$-filtered differential vector spaces $(D,\dff_D)$ and $(A,\dff_A)$ satisfy
$\bfE(D,\dff_D) \cong \bfE(A,\dff_A)$. By Corollary \ref{conj74} this implies that 
$(D,\dff_D)$ and $(A,\dff_A)$ are $\sO(\sP)$-filtered chain equivalent, which proves the theorem.
    \qed
\end{proof}

As a consequence of Theorem \ref{equivKandF} the connection matrix theories of \cite{fran}, \cite{robbin:salamon2} and \cite{hms} in the setting of (finite dimensional) differential vector spaces are equivalent. The advantage of the latter theory is that the existence theory is based on an effective algorithm for producing connection matrices.

\subsection{Morse-Smale gradings and unique connection matrices}
\label{MSunique}
Suppose $(C,\dff_C)$ is a chain complex, in which case the chain homomorphisms are given by $f_k\colon C_k\to C_k$, i.e. $f_k(F_\alpha C_k)\subset F_\alpha C_k$, for all $\alpha\in \sO(\sP)$ and for all $k$, cf.\ Ex.\ \ref{chaincomplexex1} and Rmk.\ \ref{chaincomplexadd}.
For simplicity of exposition we consider
$\sP$-graded chain complexes in the category of $\Z_2$-vector spaces, where both gradings satisfy a natural order-preserving property.
\begin{definition}
    \label{MScc}
    A strict, $\sP$-graded chain complex $(C,\dff_C)$ is referred to as an \emph{algebraic Morse-Smale graded chain complex}
    if there exists a  map $\mu\colon \sP \to \Z$ such that
    \begin{enumerate}
        \item [(i)] $p<q$ implies $\mu(p)<\mu(q)$;
        \item[(ii)] $G_p C_k= \begin{cases}
     \Z_2 & \text{if }  k=\mu(p); \\
     0 & \text{otherwise}.
\end{cases}$
    \end{enumerate}
The pair $(\sP,\mu)$ is called a \emph{Morse-Smale grading} 
 for $(C,\dff_C)$. 
\end{definition}

By definition algebraic Morse-Smale graded chain complexes are finitely generated and $C= \Z_2^{|\sP|}$, where $|\sP|$ is the number of elements in $\sP$. As a consequence of   Condition (i)  we have that
\begin{equation}
    \label{consmu}
    \mu(p)=\mu(q),~~ p\neq q\quad \implies \quad p~\Vert~ q.
\end{equation}
The Morse-Smale grading restricts the differentials on $C$. 
\begin{theorem}
    \label{mainuniquethm}
    Let $(C,\dff_C)$ and $(C,\dff'_C)$ be   algebraic Morse-Smale graded chain complexes with Morse-Smale grading $(\sP,\mu)$ for both chain complexes.
    Then,  $\dff_C=\dff'_C$ if and only if $\bfE(C,\dff_C)\cong \bfE(C,\dff'_C)$.
\end{theorem}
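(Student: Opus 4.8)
The forward implication is trivial: if $\dff_C=\dff'_C$ then the two Cartan-Eilenberg systems are literally identical, hence isomorphic. The plan for the converse is to first manufacture a filtered chain isomorphism by invoking the main theorem, and then to exploit the rigidity of the Morse-Smale grading over $\Z_2$ to force that isomorphism to be the identity, from which $\dff_C=\dff'_C$ follows at once.

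First I would set up the reduction. Both $(C,\dff_C)$ and $(C,\dff'_C)$ are free, finitely generated, strict, $\sP$-graded differential groups: each $G_pC\cong\Z_2$ is free, and strictness is built into Definition~\ref{MScc}. Assuming $\bfE(C,\dff_C)\cong\bfE(C,\dff'_C)$, Theorem~\ref{thm:equivalence} then yields an $\sO(\sP)$-filtered chain isomorphism $f\colon(C,\dff_C)\to(C,\dff'_C)$. Since we are in the chain-complex setting (cf.\ Ex.~\ref{chaincomplexex1} and Rmk.~\ref{chaincomplexadd}), this $f$ is homogeneous of degree $0$ with respect to the integer grading, so $f=\bigoplus_k f_k$ with $f_k\colon C_k\to C_k$; and being $\sO(\sP)$-filtered it satisfies $f(F_\alpha C)\subset F_\alpha C$ for every $\alpha\in\sO(\sP)$.

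The key step is then an elementary rigidity argument. Fix $p\in\sP$ and let $e_p$ be the generator of $G_pC=\Z_2$, which lives in chain degree $\mu(p)$ by Definition~\ref{MScc}(ii). On one hand $e_p\in F_{\downarrow p}C$, so the filtered property gives $f(e_p)\in F_{\downarrow p}C=\bigoplus_{q\le p}G_qC$. On the other hand $f$ preserves chain degree, so $f(e_p)\in C_{\mu(p)}=\bigoplus_{\mu(q)=\mu(p)}G_qC$. Intersecting, $f(e_p)$ is a $\Z_2$-combination of generators $e_q$ with $q\le p$ and $\mu(q)=\mu(p)$; but Definition~\ref{MScc}(i) forces any such $q$ to equal $p$, since $q<p$ would give $\mu(q)<\mu(p)$. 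Hence $f(e_p)=\lambda_p e_p$ with $\lambda_p\in\Z_2$, and as $f$ is an isomorphism $\lambda_p\neq 0$, i.e.\ $\lambda_p=1$. As $p$ was arbitrary, $f=\id_C$. Finally, the chain-map identity $\dff'_C f=f\dff_C$ with $f=\id_C$ collapses to $\dff'_C=\dff_C$, completing the proof.

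The only substantial input here is Theorem~\ref{thm:equivalence}; everything after it is the observation that degreewise homogeneity, the filtration constraint, the strict monotonicity of $\mu$, and the one-dimensionality of each $G_pC$ over $\Z_2$ together leave no room for a nontrivial filtered automorphism. The one point that genuinely needs care, and which I would flag as the main obstacle, is ensuring that the isomorphism furnished by Theorem~\ref{thm:equivalence} really respects the integer grading of the chain complex so that the degree argument applies; this is exactly what the chain-complex conventions of Rmk.~\ref{chaincomplexadd} guarantee.
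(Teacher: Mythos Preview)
Your proof is correct and follows essentially the same line as the paper's own argument: apply Theorem~\ref{thm:equivalence} to obtain an $\sO(\sP)$-filtered chain isomorphism $f$, then use the Morse-Smale rigidity (degree preservation plus filtration plus strict monotonicity of $\mu$ plus $G_pC\cong\Z_2$) to force $f=\id$, whence $\dff_C=\dff'_C$. The paper phrases the rigidity step in matrix-entry language ($f_k^{pq}=0$ for $p\neq q$ via~\eqref{consmu}, and $f_k^{pp}=\id$ since the only automorphism of $\Z_2$ is the identity), but this is exactly your element-by-element argument; your explicit flag about needing the isomorphism to respect the integer grading matches the paper's reliance on Remark~\ref{chaincomplexadd}.
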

\begin{proof}
    By assumption both chain complexes have the same Morse-Smale grading with associated choice of basis.
     Theorem \ref{thm:equivalence} yields the existence of an $\sO(\sP)$-filtered chain automorphism $f\colon C\to C$ given by $f_k\colon C_k\to C_k$, which are $\sO(\sP)$-filtered for all $k$. By the $\sP$-grading each $f_k$ may be regarded as matrix with entries
     $f_k^{pq}\colon G_q C_k\to G_p C_k$, for all $\mu(p)=\mu(q)=k$.
     The filtering condition on $f_k$ translates to: $f_k^{pq}\neq 0$ implies $p\le q$.
     Consequently, by \eqref{consmu}, and thus Definition \ref{MScc}(i),  $f_k^{pq}=0$ for all
     $p\neq q$ and $f_k^{pp}$ are automorphisms. By Definition \ref{MScc}(ii) the factors are given by $G_p C_k=\Z_2$ and therefore $f_k^{pp}=\id$ for all $p$ with $\mu(p)=k$ since the only group automorphism on $\Z_2$ is the identify map. We conclude that $f=\id$ and thus $\dff_C=\dff'_C$.
      \qed
\end{proof}

The term Morse-Smale graded chain complex is motivated by Morse-Smale flows in dynamical systems. In \cite{Reineck} J. Reineck proves that connection matrices for Morse-Smale flows without periodic orbits are unique. Via Conley index theory such flows yield a Morse-Smale graded chain complex where the groups $G_pC_k$ record the critical points of index $k$, cf.\ \cite{fran2}, \cite{Sal1}. Theorem \ref{mainuniquethm} provides an algebraic proof of Reineck's result.

\begin{remark}
The proof of Theorem \ref{mainuniquethm} can be repeated for chain complexes of $R$-modules. For example for chain complexes of abelian groups ($\Z$-modules) the result stays the same except
for the group automorphisms of $\Z$. The only options are $\pm \id$.
 This implies that the entries in $\dff_C$ and $\dff'_C$ may defer by a $\pm$ sign. This is exactly the choice of the orientation of the basis for the non-trivial entries of $\dff_C$.
 We conclude that the entries of the differentials  $\dff_C$ and $\dff'_C$ are the same up to conjugacy, which is a statement of uniqueness.
\end{remark}

\begin{remark}
    Theorem \ref{mainuniquethm} can be proved in a more general context when $(C,\dff_C)$ is an algebraic Morse-Smale graded chain complex and $(A,\dff_A)$ is a  free, finitely generated, strict $\sP$-graded chain complex of $R$-modules. Under the condition that $\bfE(C,\dff_C)\cong \bfE(C,\dff'_C)$, the modules $G_pC_k$ and $G_p A_k$ are isomorphic and 
    and the  entries in $\dff_C$ and $\dff_A$ are related by conjugation. This is the general statement that there exists only one strict differential to represent $\bfE$ in the Morse-Smale setting.
\end{remark}

\begin{remark}
The condition that the $\sO(\sP)$-filtered chain isomorphisms are also homogeneous with respect to the chain complex grading is crucial for Theorem \ref{mainuniquethm}.
If the latter is not satisfied, i.e. $f$  is $\sO(\sP)$-filtered, but necessarily homogeneous,
may yield multiple similar differentials.
\end{remark}

\begin{acknowledgements}
The first author was partially supported on NSF and NWO GROW grant nr.\ 040.15.044/3192, corr.\ nr.\ 2018/SGW00579061.
\end{acknowledgements}

\bibliographystyle{spmpsci}      
\bibliography{references}   

\end{sloppypar}
\end{document}